\documentclass{amsart}
\usepackage{xcolor}
\usepackage{amsfonts,mathrsfs,amssymb,amsthm,mathptm}
\usepackage{amsmath,amscd}
\usepackage{amsmath}
\usepackage{mathptm,pslatex}
\usepackage{multirow}
\usepackage{color}
\usepackage[all]{xy}
\usepackage{url}
\usepackage{cite}
\usepackage{exscale}
\usepackage{relsize}
\usepackage{bbm}
\usepackage{enumerate}
\usepackage{paralist}
\usepackage{bm}
\usepackage{hyperref}
\hypersetup{colorlinks=true,citecolor=red,linkcolor=blue,CJKbookmarks=true}
\usepackage{amsrefs}
\usepackage{tikz-cd}
\usepackage{graphicx}
\usepackage{pstricks}
\usepackage{mathtools}
\xyoption{all}

\usepackage{paralist}
\setlength{\pltopsep}{3pt}

\newtheorem{Thm}{Theorem}[section]
\newtheorem{Def}[Thm]{Definition}
\newtheorem{Lem}[Thm]{Lemma}
\newtheorem{Cor}[Thm]{Corollary}
\newtheorem{Prop}[Thm]{Proposition}
\newtheorem{Eg}[Thm]{Example}
\newtheorem{Rem}[Thm]{Remark}

\newcommand{\Hom}{{\rm{Hom}}}

\newcommand{\Ext}{{\rm{Ext}}}
\newcommand{\End}{{\rm{End}}}
\newcommand{\modn}{{\rm{mod}}}

\newcommand{\rad}{{\rm{rad~}}}
\newcommand{\soc}{{\rm{soc}}}
\renewcommand{\top}{{\rm{top}}}

\newcommand{\proj}{{\rm{proj }}}

\newcommand{\gldim}{{\rm{gldim}}}
\newcommand{\domdim}{{\rm{domdim}}}
\newcommand{\add}{{\rm{add}}}

\newcommand{\Dyck}{{\rm{Dyck}}}
\newcommand{\thick}{{\rm{thick}}}
\newcommand{\cone}{{\rm{cone}}}
\newcommand{\os}{{\rm{os}}}

\newcommand{\projdim}{{\rm{proj.dim}}}

\numberwithin{equation}{section}
\allowdisplaybreaks[4]

\begin{document}
\title{Replicated algebras derived equivalent to higher Auslander algebras of type $\mathbb{A}$}
\author{Wei Xing}
\address{Uppsala University, Uppsala 75106, Sweden}
\curraddr{}
\email{wei.xing@math.uu.se}
\thanks{}

\subjclass[2010]{Primary ; Secondary }

\date{}
\dedicatory{}

\renewcommand{\thefootnote}{\alph{footnote}}
\setcounter{footnote}{-1} \footnote{Keywords:
Higher Auslander Algebra; $d$-Cluster Tilting Subcategory ; Derived equivalence.}

\begin{abstract}

We show that every higher Auslander algebra $A_{n+1}^d$ of type $\mathbb{A}$ such that $\gcd(n,d)=1$ is derived equivalent to a certain replicated algebra $B=B_0^{(n+d)}$. 
Moreover $\gldim B = nd$ and $B$ admits an $nd$-cluster tilting subcategory consisting of all direct sums of projective modules and injective modules.
We introduce a class of algebras called  $2$-subhomogeneous $m$-representation finite to characterize the homological properties of $B$ and give a method to obtain derived equivalences between fractionally Calabi-Yau algebras and $2$-subhomogeneous algebras using certain tilting complexes.
    
\end{abstract}

\maketitle

\tableofcontents


\section{Introduction}

Auslander-Reiten theory is an essential tool to study representation theory of finite dimensional algebras from the perspective of homological algebra. 
A higher version of Auslander-Reiten theory was introduced bu Iyama \cite{Iya07a, Iya07b} which is connected to algberaic geometry \cite{IW11,IW13,IW14,HIMO23}, combinatorics \cite{OT12} and symplectic geometry \cite{DJL21}. It is also a crucial ingredient to prove the Donovan-Weymss conjecture \cite{JKM22}.   
In higher Auslander-Reiten theory, the object of study is some category $\mathcal{A}$,
usually the module category of a finite dimensional algebra or its bounded derived category, 
equipped with a $d$-cluster tilting subcategory $\mathcal{M} \subseteq \mathcal{A}$,
possibly with some additional properties.
Depending on different settings, $d$-cluster tilting subcategories give rise to higher notions in homological algebra.
For instance, if $\mathcal{A}$ is abelian and $\mathcal{M}$ is $d$-cluster tilting,
then $\mathcal{M}$ is a $d$-abelian category in the sense of Jasso \cite{Jas16}.
If $\mathcal{A}$ is triangulated and $\mathcal{M}$ is $d\mathbb{Z}$-cluster tilting  which means $\mathcal{M}\subseteq \mathcal{A}$ is a $d$-cluster tilting subcategory with the additional property that $\mathcal{M}$ is closed under the $d$-fold suspension functor, then $\mathcal{M}$ is $(d+2)$-angulated in the sense of Geiss-Keller-Oppermann \cite{GKO13}.

Let $A$ be a finite dimensional algebra and denote by $\modn A$ the category of finitely generated right $A$-modules. $A$ is called $d$-representation finite if $\gldim A = d$ and there exists a $d$-cluster tilting subcategory $\mathcal{M}$ in $\modn A$. In this case, $\mathcal{M}$ is unique and canonically induces a $d\mathbb{Z}$-cluster tilting subcategory $\mathcal{U}$ in $\mathcal{D}^b(A)$. More precisely, 
\begin{equation*}
    \mathcal{U} = \add \{M[di]\mid M\in \mathcal{M} \text{ and } i\in \mathbb{Z}\}.
\end{equation*}

Such algebras are extensively studied by many authors \cite{Iya11, IO11, IO13, HI11a, HI11b}. 
It was shown in \cite{HI11a} that a $d$-representation finite algebra $A$ is twisted fractionally Calabi-Yau. 
Moreover, a finite dimensional algebra $A$ is twisted $\frac{d(\ell-1)}{\ell}$-Calabi-Yau with $\gldim A\leq d$ if and only if $A$ is $\ell$-homogeneous $d$-representation finite. Here $\ell$-homogeneous means that $\tau_d^{-(\ell-1)}P\in \add DA$ for all indecomposable projective module $P$ where $\tau_d$ denotes the $d$-Auslander-Reiten translation.
We introduce a subclass of $d$-reprensentation finite algebras, which are called $2$-subhomogeneous. They satisfy the condition that $\tau_d^{-}P\in \add DA$ for all indecomposable projective non-injective modules $P$.

\begin{Def}(Definition \ref{def_2_subhomoge_d_rf})
    Let $A$ be a finite-dimensional $k$-algebra. We call $A$ $2$-subhomogeneous $d$-representation finite if $\gldim A = d$ and $A\oplus DA$ is a $d$-cluster tilting module.
\end{Def}

Such algebras appear in the classification of $d$-representation finite acyclic Nakayama algebras. Indeed, all of them are $2$-subhomogeneous \cite{Vas19}. A similar result holds for acyclic higher Nakayama algebras \cite{Xin25}. 
In this paper, we give a general  construction of $2$-subhomogeneous $d$-representation finite algebras. 
More specifically, such an algebra is obtained as the endomorphism algebra of a certain tilting complex in the bounded derived category $\mathcal{D}^b(A)$ where $A$ is a fractionally Calabi-Yau algebra. 

\begin{Thm}(Theorem \ref{thm_construction_minimal_dRF_alg}) \label{thm_main_construction}
    Let $A$ be a finite dimensional $k$-algebra such that $\gldim A < \infty$ and $A$ is $\frac{d}{a+1}$-Calabi-Yau. 
    Suppose $X\in \mathcal{D}^b(A)$ such that $T=\bigoplus_{i=0}^{a-1}\nu^iX$ is a tilting complex.
    Denote by $B=\End_{D^b(A)}(T)$ the endomorphism algebra of $T$.
    Then the following statements hold. 
    \begin{itemize}
        \item [(i)] There is a triangle equivalence $F: \mathcal{D}^b(A)\xrightarrow[]{\sim} \mathcal{D}^b(B)$ which restricts to an equivalence between 
        $d\mathbb{Z}$-cluster tilting subcategories $F: \mathcal{U}_d(T)\xrightarrow[]{\sim}\mathcal{U}_d(B)$.
        \item [(ii)] $B$ is $2$-subhomogeneous $d$-representation finite.
    \end{itemize}
\end{Thm}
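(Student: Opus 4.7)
The plan is to invoke Rickard's theorem to produce the derived equivalence $F\colon \mathcal{D}^b(A) \xrightarrow{\sim} \mathcal{D}^b(B)$, use its commutation with the Serre functor $\nu$ and the shift $[d]$ to transport $\mathcal{U}_d(T)$ onto $\mathcal{U}_d(B)$, and then close the Nakayama orbit using the Calabi--Yau identity $\nu^{a+1} \cong [d]$ to read off the structure of $B \oplus DB$.

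For part (i), Rickard's theorem applied to the tilting complex $T$ produces the triangle equivalence $F = \Hom_{\mathcal{D}^b(A)}(T,-)$ with $F(T) = B$. Since $\gldim A < \infty$, tilting theory gives $\gldim B < \infty$, so both derived categories admit $\nu$ as a Serre functor. Any triangle equivalence between derived categories of algebras of finite global dimension commutes with the Serre functor (and trivially with shifts), so $F$ commutes with $\nu$ and with $[d]$, and therefore restricts to an equivalence $F\colon \mathcal{U}_d(T) \xrightarrow{\sim} \mathcal{U}_d(B)$ between the respective $\nu$- and $[d]$-closed additive hulls.

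For part (ii), set $Y_i := F(\nu^i X) = \nu^i F(X)$. The decomposition $B = F(T) = \bigoplus_{i=0}^{a-1} Y_i$ identifies each $Y_i$, $0 \le i \le a-1$, as an indecomposable projective summand of $B$; applying $\nu$ gives $DB \cong \nu B = \bigoplus_{i=1}^{a} Y_i$, identifying each $Y_i$, $1 \le i \le a$, as an indecomposable injective $B$-module, with $Y_1, \ldots, Y_{a-1}$ projective-injective. The relation $\nu^{a+1} \cong [d]$ gives $Y_{a+1+k} \cong Y_k[d]$ for all $k$, so the orbit collapses to
\[
\mathcal{U}_d(B) = \add\{Y_i[dj] \mid 0 \le i \le a,\ j \in \mathbb{Z}\} = \add\{(B \oplus DB)[dj] \mid j \in \mathbb{Z}\}.
\]
Intersecting with $\modn B$ in cohomological degree $0$ yields $\add(B \oplus DB)$; the fact that this intersection is a $d$-cluster tilting subcategory of $\modn B$ (so that $B \oplus DB$ is a $d$-cluster tilting module and $\gldim B = d$) then follows from the standard correspondence between $d\mathbb{Z}$-cluster tilting subcategories of the derived category and $d$-cluster tilting subcategories of the module category.

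The step I expect to be most delicate is verifying that $\mathcal{U}_d(B)$ is actually a $d\mathbb{Z}$-cluster tilting subcategory of $\mathcal{D}^b(B)$, together with the bound $\gldim B \le d$ that makes the above correspondence apply. Part (i) alone only produces an equivalence of additive subcategories, so the $d\mathbb{Z}$-cluster-tilting axioms ($\Hom$-vanishing in degrees $1, \ldots, d-1$ and generation of $\mathcal{D}^b(B)$) must be checked on the $B$-side; they should fall out from the explicit description above together with $\nu^{a+1} \cong [d]$, which forces all nonzero $\Hom$'s inside $\mathcal{U}_d(B)$ to concentrate in degrees divisible by $d$.
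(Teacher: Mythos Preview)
Your overall architecture matches the paper: Rickard's theorem gives $F$, uniqueness of the Serre functor makes $F$ commute with $\nu$, and the identity $\nu^{a+1}\cong[d]$ collapses the $\nu$-orbit of $X$ to $\{Y_0,\ldots,Y_a\}$ up to $[d]$-shifts, with $B=\bigoplus_{i=0}^{a-1}Y_i$ and $DB=\bigoplus_{i=1}^{a}Y_i$. That part is fine (minor point: $X$ need not be indecomposable, so do not call the $Y_i$ indecomposable).

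The genuine gap is the bound $\gldim B\le d$, which you flag as ``delicate'' but never establish. This bound is not a cosmetic detail: it is the hypothesis in Theorem~\ref{thm_Ud(T)_d-ct} that makes $\mathcal{U}_d(T)\cong\mathcal{U}_d(B)$ a $d$-cluster tilting subcategory in the first place, and it is equally a hypothesis for the ``standard correspondence'' you invoke to pass from $\mathcal{U}_d(B)$ back to a $d$-cluster tilting module in $\modn B$. As written, your argument is circular: you want to extract $\gldim B=d$ from the correspondence, but the correspondence already assumes it. Note also that ``generation of $\mathcal{D}^b(B)$'' is not one of the cluster tilting axioms; the hard axiom is \emph{maximality} with respect to $\Ext^{1,\ldots,d-1}$-orthogonality, and this is exactly what Iyama's theorem supplies once $\gldim B\le d$ is known.

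The paper closes this gap by a direct computation: writing $T=T'\oplus X$ with $T'=\bigoplus_{i=1}^{a-1}\nu^iX$ and using $\nu_d^{-1}\cong\nu^a$, one has $\nu T\cong T'\oplus\nu_d^{-1}X$, and then
\[
\Hom_{\mathcal{D}^b(B)}(DB,B[i])\cong\Hom_{\mathcal{D}^b(A)}(T',T[i])\oplus D\Hom_{\mathcal{D}^b(A)}(T,X[d-i]),
\]
which vanishes for $i\neq 0,d$ since $T$ is tilting; hence $\projdim_B DB=d$ and $\gldim B=d$. With this in hand the paper finishes (ii) via Proposition~\ref{Prop_condition_2subhomo_d_RF}, checking that every projective non-injective lies in $\add FX$ and that $\nu_d^{-1}FX\cong F(\nu^aX)\in\add DB$, rather than by intersecting $\mathcal{U}_d(B)$ with $\modn B$. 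Your route through the intersection would also work, but only after the same global dimension computation.
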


We show that our construction applies to all $(d-1)$-Auslander algebras $A_{n+1}^d$ of type $\mathbb{A}$ such that $n$ and $d$ are coprime. 
In this case, we manage to find a tilting complex which is of the form in Theorem \ref{thm_main_construction} where $X$ is projective. The endomorphism algebra $B$ is $2$-subhomogeneous $nd$-representation finite by Theorem \ref{thm_main_construction}. As a consequence, we obtain an $nd\mathbb{Z}$-cluster tilting subcatgeory of $\mathcal{D}^b(A_{n+1}^d)$ which was unknown before. 

\begin{Thm}(Theorem \ref{thm_main_construction_higher_auslander_alg_typeA}) \label{thm_main_aus_alg_type_A}
        Let $A = A_{n+1}^d$ be a $(d-1)$-Auslander algebra of type $\mathbb{A}$ with $\gcd(n, d) = 1$. There is a tilting complex $T = \bigoplus_{i=1}^{n+d}\nu^iP$ where $P$ is the basic projective module with $|P| = \frac{1}{n+d}\binom{n+d}{d}$ defined in Definition \ref{def_the_proj_module}. Denote by $B = \End_{\mathcal{D}^b(A)}(T)$ the endomorphism algebra of $T$. Then the following statements hold. 
        \begin{itemize}
            \item [(i)] There is a triangle equivalence $F: \mathcal{D}^b(A) \xrightarrow[]{\sim} \mathcal{D}^b(B)$ which restricts to an equivalence between  $nd\mathbb{Z}$-cluster tilting subcategories $F: \mathcal{U}_{nd}(T)\xrightarrow[]{\sim}\mathcal{U}_{nd}(B)$.
            \item [(ii)] $B$ is $2$-subhomogeneous $nd\mathbb{Z}$-representation finite, i.e. $\gldim B = nd$ and $B\oplus DB$ is an $nd\mathbb{Z}$-cluster tilting module.
            \item [(iii)] $B$ is $\frac{nd}{n+d+1}$-Calabi-Yau.
        \end{itemize}
    \end{Thm}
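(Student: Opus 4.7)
The plan is to obtain parts (i) and (ii) as direct applications of Theorem \ref{thm_construction_minimal_dRF_alg} with parameter substitutions $d\mapsto nd$, $a\mapsto n+d$, and $X\mapsto \nu P$, and then to deduce (iii) from derived invariance of the fractionally Calabi-Yau property. The whole theorem thus reduces to verifying the two hypotheses of Theorem \ref{thm_main_construction}: that $A_{n+1}^d$ is $\frac{nd}{n+d+1}$-Calabi-Yau, and that $T = \bigoplus_{i=1}^{n+d}\nu^iP$ is a tilting complex in $\mathcal{D}^b(A)$.

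The Calabi-Yau dimension of $A_{n+1}^d$ is classical: higher Auslander algebras of type $\mathbb{A}$ are twisted fractionally Calabi-Yau, and a direct computation with the Nakayama permutation on indecomposable projectives (indexed by $d$-element subsets of $\{1,\ldots,n+d\}$) yields the fraction $\frac{nd}{n+d+1}$. A sanity check is provided by the specialisation $d=1$, which recovers the familiar $\frac{n}{n+2}$ for the path algebra of $\mathbb{A}_{n+1}$. This settles the first hypothesis.

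The main obstacle is to show that $T$ is a tilting complex, and I would proceed in three stages. First, the rank count: $|T| = (n+d)\cdot \frac{1}{n+d}\binom{n+d}{d} = \binom{n+d}{d}$, which matches the number of simple $A$-modules; the hypothesis $\gcd(n,d)=1$ enters precisely here, guaranteeing that $\frac{1}{n+d}\binom{n+d}{d}\in\mathbb{Z}$. Second, the Ext-vanishing $\Hom_{\mathcal{D}^b(A)}(\nu^iP, \nu^jP[k]) = 0$ for all $i,j\in\{1,\ldots,n+d\}$ and all $k\neq 0$. Using the Calabi-Yau identity $\nu^{n+d+1}\cong [nd]$, this reduces to computing $\Hom_{\mathcal{D}^b(A)}(P, \nu^\ell P[k])$ for $\ell\in\{-(n+d-1),\ldots,n+d-1\}$ and $k\neq 0$. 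This is the technical heart of the argument: the specific projective $P$ from Definition \ref{def_the_proj_module} must be chosen so that all these Hom-spaces vanish in the required range, a combinatorial computation using the explicit description of the $d$-cluster tilting module of $A_{n+1}^d$ in terms of $d$-subsets. Third, the thick-generation $\thick(T) = \mathcal{D}^b(A)$ follows once the previous two steps are in place, since an object with no self-extensions whose class fills the Grothendieck group and whose constituents lie in the $nd\mathbb{Z}$-cluster tilting subcategory of $\mathcal{D}^b(A)$ must generate the derived category.

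Having verified the tilting condition, Theorem \ref{thm_main_construction} yields both (i) and (ii) at once. For (iii), the Nakayama functor of any finite-dimensional algebra of finite global dimension is a Serre functor on $\mathcal{D}^b$, and is preserved up to natural isomorphism by any triangle equivalence. Hence the isomorphism $\nu_A^{n+d+1}\cong [nd]$ transports along $F$ to $\nu_B^{n+d+1}\cong [nd]$, proving that $B$ is $\frac{nd}{n+d+1}$-Calabi-Yau. I expect the combinatorial Ext-vanishing computation in Step 2 to be by far the most demanding piece; everything else is either invocation of Theorem \ref{thm_main_construction} or a formal consequence of derived equivalence.
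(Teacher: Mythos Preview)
Your overall architecture matches the paper's exactly: verify that $A_{n+1}^d$ is $\frac{nd}{n+d+1}$-Calabi-Yau, prove $T$ is tilting, apply Theorem~\ref{thm_construction_minimal_dRF_alg} for (i)--(ii), and invoke derived invariance of the Serre functor for (iii). Your outline of the rigidity step is also correct in spirit; the paper carries it out along the lines you sketch, reducing via Serre duality to $\Hom_{\mathcal{D}^b(A)}(P,\nu^t P[k])$ for $0\le t\le n+d$ and then settling these by lattice-path combinatorics (Propositions~\ref{prop_rigid_P_forall_k} and~\ref{prop_tilt_rigidity}).

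The genuine gap is your third step, thick generation. The assertion that ``an object with no self-extensions whose class fills the Grothendieck group and whose constituents lie in the $nd\mathbb{Z}$-cluster tilting subcategory of $\mathcal{D}^b(A)$ must generate'' is not a theorem, and as stated it is circular: no $nd\mathbb{Z}$-cluster tilting subcategory of $\mathcal{D}^b(A)$ is available at this point---its existence is exactly what Theorem~\ref{thm_construction_minimal_dRF_alg} will deliver \emph{after} $T$ is shown to be tilting. Even replacing this by the known $d\mathbb{Z}$-cluster tilting subcategory $\mathcal{U}_d(A)$, the implication ``rigid with $\binom{n+d}{d}$ indecomposable summands $\Rightarrow$ generates $\mathcal{K}^b(\proj A)$'' is false in general for algebras of global dimension greater than one, and you have not checked that the summands of $T$ are pairwise non-isomorphic or that their classes are independent in $K_0$. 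The paper proves generation by a completely different direct argument (Proposition~\ref{prop_tilt_generating}): it introduces two numerical invariants $(h_\ell,\mu_\ell)$ on lattice paths via an ``anchor point'', constructs for each $M_\ell\notin\add\{T[k]\mid k\in\mathbb{Z}\}$ an explicit minimal $(d+2)$-exact sequence whose other terms all have strictly smaller invariants (Lemma~\ref{lem_the_resolving_exact_seq}), and concludes by double induction that $DA\in\thick\langle T\rangle$. This is the second half of the technical core and is comparable in weight to the rigidity computation; your rank-count shortcut does not substitute for it. (A smaller point: $\gcd(n,d)=1$ is used not only in the cardinality formula but throughout the lattice-path combinatorics, e.g.\ to guarantee a unique Dyck path in each rotation orbit and a well-defined anchor.)
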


The derived equivalence of higher Auslander algebras of type $\mathbb{A}$ is of particular interest due to its connection with symplectic geometry \cite{DJL21, Ded23} and fractionally Calabi-Yau lattices \cite{Cha23, Got24}. The tilting object we constructed might be helpful in understanding the  partially wrapped Fukaya categories of the $d$-fold symmetric product of the $2$-dimensional unit disk with finitely many stops on its boundary \cite{DJL21}.

For a $d$-representation-finite algebra, its $d$-Auslander algebra \cite{Iya07b} and $(d+1)$-preprojective algebra \cite{IO13} enjoy nice homological properties.
We study such algebras associated to the endomorphism algebra $B$ obtained in Theorem \ref{thm_main_aus_alg_type_A}.
It turns out that these algebras are  closely related to the algebra $B_0 = \End_{\mathcal{D}^b(A)}(P)$. Indeed, $B$ (respectively the $nd$-Auslander algebra of $B$ ) can be described as the $(n+d)$-replicated algebra $B_0^{(n+d)}$ (respectively $(n+d+1)$-replicated algebra $B_0^{(n+d+1)}$) of $B_0$. 
The $(nd+1)$-preprojective algebra of $B$ is shown to be the $(n+d)$-fold trivial extension $T_{n+d}(B_0)$ of $B_0$. We obtain the following Theorem.  

\begin{Thm}(Corollary \ref{cor_B0_repetative_r_fold_trivial_ext}, Proposition \ref{prop_gldim_B0_fCY_B0}, Proposition \ref{prop_hig_aus_alg_of_B}) Assume $\gcd(n, d) = 1$. Let $s = \ulcorner \frac{d}{n} \urcorner$. The following statements hold true.
\begin{itemize}
    \item [(i)] $\gldim B_0 = d-s$ and $B_0$ is $\frac{(n-1)(d-1)}{n+d+1}$-Calabi-Yau.
    \item [(ii)] $B_0^{(n+d)}$ is $nd$-representation finite and $\frac{nd}{n+d+1}$-Calabi-Yau.
    \item [(iii)] $\gldim B_0^{(n+d+1)}\leq nd+1\leq \domdim B_0^{(n+d+1)}$.  
    \item [(iv)] $T_{n+d}(B_0)$ is self-injective. Moreover, $\underline{\modn} T_{n+d}(B_0)$ is $(nd+1)$-Calabi-Yau and admits an $(nd+1)$-cluster tilting module.
\end{itemize}
    
\end{Thm}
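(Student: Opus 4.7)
The skeleton of the argument is the identification $B \cong B_0^{(n+d)}$, which follows from the structure of $T = \bigoplus_{i=1}^{n+d} \nu^i P$: since $\Hom_{\mathcal{D}^b(A)}(\nu^i P, \nu^j P) \cong \Hom_{\mathcal{D}^b(A)}(P, \nu^{j-i} P)$ and $P$ is projective, a Nakayama-functor analysis shows that the matrix structure of $B = \End_{\mathcal{D}^b(A)}(T)$ matches the recursive definition of the $(n+d)$-replicated algebra of $B_0 = \End_A(P)$. Parallel computations identify the $nd$-Auslander algebra $\End_B(B \oplus DB)$ with $B_0^{(n+d+1)}$, and the $(nd+1)$-preprojective algebra $\Pi_{nd+1}(B)$ with the $(n+d)$-fold trivial extension $T_{n+d}(B_0)$, via the standard description of preprojective algebras of replicated algebras as trivial extensions.

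For (i), I would compute $\gldim B_0$ by resolving the simple $B_0$-modules. These correspond to indecomposable summands of $P$, and their minimal projective $B_0$-resolutions are obtained by applying $\Hom_A(P,-)$ to minimal projective $A$-resolutions of the corresponding $A$-modules; the bound $d-s$ with $s = \lceil d/n \rceil$ is read off from the explicit combinatorial description of $P$ in Definition \ref{def_the_proj_module}, where the coprimality hypothesis forces the extremal resolution length. The twisted fractional Calabi-Yau property of $B_0$ is then transported along $B \cong B_0^{(n+d)}$: since $B$ is $\frac{nd}{n+d+1}$-Calabi-Yau by Theorem \ref{thm_main_aus_alg_type_A}(iii), I would track how the Serre functor of $\mathcal{D}^b(B)$ permutes the layers of the replicated algebra to pin down the Calabi-Yau dimension $\frac{(n-1)(d-1)}{n+d+1}$ of the zeroth layer corresponding to $B_0$.

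Claim (ii) is immediate from the identification $B \cong B_0^{(n+d)}$ together with Theorem \ref{thm_main_aus_alg_type_A}(ii)--(iii). Claim (iii) follows by applying Iyama's classical theorem on higher Auslander algebras to the identification $B_0^{(n+d+1)} \cong \End_B(B \oplus DB)$, since $B$ is $nd$-representation finite. Claim (iv) combines three ingredients: $r$-fold trivial extensions are always self-injective; the isomorphism $T_{n+d}(B_0) \cong \Pi_{nd+1}(B)$; and the results of Iyama-Oppermann describing the stable module category of the $(d+1)$-preprojective algebra of a $d$-representation finite algebra, which provide both the Calabi-Yau structure and the cluster tilting module.

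The main obstacle is establishing the identification $B \cong B_0^{(n+d)}$: this requires computing all Hom spaces $\Hom_{\mathcal{D}^b(A)}(\nu^i P, \nu^j P[\ell])$ and showing that the negative Ext groups vanish in the expected range, which in turn demands a detailed analysis of the Nakayama orbits of the indecomposable summands of $P$ in $\mathcal{D}^b(A_{n+1}^d)$; this is precisely where the coprimality hypothesis $\gcd(n,d) = 1$ enters, ensuring that the orbit structure has the rigidity needed for the replicated-algebra presentation to hold on the nose.
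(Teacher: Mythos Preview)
Your architecture for (ii), (iii), (iv) is essentially the paper's: the identifications $B\cong B_0^{(n+d)}$, $\End_B(B\oplus DB)\cong B_0^{(n+d+1)}$, and $\Pi_{nd+1}(B)\cong T_{n+d}(B_0)$ reduce these claims to Theorem~\ref{thm_main_construction_higher_auslander_alg_typeA}, to Iyama's higher Auslander correspondence, and to the Iyama--Oppermann results on preprojective algebras, exactly as you outline. The paper obtains the replicated-algebra form of $B$ from a single vanishing result, $\Hom_{\mathcal{D}^b(A)}(P,\nu^iP)=0$ for all $i\geq 2$ (Corollary~\ref{cor_hom_p_to_nu_power_p}), which collapses the general matrix description of $B$ from Section~\ref{section_construction} to the upper-triangular replicated form; this is the ``rigidity'' you allude to at the end, and it is proved via the lattice-path combinatorics (Proposition~\ref{prop_existence_2*1_square}).

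Part (i) is where your plan diverges and has gaps. For $\gldim B_0=d-s$, you propose applying $\Hom_A(P,-)$ to minimal projective $A$-resolutions. But $P$ is not a generator of $\modn A$, so projective $A$-resolutions do not land in $\add P$ and do not yield projective $B_0$-resolutions under $\Hom_A(P,-)$; there is no mechanism here that produces the bound $d-s$. The paper instead proves a separate structural result: $B_0\cong eA'e\cong A'/\langle 1-e\rangle$ for $A'=A_{n+1}^{d-s}$ and a suitable idempotent $e$. The quotient description gives an exact, projective-preserving embedding $\modn B_0\hookrightarrow\modn A'$, whence $\gldim B_0\leq\gldim A'=d-s$; equality is then checked on one explicit simple via the Auslander-algebra description of $A'$.

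For the Calabi--Yau dimension of $B_0$, ``tracking how the Serre functor of $\mathcal{D}^b(B)$ permutes the layers'' is not a known technique and does not obviously pin down $\nu_{B_0}$: the Serre functor of a replicated algebra does not simply restrict to that of the base layer. The paper's route is entirely different: it invokes the characterisation of fractional Calabi--Yau dimensions via periodicity of the syzygy functor on $\underline{\modn}^{\mathbb{Z}}T(B_0)$ (Proposition~\ref{prop_CDIM_results}), passes through the chain of equivalences
\[
\underline{\modn}^{\mathbb{Z}}T(B_0)\;\simeq\;\underline{\modn}^{\mathbb{Z}}T_{n+d}(B_0)\;\simeq\;\underline{\modn}^{\mathbb{Z}}\Pi_{nd+1}(B)\;\simeq\;\underline{\modn}\,\mathcal{U},
\]
and then uses the Iyama--Oppermann formula $\mathbb{S}_{\mathcal{U}}\circ[-nd-1]_{\mathcal{U}}\cong\nu_{B,nd}^{\ast}$ to compute $\Omega^{nd+2}\cong(-(n+d+1))$ on $\underline{\modn}^{\mathbb{Z}}T(B_0)$, which translates back to $B_0$ being $\frac{(n-1)(d-1)}{n+d+1}$-Calabi--Yau. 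This passage through the trivial extension and the CDIM criterion is the missing idea in your plan.
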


\section{Preliminaries}

\subsection{Conventions and notations}
Throughout this paper,
we fix positive integers $d$ and $n\geq 2$.
We work over an arbitrary field $k$.
For a quiver $Q$, the concatenation $pq$ of paths $p$, $q$ means firstly $q$ then $p$.
Unless stated otherwise,
all algebras are finite dimensional $k$-algebras and all modules are finite dimensional right modules.
We denote by $D$ the $k$-duality $\Hom_k(-, k)$.

Let $A$ be a finite dimensional algebra over $k$ and $\modn A$ the category of finitely generated right $A$-modules.
We denote by $\underline{\modn} A$ the projectively stable module category of $A$,
that is the category with the same  objects as $\modn A$ and morphisms given by $\underline{\Hom}_A(M,N) = \Hom_A(M,N)/\mathcal{P}(M,N)$ where $\mathcal{P}(M, N)$ denotes the subspace of morphisms factoring through projective modules.
We denote by $\Omega: \underline{\modn}A\rightarrow \underline{\modn}A$ the syzygy functor defined by $\Omega(M)$ being the kernel of the projective cover $P(M)\twoheadrightarrow M$.
Let $\Omega^0(M) = M$ and $\Omega^{i+1}(M) = \Omega(\Omega^i(M))$ for $i\geq 0$.
The injectively stable module category $\overline{\modn}A$ of $A$ and the cosyzygy functor $\Omega^{-}: \overline{\modn}A\rightarrow \overline{\modn}A$ are defined dually.

We consider the $d$-Auslander-Reiten translations $\tau_d: \underline{\modn}A\rightarrow \overline{\modn}A$    and $\tau_d^{-1}: \overline{\modn}A\rightarrow \underline{\modn}A$
defined by $\tau_d = \tau\Omega^{d-1}$ and $\tau_d^{-1} = \tau^{-1}\Omega^{-(d-1)}$
where $\tau$ and $\tau^{-1}$ denote the usual Auslander-Reiten translations.

Let $\mathcal{T}$ be a triangulated category.
We denote by $[1]$ the suspension functor of $\mathcal{T}$.
By $\thick\langle T\rangle$ we mean the smallest thick triangulated subcategory of $\mathcal{T}$ generated by $T\in \mathcal{T}$.
For a finite dimensional algebra $A$, we denote by $\proj A$ the additive category of finitely generated projective $A$-modules and $\mathcal{K}^b(\proj A)$ the homotopy category of bounded complexes of $\proj A$. 
Moreover, we denote by $\mathcal{D}^b(A)$ the bounded derived category of $A$.

\subsection{$d$-cluster tilting subcategories}

Let $\mathcal{M}$ be a subcategory of a category $\mathcal{C}$ and let $C\in \mathcal{C}$.
A right $\mathcal{M}$-approximation of $C$ is a morphism $f: M\rightarrow C$ with $M\in \mathcal{M}$ such that all morphisms $g: M'\rightarrow C$ with $M'\in \mathcal{M}$ factor through $f$. We say that 
$\mathcal{M}$ is contravariantly finite in $\mathcal{C}$ if every $C\in \mathcal{C}$ admits a right $\mathcal{M}$-approximation.
The notions of left $\mathcal{M}$-approximation and covariantly finite are defined dually.
We say that $\mathcal{M}$ is functorially finite in $\mathcal{C}$ if $\mathcal{M}$ is both contravariantly finite and covariantly finite.
In particular, if $M\in \modn A$, 
then $\add M$ is functorially finite.
Recall in the case when $\mathcal{C}$ is abelian, $\mathcal{M}$ is called a generating (resp. cogenerating) subcategory if for any object $C\in\mathcal{C}$, there exists an epimorphism $M\rightarrow C$ (resp. monomorphism $C \rightarrow M$) with $M\in \mathcal{M}$. In particular, $\add M\subset \modn A$ is called generating if and only if $A\in \add M$ and cogenerating if and only if $DA\in \add M$.

\begin{Def}[\cite{Iya11, IY08, IJ17}]
    Let $d$ be a positive integer.
    Let $\mathcal{C}$ be an abelian or a triangulated category,
    and $A$ a finite-dimensional $k$-algebra.
    \begin{itemize}
        \item [(a)] We call a subcategory $\mathcal{M}$ of $\mathcal{C}$ a $d$-cluster tilting subcategory if it is functorially finite, generating-cogenerating if $\mathcal{C}$ is abelian and 
        \begin{align*}
            \mathcal{M} & = \{ C\in \mathcal{C} \mid \Ext_{\mathcal{C}}^i(C, \mathcal{M}) = 0 \text{ for } 1\leq i\leq d-1\}\\
            & = \{ C\in \mathcal{C} \mid \Ext_{\mathcal{C}}^i(\mathcal{M}, C) = 0 \text{ for } 1\leq i\leq d-1\}.
        \end{align*}
        If moreover $\Ext_{\mathcal{C}}^i(\mathcal{M}, \mathcal{M}) \neq 0$ implies that $i\in d\mathbb{Z}$,
        then we call $\mathcal{M}$ a $d\mathbb{Z}$-cluster tilting subcategory.
        \item [(b)] A finitely generated module $M\in \modn A$ is called a $d$-cluster tilting module (respectively $d\mathbb{Z}$-cluster tilting module) if $\add M$ is a $d$-cluster tilting subcategory (respectively $d\mathbb{Z}$-cluster tilting subcategory) of $\modn A$.
    \end{itemize}
\end{Def}

Let $A$ be a finite dimensional algebra such that $\gldim A < \infty$. 
Let 
\begin{equation*}
    \nu = D\circ \mathbb{R}\Hom_{A}(-,A) \cong -\otimes^{\mathbb{L}}_A DA : \mathcal{D}^b(A) \rightarrow \mathcal{D}^b(A) 
\end{equation*}
be the Nakayama functor of $\mathcal{D}^b(A)$. There is  the functorial isomorphism \cite[Theorem 4.6]{Hap88}
\begin{equation*}
    \Hom_{\mathcal{D}^b(A)}(X,Y) \cong D\Hom_{\mathcal{D}^b(A)}(Y,\nu X),
\end{equation*}
in other words, the Nakayama functor is the Serre functor of $\mathcal{D}^b(A)$.
Denote by 
\begin{equation*}
    \nu_d = \nu\circ [-d]: \mathcal{D}^b(A)\rightarrow \mathcal{D}^b(A).
\end{equation*}
Assume that $\gldim A\leq d$, then 
\begin{equation*}
    \tau_d^{\ell}(M) \cong H^0(\nu_d^{\ell}M)
\end{equation*}
for all $\ell\in \mathbb{Z}$ and $M\in\modn A$ by \cite[Lemma 5.5]{Iya11}.
Following \cite{Iya11}, we say that $A$ is $\tau_d$-finite if moreover  $\tau_d^{\ell}(DA) = 0$ for a sufficiently large integer $\ell$.
We recall the following construction of $d$-cluster tilting subcategories in $\mathcal{D}^b(A)$. For $T\in \mathcal{D}^b(A)$, set
\begin{equation*}
    \mathcal{U}_d(T) :=\{\nu_d^i(T)\mid i\in \mathbb{Z}\} \subset \mathcal{D}^b(A).
\end{equation*}

\begin{Thm}(\cite[Theorem 1.23]{Iya11})\label{thm_Ud(T)_d-ct}
    Let $A$ be a $\tau_d$-finite algebra. Then $\mathcal{U}_d(A)$ is a $d$-cluster tilting subcategory of $\mathcal{D}^b(A)$. Moreover, $\mathcal{U}_d(T)$ is a $d$-cluster tilting subcategory of $\mathcal{D}^b(A)$ for any tilting complex $T\in \mathcal{D}^b(A)$ satisfying $\gldim \End_{\mathcal{D}^b(A)}(T)\leq d$.
\end{Thm}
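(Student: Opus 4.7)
My plan is to verify the three defining properties of a $d$-cluster tilting subcategory for $\mathcal{U}_d(A)$ directly, and then to deduce the statement for $\mathcal{U}_d(T)$ via a Morita-type derived equivalence.

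First I would use that $\tau_d$-finiteness includes $\gldim A \leq d$, so that $\nu$ and hence $\nu_d = \nu \circ [-d]$ are auto-equivalences of $\mathcal{D}^b(A)$. This lets me reduce any Ext computation between orbit-elements to a cohomology computation on the single orbit containing $A$:
\begin{equation*}
    \Ext^k_{\mathcal{D}^b(A)}(\nu_d^i A,\,\nu_d^j A) \;\cong\; \Ext^k_{\mathcal{D}^b(A)}(A,\,\nu_d^{j-i} A) \;\cong\; H^k(\nu_d^{j-i} A),
\end{equation*}
where the last isomorphism uses that $A$ is projective. The key technical claim is then that $H^k(\nu_d^m A) = 0$ for $1 \leq k \leq d-1$ and every $m \in \mathbb{Z}$. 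For $m = 0$ this is trivial; for $m = \pm 1$ it follows from $\nu_d A = DA[-d]$ being concentrated in degree $d$, and $\nu_d^{-1} A = \nu^{-1}A[d]$ having cohomology in $[-d,0]$ because $DA$ admits a projective resolution of length $\leq d$; the general $|m| \geq 2$ step goes by induction using truncation triangles and the fact that each application of $\nu^{\pm 1}$ shifts cohomology by at most $d$. This yields the $d$-rigidity of $\mathcal{U}_d(A)$.

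Next I would exploit the $\tau_d$-finiteness to obtain functorial finiteness and the maximality characterization. The cited identification $H^0(\nu_d^\ell M) = \tau_d^\ell M$ together with $\tau_d^\ell(DA) = 0$ for $\ell \gg 0$ forces each $\nu_d$-orbit to have only finitely many indecomposable summands whose Hom-sets with a given $X$ can be nonzero, so right and dually left $\mathcal{U}_d(A)$-approximations of any $X \in \mathcal{D}^b(A)$ can be assembled from those finitely many contributions. For maximality, i.e.\ that every $X$ with $\Ext^k(X,\mathcal{U}_d(A)) = 0$ for $1 \leq k \leq d-1$ actually lies in $\mathcal{U}_d(A)$, I would proceed by induction on the cohomological amplitude of $X$, peeling off orbit summands via approximation triangles and invoking the amplitude bound from the previous paragraph to control what remains.

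For the second assertion, the tilting complex $T$ induces a triangle equivalence $F = \mathbb{R}\Hom_A(T,-) : \mathcal{D}^b(A) \xrightarrow{\sim} \mathcal{D}^b(B)$ sending $T$ to $B$. Since $F$ is a triangle equivalence between categories with Serre functors, it intertwines the Nakayama functors and hence $\nu_d$, so it maps $\mathcal{U}_d(T)$ bijectively onto $\mathcal{U}_d(B)$. By hypothesis $\gldim B \leq d$; it remains to verify that $B$ inherits $\tau_d$-finiteness, which I would derive from the corresponding vanishing on the $A$-side by expressing $T$ as a finite iterated cone of shifts of $A$ and using that $\nu_d$-orbits are preserved by $F$. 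Applying the first part of the theorem to $B$ then gives $\mathcal{U}_d(B)$ as a $d$-cluster tilting subcategory of $\mathcal{D}^b(B)$, which pulls back along $F^{-1}$ to $\mathcal{U}_d(T) \subset \mathcal{D}^b(A)$. The hardest part I expect is the cohomological vanishing for $\nu_d^m A$ combined with the maximality argument: this is where $\gldim A \leq d$ and $\tau_d$-finiteness must simultaneously prevent cohomology from entering the forbidden degrees $1,\ldots,d-1$.
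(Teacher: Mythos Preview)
This theorem is not proved in the paper: it is quoted verbatim from \cite[Theorem 1.23]{Iya11} and used as a black box. Consequently there is no proof here to compare your proposal against.

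On the merits of your sketch: the overall architecture (reduce Ext-vanishing on the $\nu_d$-orbit to cohomology of $\nu_d^m A$; obtain functorial finiteness from $\tau_d$-finiteness; transport the result to $\mathcal{U}_d(T)$ via the derived equivalence $F$ intertwining Serre functors) is the natural one and matches how Iyama proceeds in \cite{Iya11}. However, two steps are under-specified relative to what the actual proof requires. First, the inductive control of the cohomological amplitude of $\nu_d^m A$ is more delicate than ``each application of $\nu^{\pm 1}$ shifts cohomology by at most $d$'': one needs that the cohomology stays concentrated in a single degree divisible by $d$, not merely in a window of width $d$, and this is exactly where the hypothesis $\gldim A \le d$ and the identification $H^0(\nu_d^m A)\cong \tau_d^m A$ do real work. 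Second, the maximality direction (that $d$-rigidity against $\mathcal{U}_d(A)$ forces membership in $\mathcal{U}_d(A)$) is the substantive content of Iyama's theorem; the ``peel off orbit summands by approximation triangles and induct on amplitude'' outline does not by itself explain why the leftover term must vanish rather than merely have smaller amplitude. If you want to write a self-contained proof, consult \cite[\S5]{Iya11} for the precise filtration arguments.
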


We recall the definition of $d$-representation finite algebras and collect related results that are needed later.

\begin{Def}(\cite{IO11})
    Let $A$ be a finite dimensional $k$-algebra. If $\gldim A\leq d$ and $A$ has a $d$-cluster tilting module $M$. Then $A$ is called $d$-representation finite.
\end{Def}

\begin{Thm}(\cite[Theorem 3.1]{IO11}, \cite[Proposition 1.3, Theorem 1.6, Lemma 5.5(b)]{Iya11}) \label{thm_d_rf_alg}
    Let $A$ be a finite dimensional algebra with $\gldim A\leq d$. Then the followings are equivalent.
    \begin{itemize}
        \item [(i)] $A$ is $d$-representation finite;
        \item [(ii)] for each indecomposable projective module $P$, there is an integer $\ell_p\geq 1$ such that $\nu_d^{-(\ell_p-1)}P\in \add DA$.
    \end{itemize}
    In this case, we have 
    \begin{itemize}
        \item [(a)] $\tau_d^i(P) = \nu_d^i(P)$ for all $0\leq i\leq \ell_p-1$;
        \item [(b)] $\mathcal{M} = \add\{\tau_d^i(P)\mid P \text{ is indecomposable projective }, 0\leq i\leq \ell_p-1\}$ is the unique $d$-cluster tilting subcategory of $\modn A$; and 
        \item [(c)] $\mathcal{U} = \add\{M[di]\mid M\in \mathcal{M}, i\in \mathbb{Z}\} = \add\{\nu_d^i(A)\mid i\in \mathbb{Z}\} = \mathcal{U}_d(A)$ is a $d\mathbb{Z}$-cluster tilting subcategory of $\mathcal{D}^b(A)$. 
    \end{itemize}
\end{Thm}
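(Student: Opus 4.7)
The statement gathers standard equivalences from \cite{IO11, Iya11}, and my plan is to present it as a clean assembly of the cited ingredients rather than a new argument from scratch. The essential bridge is Iyama's identification $\tau_d^i(M) \cong H^0(\nu_d^i M)$ for $M \in \modn A$, available because $\gldim A \le d$, which converts the module-theoretic notion of $d$-representation finiteness into a derived-category statement about $\nu_d$-orbits. The other input is Theorem \ref{thm_Ud(T)_d-ct}, which produces $\mathcal{U}_d(A)$ as a $d$-cluster tilting subcategory of $\mathcal{D}^b(A)$ whenever $A$ is $\tau_d$-finite.

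I would begin with (ii) $\Rightarrow$ (i). The hypothesis makes the $\nu_d$-orbit of any indecomposable projective enter $\add DA$ after finitely many steps; applying $\nu_d$ once more then produces a shift of $A$, so the orbit of $A$ closes up modulo $[d]$ and $A$ is $\tau_d$-finite. Theorem \ref{thm_Ud(T)_d-ct} applied to $T = A$ then gives $\mathcal{U}_d(A)$ as a $d$-cluster tilting subcategory of $\mathcal{D}^b(A)$. I would next verify by induction on $i$ that $\nu_d^i P$ is concentrated in cohomological degree zero for $0 \le i \le \ell_P - 1$, using $\gldim A \le d$ to rule out non-zero cohomology in negative degrees and the $d$-cluster tilting axiom to rule it out in positive degrees below $d$. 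Setting $\mathcal{M} := \mathcal{U}_d(A) \cap \modn A$ and invoking the identification with $H^0$ then describes $\mathcal{M}$ as in (b); it is generating (contains $A$ at $i=0$) and cogenerating (contains $DA$ by hypothesis), so it is the sought $d$-cluster tilting subcategory of $\modn A$.

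For (i) $\Rightarrow$ (ii), I would use higher Auslander--Reiten duality $\Ext_A^d(X, Y) \cong D\underline{\Hom}_A(Y, \tau_d X)$ on the $d$-cluster tilting module $M$ to conclude that $\tau_d$ induces a bijection between indecomposable non-projective and non-injective summands of $M$. Since $\add M$ has only finitely many indecomposables, iterating $\tau_d$ on an indecomposable projective $P$ must terminate at an injective after finitely many steps, yielding the integer $\ell_P$. Passing back from $\tau_d$ to $\nu_d$ via the $H^0$-identification then converts this into condition (ii).

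Items (a), (b), (c) follow quickly from the construction: (a) is the degree-zero concentration combined with $\tau_d^i = H^0(\nu_d^i)$; (b) uses the $\Ext$-characterization of $d$-cluster tilting subcategories in $\modn A$ to upgrade existence into uniqueness; and (c) is Theorem \ref{thm_Ud(T)_d-ct} applied to $T = A$, together with the observation that every $\nu_d^i A$ decomposes as a $d$-fold shift of a module in $\mathcal{M}$, so the three descriptions of $\mathcal{U}$ coincide. The main obstacle in the whole assembly is the inductive degree-zero concentration of $\nu_d^i P$; this is where the hypothesis $\gldim A \le d$ is used essentially, and it is what makes the translation between the module-theoretic $\tau_d$-picture and the derived-category $\nu_d$-picture work at all.
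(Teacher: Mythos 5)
The paper recalls this theorem in its preliminaries with explicit citations to \cite{IO11} and \cite{Iya11} and supplies no proof of its own, so there is no internal argument to compare against. Your sketch is an essentially correct reconstruction of how those references fit together, and you correctly isolate the two load-bearing inputs: the identification $\tau_d^\ell(M)\cong H^0(\nu_d^\ell M)$ valid when $\gldim A\le d$, and Theorem~\ref{thm_Ud(T)_d-ct}. Two spots would need sharpening in a complete write-up. In the direction (ii)~$\Rightarrow$~(i), the remark that the $\nu_d$-orbit of $A$ ``closes up modulo $[d]$'' and the inductive degree-zero concentration of $\nu_d^{-i}P$ are logically intertwined: concluding $\tau_d$-finiteness from (ii) already relies on knowing that the $\nu_d^{-1}$-orbit of each projective is a chain of stalk complexes terminating at an injective, which is precisely what you establish afterwards by induction, so the induction has to come first and the $\tau_d$-finiteness be read off from it. In (i)~$\Rightarrow$~(ii), the statement that the $\tau_d^{-1}$-orbit of $P$ ``must terminate at an injective'' needs a word excluding a periodic orbit: if $\tau_d^{-i}P\cong\tau_d^{-j}P$ with $0\le i<j$, one could push back to $P\cong\tau_d^{-(j-i)}P$, which is impossible since within the $d$-cluster tilting subcategory $\tau_d^{-1}$ carries non-injective indecomposables to non-projective ones while $P$ is projective; combined with finiteness of $\ind(\add M)$ this forces termination. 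These are exactly the places where the cited arguments in \cite{Iya11} and \cite{IO11} do nontrivial work, so they should be flagged rather than absorbed into ``iterating must terminate''; otherwise your assembly is faithful.
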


\subsection{Higher Auslander algebras and higher preprojective algebras}
We recall certain constructions of algebras associated to an algebra $A$ with $\gldim A \leq d$, namely the $(d+1)$-preprojective algebra and the $d$-Auslander algebra in the case when $A$ is $d$-representation finite.
We collect some properties of these algebras.

\begin{Def}\cite[Definition 2.11]{IO13}
    Let $A$ be an algebra with $\gldim A\leq d$. Then the $(d+1)$-preprojective algebra of $A$ is 
    \begin{equation*}
        \Pi_{d+1}(A) = T_A\Ext_A^d(DA, A),
    \end{equation*}
    that is the tensor algebra of the $A$-$A$-bimodule $\Ext_A^d(DA, A)$.
\end{Def}

Alternatively by \cite[Lemma 2.13]{IO13} and \cite[Lemma 5.5]{Iya11}, we have
\begin{equation*}
    \Pi_{d+1}(A) \cong \bigoplus_{i\geq 0}\tau_d^{-i}A \cong \bigoplus_{i\geq 0}H^0(\nu_d^{-i}A) \cong \bigoplus_{i\geq 0}\Hom_{\mathcal{D}^b(A)}(A, \nu_d^{-i}A).
\end{equation*}

In the case when $A$ is $d$-representation finite, $\Pi_{d+1}(A)$ has nice properties as we collect with the following proposition. Recall that for a triangulated category $\mathcal{T}$ with Serre functor $\mathbb{S}$, we say $\mathcal{T}$ is $n$-Calabi-Yau if 
\begin{equation*}
    \mathbb{S} \cong [n],
\end{equation*}
as triangulated functors.

\begin{Prop}\cite[Corollary 3.4, Corollary 4.16]{IO13}\label{prop_properties_hig_preproj_alg}
    Let $A$ be a $d$-representation finite algebra and $\Pi_{d+1}(A)$ be its $(d+1)$-preprojective algebra. The following statements hold.
    \begin{itemize}
        \item [(i)] $\Pi_{d+1}(A)$ is self-injective. 
        \item [(ii)] $\underline{\modn} \Pi_{d+1}(A)$ is $(d+1)$-Calabi-Yau triangulated with a $(d+1)$-cluster tilting object.
        \item [(iii)] $\modn \Pi_{d+1}(A)$ admits a $(d+1)$-cluster tilting object  $\Hom_A(\Pi_{d+1}(A), \Pi_{d+1}(A))$.
    \end{itemize}
\end{Prop}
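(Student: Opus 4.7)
\medskip

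The plan is to work entirely inside the $d\mathbb{Z}$-cluster tilting subcategory $\mathcal{U} = \mathcal{U}_d(A) \subset \mathcal{D}^b(A)$ from Theorem \ref{thm_d_rf_alg}(c) and transport the statements to $\Pi := \Pi_{d+1}(A)$ via the identification
\begin{equation*}
    \Pi \;\cong\; \End_{\mathcal{U}/\nu_d}(A),
\end{equation*}
where $\mathcal{U}/\nu_d$ denotes the orbit category. This identification follows from the formula $\Pi \cong \bigoplus_{i\geq 0}\Hom_{\mathcal{D}^b(A)}(A,\nu_d^i A)$ together with the vanishing $\Hom_{\mathcal{D}^b(A)}(A,\nu_d^i A) = 0$ for $i<0$ (which is forced by $\gldim A\leq d$ combined with $\tau_d$-finiteness). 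Under this identification, $\proj\Pi$ corresponds to $\add(A) \subset \mathcal{U}/\nu_d$, and the Nakayama functor $\nu_\Pi$ on $\proj\Pi$ corresponds to the Serre functor $\nu$ of $\mathcal{D}^b(A)$ acting on $\add(A)$ modulo $\nu_d$.

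\textbf{Step 1 (Self-injectivity).} Using $\nu = \nu_d[d]$ and the fact that $\nu_d$ becomes the identity in the orbit category, I would show that $\nu_\Pi$ corresponds to the shift $[d]$. Since $A[d] \in \mathcal{U} = \add\{\nu_d^i A \mid i\in\mathbb{Z}\}$, the shift $[d]$ preserves $\add(A)$ in the orbit category and induces a permutation on the indecomposable summands of $A$. Hence $\nu_\Pi$ is an autoequivalence of $\proj\Pi$, so every indecomposable projective $\Pi$-module is also injective, proving (i).

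\textbf{Step 2 (Stable category is $(d+1)$-Calabi-Yau).} Once self-injectivity is established, $\underline{\modn}\,\Pi$ is triangulated with shift $\Omega^{-1}$ and Serre functor $\underline{\nu}_\Pi \circ \Omega$. For the $(d+1)$-Calabi-Yau property I need $\underline{\nu}_\Pi \circ \Omega \cong \Omega^{-(d+1)}$, equivalently $\underline{\nu}_\Pi \cong \Omega^{-(d+2)}$. I would verify this on projectively generators using the Koszul-type resolution of the diagonal bimodule $\Pi$ over $\Pi\otimes\Pi^{\op}$ arising from the fact that $A$ has global dimension $\leq d$ and $\Pi$ is a tensor algebra over the degree-$d$ Ext-bimodule; this is essentially the content of \cite[Proposition 4.2]{IO13}. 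The $(d+1)$-cluster tilting object in $\underline{\modn}\,\Pi$ is then furnished by passing the $d$-cluster tilting subcategory $\mathcal{M}\subset\modn A$ through the Koszul duality / orbit construction.

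\textbf{Step 3 (Cluster tilting in $\modn\Pi$).} For (iii), let $M\in\modn A$ be the $d$-cluster tilting module of Theorem \ref{thm_d_rf_alg}(b). I would show that the $\Pi$-module $\Hom_A(\Pi,M)$ (reading the notation in the statement accordingly, where the $A$-action on $\Pi$ comes from the canonical map $A\hookrightarrow\Pi$) is a $(d+1)$-cluster tilting module. The Ext-vanishing $\Ext^i_\Pi(\Hom_A(\Pi,M),\Hom_A(\Pi,M)) = 0$ for $1\leq i\leq d$ reduces, via the long exact sequence induced by the bar-type resolution of $\Pi$ over $A$, to the $d$-cluster tilting Ext-vanishing for $M$ in $\modn A$. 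The functorial finiteness and generating/cogenerating properties follow from those of $M$ together with self-injectivity of $\Pi$.

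\textbf{Main obstacle.} The technical heart is Step 2: establishing the precise identification of the Nakayama functor of $\Pi$ with a shift of $\Omega$ on $\underline{\modn}\,\Pi$. This requires controlling a projective bimodule resolution of $\Pi$ of length exactly $d+1$, which in turn relies on the higher preprojective algebra being a ``generalized $(d+1)$-Calabi-Yau completion'' of $A$ in the sense of Keller. I would not reprove these bimodule resolution statements from scratch but cite \cite[Theorem 4.8]{IO13}; the novelty in the argument is only the clean packaging via the orbit category $\mathcal{U}/\nu_d$.
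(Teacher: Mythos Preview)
The paper does not prove this proposition; it is stated with a citation to \cite[Corollary 3.4, Corollary 4.16]{IO13} and no argument is given. So there is no ``paper's own proof'' to compare your proposal against.

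As an independent sketch, your outline is broadly aligned with how the results are obtained in \cite{IO13}: the identification $\proj\Pi \simeq \mathcal{U}/\nu_d$ and the fact that $\nu_\Pi$ corresponds to $[d]$ in the orbit category is exactly \cite[Theorem 3.1, Corollary 3.4]{IO13}, and your Step~1 is correct on this point. For Step~2, your target relation $\underline{\nu}_\Pi \cong \Omega^{-(d+2)}$ is right, but the passage ``I would verify this on projectively generators using the Koszul-type resolution'' hides the real work: one needs a triangle equivalence between $\underline{\modn}\,\Pi$ and a suitable quotient of $\mathcal{D}^b(A)$ (not just an equivalence on the $(d+2)$-angulated orbit category $\mathcal{U}/\nu_d$), under which the shift $\Omega^{-1}$ and the Serre functor are identified simultaneously. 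In \cite{IO13} this is done via the covering $\underline{\modn}^{\mathbb{Z}}\Pi \simeq \mathcal{D}^b(A)$ and the explicit bimodule computation of the Nakayama automorphism (their Section~4), not by a direct bar resolution argument. Your Step~3 reads the object $\Hom_A(\Pi,\Pi)$ correctly (noting that $\Pi \cong M$ as $A$-modules), but the reduction of the $\Ext$-vanishing to the $d$-cluster tilting property of $M$ again requires the graded covering rather than a single long exact sequence.

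In short: nothing to compare, and your sketch points in the right direction, but if you intend to supply an actual proof you should replace the hand-waving in Steps~2 and~3 by the graded-to-ungraded covering argument of \cite[\S4]{IO13}; the orbit category $\mathcal{U}/\nu_d$ alone only sees the projectives of $\Pi$, not the full stable module category.
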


Now we recall the definition and properties of the $d$-Auslander algebra of a $d$-representation finite algebra.

\begin{Def}(\cite{Iya11})
    Let $A$ be a $d$-representation finite algebra with the basic $d$-cluster tilting module $M = \bigoplus_{i\geq 0}\tau_d^{-i}(A)$. 
    We call $\Lambda = \End_A(M)$ the $d$-Auslander algebra of $A$. 
\end{Def}

\begin{Prop}(\cite[Theorem 0.2]{Iya07b})\label{prop_aus_alg_gldim_domdim}
    We have $\gldim \Lambda\leq d+1 \leq \domdim \Lambda$.
\end{Prop}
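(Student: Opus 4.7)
The plan is to exploit the two defining properties of a $d$-cluster tilting module $M \in \modn A$: the existence of $\add M$-resolutions of length $d$ for every $A$-module (from both sides, since $M$ is generating-cogenerating), and the vanishing of $\Ext^i_A(M,M)$ for $1 \le i \le d-1$. Under the equivalence $\Hom_A(M,-): \add M \xrightarrow{\sim} \proj \Lambda$, these two properties translate directly into the desired homological bounds on $\Lambda$.

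For the bound $\gldim \Lambda \le d+1$: every simple $\Lambda$-module $S_N$ arises as the top of an indecomposable projective $P_N = \Hom_A(M,N)$ for some indecomposable $N \in \add M$. To bound its projective dimension, it suffices to resolve $\rad P_N \cong \Hom_A(M, \rad N)$. I would take a $d$-cluster tilting resolution
\begin{equation*}
    0 \to M_d \to M_{d-1} \to \cdots \to M_0 \to \rad N \to 0
\end{equation*}
with $M_i \in \add M$, and apply $\Hom_A(M,-)$. The hypothesis $\Ext^i_A(M,M_j) = 0$ for $1 \le i \le d-1$ guarantees that the resulting sequence of $\Lambda$-modules remains exact, producing a projective $\Lambda$-resolution of $\rad P_N$ of length $d$. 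Hence $\projdim_\Lambda S_N \le d+1$ and $\gldim \Lambda \le d+1$.

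For the bound $\domdim \Lambda \ge d+1$: the key identification is that an indecomposable projective $P_N = \Hom_A(M,N)$ is also injective over $\Lambda$ precisely when $N$ is an injective $A$-module, i.e., $N \in \add DA$; concretely, for $N = D(eA)$ one has $\Hom_A(M, D(eA)) = D(eA \otimes_A M) \cong D(eM)$, which is the injective hull of the simple indexed by $N$. Dually to the previous step, I would start from the inclusion $A \hookrightarrow DA^{\,?}$ extended to an $\add M$-coresolution and feed it into $\Hom_A(M,-)$; using that $M$ is cogenerating and again the $\Ext^i(M,M) = 0$ vanishing, I obtain the first $d+1$ terms of an injective copresentation of $\Lambda$ all of which lie in $\add \Hom_A(M, DA)$ and are therefore projective-injective. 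This is exactly $\domdim \Lambda \ge d+1$.

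The main obstacle, and the part most worth spelling out carefully, is the bijection between indecomposable injective summands of $M$ and indecomposable projective-injective $\Lambda$-modules — everything else is formal dimension shifting once the $\Hom_A(M,-)$ functor is seen to convert $d$-cluster tilting resolutions into projective (respectively injective-projective) $\Lambda$-resolutions. In particular, one should verify that the copresentation produced above is \emph{minimal} up to the relevant degree, which amounts to checking that left $\add DA$-approximations inside $\add M$ are the same data as right $\add M$-approximations after passing to $\Lambda$; this follows from the $\Ext$-vanishing but deserves a careful diagram chase.
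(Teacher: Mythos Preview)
The paper does not give a proof here --- the proposition is quoted from \cite[Theorem 0.2]{Iya07b}. Your outline is in the spirit of Iyama's argument, but the global-dimension half contains a genuine error: the claimed isomorphism $\rad P_N \cong \Hom_A(M,\rad N)$ is false. A non-split epimorphism $M' \twoheadrightarrow N$ with $M' \in \add M$ indecomposable is a radical morphism in $\add M$, hence lies in $\rad P_N$, yet its image is all of $N$; for a concrete failure take $N \in \add M$ simple and non-projective, where $\rad N = 0$ but $\rad P_N \neq 0$. The standard repair bypasses simple modules: for an arbitrary $Y \in \modn\Lambda$ choose a projective presentation $\Hom_A(M,M_1) \xrightarrow{\Hom_A(M,f)} \Hom_A(M,M_0) \to Y \to 0$, so that by left exactness $\Omega^2_{\Lambda} Y \cong \Hom_A(M,\ker f)$; an $\add M$-resolution of $\ker f$ of length $d-1$ (the $(d-1)$-st kernel lands in $\add M$ by the defining orthogonality) pushes forward under $\Hom_A(M,-)$ to a projective resolution of $\Omega^2_{\Lambda} Y$, giving $\projdim_{\Lambda} Y \leq d+1$.

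The dominant-dimension half is essentially right, though the reason $\Hom_A(M,I)$ is injective over $\Lambda$ deserves a cleaner statement than the one you gave: since $A \in \add M$ there is an idempotent $e \in \Lambda$ with $\Lambda e \cong M$ as $(\Lambda,A)$-bimodules, whence $\Hom_A(M,DA) \cong DM \cong D(\Lambda e)$ is injective (and projective, as $DA \in \add M$). Applying $\Hom_A(M,-)$ to a minimal injective $A$-resolution $0 \to M \to I^0 \to I^1 \to \cdots$ then yields a complex of projective-injective $\Lambda$-modules whose cohomology in degree $j$ is $\Ext^j_A(M,M)$, hence vanishes for $1 \leq j \leq d-1$; this furnishes the first $d+1$ terms of an injective coresolution of $\Lambda$ inside $\add\Hom_A(M,DA)$. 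Your worry about minimality is a red herring: if \emph{some} injective coresolution has its first $d+1$ terms projective-injective, so does the minimal one, since the latter is a direct summand of the former termwise.
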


\subsection{Tilting complexes and derived equivalences}

Recall that two algebras $A, B$ are derived equivalent if $\mathcal{D}^b(A) \cong \mathcal{D}^b(B)$ as triangulated categories. It follows from Rickard's Morita theory for derived categories 
\cite{Ric89} that two algebras $A$ and $B$ are derived equivalent if and only if there exists a tilting complex $T\in \mathcal{K}^b(\proj A)$ such that $B\cong \End_{\mathcal{D}^b(A)}(T)$. In this case, there is a triangle equivalence $F:\mathcal{D}^b(A)\xrightarrow{\sim} \mathcal{D}^b(B)$ such that $F(T)\cong B$.
We recall the definition of a tilting complex here. 

\begin{Def} (\cite{Ric89})
    Denote by $\mathcal{K}^b(\proj A)$ the homotopy category of  bounded complexes of finitely generated projective modules. A complex $T$ is a tilting complex if 

    \begin{itemize}
        \item [(i)] $T\in \mathcal{K}^b(\proj A)$,
        \item [(ii)] $\Hom(T, T[i]) = 0$ for $i\neq 0$, and
        \item [(iii)] $T$ generates $\mathcal{K}^b(\proj A)$, that is, $\mathcal{K}^b(\proj A)$ is the smallest thick triangulated subcategory of $\mathcal{D}^b(A)$ containing $T$.
    \end{itemize}
\end{Def}
  
\section{$2$-subhomogeneous $d$-representation finite algebras}

\subsection{Definitions and properties}
In this section, we introduce the $2$-subhomogeneous $d$-representation finite algebras. Moreover, we give some examples and a criterion for such algebras.

\begin{Def}\label{def_2_subhomoge_d_rf}
    Let $A$ be a finite dimensional $k$-algebra. We say $A$ is $2$-subhomogeneous $d$-representation finite if $\gldim A = d$ and $A\oplus DA$ is a $d$-cluster tilting module. 
\end{Def}

\begin{Rem} \label{rem_2subhomo_ell_p}
We follow the notations in Theorem \ref{thm_d_rf_alg}. Let $A$ be a finite dimensional algebra with $\gldim A\leq d$. 
    \begin{itemize}
        \item [(i)] The algebra $A$ is $2$-subhomogeneous if and only if $\ell_p\leq 2$ for all $P$.
        \item [(ii)] In \cite{HI11a}, $A$ is called $2$-homogeneous if $\ell_p = 2$ for all $P$. 
        Thus a $2$-homogeneous algebra is $2$-subhomogeneous with the condition that $\add A\cap \add DA = \{0\}$.
    \end{itemize}
\end{Rem}

\begin{Eg}
    \begin{itemize}
        \item [(i)] Such algebras appear in the classification of $d$-representation finite acyclic Nakayama algebras \cite[Theorem 3]{Vas19}. Indeed, they are all $2$-subhomogeneous $d$-representation finite.
        \item [(ii)] A similar result for acyclic higher Nakayama algebras will appear in \cite{Xin25}. More precisely, among all homogeneous acyclic $d$-Nakayama algebras, the $nd$-representation finite ones for some integer $n > 1$ are all $2$-subhomogeneous.
        \item [(iii)] Let $A_i$ be $2$-homogeneous $d_i$-representation finite for $i\in \{1,2\}$ and $k$ be a perfect field. 
        Then $A_1\otimes_k A_2$ is $2$-homogeneous $(d_1+d_2)$-representation finite and thus $2$-subhomogeneous. See \cite{HI11a} and \cite[Section 11]{Soed24} for explicit examples.
    \end{itemize}
\end{Eg}

Now we give an equivalent condition to characterize $2$-subhomogeneous $d$-representation finite algebras. 

\begin{Prop}\label{Prop_condition_2subhomo_d_RF}
   Let $A$ be a finite dimensional $k$-algebra with $\gldim A\leq d$. The following are equivalent.
   \begin{itemize}
       \item [(i)] $A$ is $2$-subhomogeneous $d$-representation finite;
       \item [(ii)] $\nu_d^{-1}(P)\in \add DA$ for all $P$ indecomposable projective non-injective; and 
       \item [(iii)] $\nu_d(I)\in \add A$ for all $I$ indecomposable injective non-projective.
   \end{itemize}
\end{Prop}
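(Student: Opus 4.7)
The plan is to handle $(i)\Leftrightarrow(ii)$ as a direct consequence of Theorem~\ref{thm_d_rf_alg} together with Remark~\ref{rem_2subhomo_ell_p}(i), and then to deduce $(ii)\Leftrightarrow(iii)$ by a bijection argument exploiting that $\nu_d$ is an autoequivalence of $\mathcal{D}^b(A)$. The hardest step will be the cohomological-shift argument below that rules out projective-injective modules from the image of $\nu_d^{-1}$.

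For $(i)\Leftrightarrow(ii)$, I would first observe that in the notation of Theorem~\ref{thm_d_rf_alg} the case $\ell_p=1$ holds precisely when $P\in\add DA$, i.e.\ when $P$ is injective. Hence the condition ``$\ell_p\leq 2$ for every indecomposable projective $P$'' is equivalent to ``$\nu_d^{-1}(P)\in\add DA$ for every indecomposable projective non-injective $P$'', which is exactly (ii). Combined with Remark~\ref{rem_2subhomo_ell_p}(i) this yields $(i)\Rightarrow(ii)$. Conversely, given (ii), one may set $\ell_p=1$ when $P\in\add DA$ and $\ell_p=2$ otherwise; this verifies hypothesis (ii) of Theorem~\ref{thm_d_rf_alg}, so $A$ is $d$-representation finite with $\ell_p\leq 2$, and (i) follows from the same Remark.

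For $(ii)\Leftrightarrow(iii)$, I would start by recording that the Nakayama functor $\nu$ induces a bijection between iso-classes of indecomposable projectives and of indecomposable injectives, and therefore the set of indecomposable projective non-injective modules and the set of indecomposable injective non-projective modules are both finite and of the same cardinality. Assuming (ii), the assignment $P\mapsto\nu_d^{-1}(P)$ sends every indecomposable projective non-injective into $\add DA$. The key step is to show that the image lies in the indecomposable non-projective injectives: if $\nu_d^{-1}(P)\cong Q$ for some projective-injective module $Q$, then $Q$ is in particular projective, so $\nu(Q)$ is a nonzero injective module concentrated in degree $0$, and hence $P\cong\nu_d(Q)=\nu(Q)[-d]$ has cohomology only in degree $d$, contradicting that $P$ is a nonzero module in degree $0$ (using $d\geq 1$). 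Since $\nu_d^{-1}$ is an autoequivalence of $\mathcal{D}^b(A)$, the induced map is injective, and by the matching cardinalities it is a bijection; its inverse $I\mapsto\nu_d(I)$ then witnesses (iii). The implication $(iii)\Rightarrow(ii)$ will be entirely symmetric, applying the same cohomological-shift step to $\nu_d$ acting on indecomposable injective non-projectives (there one uses that $\nu^{-1}$ sends indecomposable injectives to indecomposable projectives concentrated in degree $0$).
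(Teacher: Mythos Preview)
Your proof is correct and follows essentially the same approach as the paper. For $(i)\Leftrightarrow(ii)$ you invoke Theorem~\ref{thm_d_rf_alg} and Remark~\ref{rem_2subhomo_ell_p} exactly as the paper does; for $(ii)\Leftrightarrow(iii)$ your ``cohomological-shift'' step is precisely the paper's observation $\nu_d^{\pm 1}(\add A\cap \add DA)\subset \modn A[\mp d]$, and your counting/bijection argument simply spells out what the paper leaves implicit in the phrase ``thus $(ii)$ and $(iii)$ are equivalent.''
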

\begin{proof}
    Note that $\nu_d$ is an autoequivalence and 
    \begin{equation*}
        \nu_d^{\pm 1}(\add A\cap \add DA) \subset \modn A[\mp d],
    \end{equation*}
    thus $(ii)$ and $(iii)$ are equivalent.
    By Theorem \ref{thm_d_rf_alg} and Remark \ref{rem_2subhomo_ell_p}, we have $(i)$ and $(ii)$ are equivalent.
\end{proof}

\subsection{Construction} \label{section_construction}

We firstly recall the definition of (twisted) fractionally Calabi-Yau algebras. Then we give a construction of $2$-subhomogeneous $d$-representation finite algebras which arise as endomorphism algebras of tilting complexes over a $\frac{d}{a+1}$-Calabi-Yau algebra for some integer $a>0$. We show with an example that not all such algebras can be constructed in this way.

Let $\phi: A\rightarrow A$ be an algebra automorphism. It induces an autoequivalence 
\begin{equation*}
    \phi^{\ast} = -\otimes^{\mathbb{L}}_A {_{\phi}A}: \mathcal{D}^b(A) \rightarrow \mathcal{D}^b(A)
\end{equation*}
where $_{\phi}A$ is the $A\otimes_k A^{op}$-module $A$ with the left action changed to $a\cdot b := \phi(a)b$.

\begin{Def}\cite[Definition 0.3]{HI11a}
    We say that $A$ is twisted fractionally Calabi-Yau (or twisted $\frac{m}{\ell}$-CY) if there exists an isomorphism 
    \begin{equation*}
        \nu^{\ell} \cong [m]\circ \phi^{\ast}
    \end{equation*}
    of functors for some integers $\ell\neq 0$ and $m$ and $\phi$ an algebra automorphism of $A$. 
    When $\phi =id$, we say that $A$ is fractionally Calabi-Yau (or $\frac{m}{\ell}$-CY).
\end{Def}

\begin{Thm}\label{thm_construction_minimal_dRF_alg}
    Let $A$ be a finite dimensional algebra such that $\gldim A < \infty$ and $A$ is $\frac{d}{a+1}$-CY. 
    Suppose $X\in D^b(A)$ is such that $T=\bigoplus_{i=0}^{a-1}\nu^iX$ is a tilting complex. Denote by $B=\End_{D^b(A)}(T)$ the endomorphism algebra of $T$. Then the following statements hold. 
    \begin{itemize}
        \item [(i)] There is a triangle equivalence $F: \mathcal{D}^b(A)\xrightarrow[]{\sim} \mathcal{D}^b(B)$ which restricts to an equivalence between $d\mathbb{Z}$-cluster tilting subcategories $F: \mathcal{U}_d(T)\xrightarrow{\sim}\mathcal{U}_d(B)$.
        \item [(ii)] $B$ is $2$-subhomogeneous $d$-representation finite.
    \end{itemize}
\end{Thm}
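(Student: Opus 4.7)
The plan is to obtain the derived equivalence from Rickard's theorem and then to use the Calabi--Yau relation $\nu^{a+1}\cong[d]$ to translate every question about $B$ into a computation in $\mathcal{D}^b(A)$. Rickard gives a triangle equivalence $F:\mathcal{D}^b(A)\xrightarrow{\sim}\mathcal{D}^b(B)$ with $F(T)\cong B$; because $F$ automatically intertwines the Serre functors, $F\nu_A\cong\nu_B F$ and hence $F\nu^A_d\cong\nu^B_d F$, which immediately restricts $F$ to a bijection $\mathcal{U}_d(T)\to\mathcal{U}_d(B)$.

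Write $P_i:=F(\nu^iX)$ for $i=0,\dots,a-1$, so that $B\cong\bigoplus P_i$. The identity $\nu_BP_i=F(\nu^{i+1}X)=P_{i+1}$ for $i\le a-2$ shows that $P_1,\dots,P_{a-1}$ are projective-injective, while for $i=a-1$ the Calabi--Yau relation $\nu^aX\cong\nu^{-1}X[d]$ gives $F(\nu^{-1}X)\cong I_{a-1}[-d]$, where $I_{a-1}:=\nu_BP_{a-1}$ collects the non-projective indecomposable injectives. Applying $\nu_B$ once more yields the key identity
\[
\nu_B I_{a-1}\;=\;F(\nu^{a+1}X)\;=\;F(X)[d]\;=\;P_0[d].
\]
Because $\gldim B<\infty$ by derived invariance of finite global dimension, Serre duality in $\mathcal{D}^b(B)$ converts this into $\Ext^k_B(I_{a-1},N)\cong D\Ext^{d-k}_B(N,P_0)$ for every $B$-module $N$, which vanishes as soon as $k>d$; hence $\projdim I_{a-1}\le d$. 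Since the remaining injectives are already projective, $\projdim_B DB\le d$, and the standard identity $\gldim B=\projdim_B DB$ for finite-gldim algebras gives $\gldim B\le d$; the choice $N=\soc P_0$ witnesses $\Ext^d_B(I_{a-1},\soc P_0)\neq 0$, so $\gldim B=d$.

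For part~(i), $A$ is $\tau_d$-finite because $\nu_d^{a+1}\cong[-da]$ forces $\tau_d^{a+1}(DA)=H^0(DA[-da])=0$, and Theorem~\ref{thm_Ud(T)_d-ct} now upgrades $\mathcal{U}_d(T)$ to a $d$-cluster-tilting subcategory of $\mathcal{D}^b(A)$. The $d\mathbb{Z}$-vanishing reduces, via the $\nu_d$-invariance of $\mathcal{U}_d(T)$, to showing $\Hom(X,\nu^sX[m])=0$ for $m\notin d\mathbb{Z}$ and every $s\in\mathbb{Z}$; writing $s=(a+1)q+r$ with $r\in\{0,\dots,a\}$, the tilting hypothesis handles $r\le a-1$ directly, and the case $r=a$ is reduced to $r=-1$ via $\nu^a\cong\nu^{-1}[d]$. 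For part~(ii), Proposition~\ref{Prop_condition_2subhomo_d_RF} asks only that $\nu_d^{-1}P\in\add DB$ for every non-injective indecomposable projective $P$; the only such projectives are the summands of $P_0$, and $\nu_d^{-1}P_0=F(\nu^{-1}X[d])=I_{a-1}\in\add DB$.

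The main obstacle, and the crux of the argument, is the identification $F(\nu^{-1}X)\cong I_{a-1}[-d]$ forced by the Calabi--Yau relation: once the interaction of $F$, $\nu$, and the shift has been pinned down on this single object, every homological consequence---the bound $\gldim B\le d$, the projective-injective behaviour of $P_1,\dots,P_{a-1}$, and the $2$-subhomogeneous property---follows formally via Serre duality.
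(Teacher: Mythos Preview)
Your argument is correct and follows essentially the same strategy as the paper: Rickard's theorem plus commutation of Serre functors, the key Calabi--Yau identity $\nu^{a+1}\cong[d]$ (equivalently $\nu_d^{-1}\cong\nu^a$), and Proposition~\ref{Prop_condition_2subhomo_d_RF} for part~(ii). Two points of comparison are worth noting.

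For $\gldim B=d$, the paper works entirely on the $A$-side: it decomposes $\nu_A T=T'\oplus\nu_{A,d}^{-1}X$ with $T'=\bigoplus_{i=1}^{a-1}\nu_A^iX$ and computes $\Hom_{\mathcal{D}^b(B)}(DB,B[i])\cong\Hom_{\mathcal{D}^b(A)}(\nu_AT,T[i])$ directly, using Serre duality in $\mathcal{D}^b(A)$ to handle the $\nu_{A,d}^{-1}X$ summand. Your route via $\nu_BI_{a-1}\cong P_0[d]$ and Serre duality in $\mathcal{D}^b(B)$ is an equivalent repackaging; just be aware that the identity $\gldim B=\projdim_B DB$ you invoke does require $\gldim B<\infty$ as input (which you have), since in general $\projdim_B DB<\infty$ only forces $B$ Gorenstein.

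For the $d\mathbb{Z}$-cluster tilting property, the paper takes a shorter path than your direct Ext computation: once $\mathcal{U}_d(T)=\add\{\nu_A^iX\mid i\in\mathbb{Z}\}$ is known to be $d$-cluster tilting, it is $d\mathbb{Z}$-cluster tilting simply because $[d]\cong\nu_A^{a+1}$ preserves $\mathcal{U}_d(T)$. Your case analysis on $s=(a+1)q+r$ also works, but the reduction ``$r=a$ to $r=-1$'' still needs one application of Serre duality (e.g.\ $\Hom(X,\nu^{-1}X[m])\cong D\Hom(X,X[d-m])$, using $\nu^2X=\nu^{-(a-1)}X[d]$) to close; this is routine but should be made explicit, especially when $a=1$ and $\nu X\notin\add T$.
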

\begin{proof}
    Since $T$ is a tilting complex, there is a triangle equivalence $F: \mathcal{D}^b(A)\rightarrow \mathcal{D}^b(B)$ such that $F(T)\cong B$. 
    We have the following commutative diagram.
    \begin{equation*}
        \xymatrix{
        D^b(A)\ar[r]^F\ar[d]^{\nu_A} & D^b(B)\ar[d]^{\nu_B}\\
        D^b(A)\ar[r]^F & D^b(B).
        }
    \end{equation*}
    Thus $DB\cong \nu_B(B) \cong \nu_B(FT) \cong F\nu_A(T)$. 

    Since finiteness of global dimension is a derived invariant, we have that $\gldim B < \infty$. Now we prove that $\gldim B = d$.
    It suffices to show $\projdim DB = d$.
    
     Let $T'= \bigoplus_{i=1}^{a-1}\nu_A^iX$. We have $T = T'\oplus X$.
    Since $A$ is $\frac{d}{a+1}$-Calabi-Yau, we have 
    \begin{equation*}
        \nu_{A,d}^{-1} =  \nu_A^{-1}[d] \cong \nu_A^{-1} \nu_A^{a+1} = \nu_A^a.
    \end{equation*}
    Thus $\nu_A T=\bigoplus_{i=1}^a\nu_A^iX = T'\oplus \nu_{A,d}^{-1}X$. 
    So we have
    
    \begin{align*}
        \Hom_{D^b(B)}(DB, B[i]) & \cong \Hom_{D^b(B)}(F\nu_A T, FT[i]) \\
        & \cong \Hom_{D^b(A)}(\nu_A T, T[i])\\
        & = \Hom_{D^b(A)}(T'\oplus \nu_{A,d}^{-1}X, T[i])\\
        & = \Hom_{D^b(A)}(T', T[i])\oplus \Hom_{D^b(A)}(\nu_{A,d}^{-1}X, T[i])\\
        & \cong \Hom_{D^b(A)}(T', T[i])\oplus \Hom_{D^b(A)}(X, \nu_AT[i-d])\\
        & \cong \Hom_{D^b(A)}(T', T[i])\oplus D\Hom_{D^b(A)}(T, X[d-i])\\
        & =\left\{\begin{array}{cc}
           \Hom_{D^b(A)}(T', T)  & i=0 \\
           D\Hom_{D^b(A)}(T, X)  & i=d\\
           0 & i\neq 0, d.
        \end{array}\right.
    \end{align*}
    This implies $\projdim DB = d$ and therefore $\gldim B = d$.

    By Theorem \ref{thm_Ud(T)_d-ct},
    \begin{equation*}
        \mathcal{U}_d(T) = \add\{\nu_{A,d}^i(T)\mid i\in \mathbb{Z}\} = \add\{\nu_A^i(X)\mid i\in \mathbb{Z}\} \subset \mathcal{D}^b(A)
    \end{equation*}
    is $d$-cluster tilting. 
    Since $\nu_A^{a+1}\cong [d]$ as $A$ is $\frac{d}{a+1}$-Calabi-Yau, we have $\mathcal{U}_d(T)[d]\subset \mathcal{U}_d(T)$ which implies $\mathcal{U}_d(T)$ is $d\mathbb{Z}$-cluster tilting.
    Moreover $F: \mathcal{U}_d(T)\xrightarrow{\sim} \mathcal{U}_d(B)$ since $F(T)\cong B$. Statement (i) follows.
    
    Since $T'\in \add \nu_AT$, we get $FT'\in \add DB$, which implies that all indecomposable projective non-injective $B$-modules lie in $\add FX$. Now $\nu_{A,d}^{-1}X\cong \nu_A^aX\in \add \nu_A T$ implies that 
    \begin{equation*}
        \nu_{B,d}^{-1}FX \cong F(\nu_A^a X) \in \add DB.
    \end{equation*}
    Therefore by Proposition \ref{Prop_condition_2subhomo_d_RF}, $B$ is $2$-subhomogeneous $d$-representation finite. 
\end{proof}

\begin{Rem}
    Indeed, $\mathcal{U}_d(T)$ has a $(d+2)$-angulated structure by \cite[Theorem 1]{GKO13} essentially induced by the triangulated structure of $\mathcal{D}^b(A)$. Moreover the restriction 
    \begin{equation*}
        F: \mathcal{U}_d(T) \xrightarrow{\sim} \mathcal{U}_d(B)
    \end{equation*}
    is an equivalence of $(d+2)$-angulated categories.
\end{Rem}

\begin{Rem} Fixing notations as above, we make some further observations.
     The same construction applies if $A$ is twisted $\frac{d}{a+1}$-CY and $\phi^{\ast}(X)\cong X$. In this case, 
     \begin{equation*}
         \nu_{A,d}^{-1} \circ \phi^{\ast} = \nu_A^{-1}[d]\circ \phi^{\ast} \cong \nu_A^{-1}\circ \nu_A^{a+1}  = \nu_A^a,
     \end{equation*}
     and
     \begin{equation*}
         \nu_AT = \bigoplus_{i=1}^a\nu_A^iX = T'\oplus \nu_{A,d}^{-1}\circ \phi^{\ast}(X) \cong T'\oplus \nu_{A,d}^{-1}(X).
     \end{equation*}
\end{Rem}

Next we describe the endomorphism algebra $B$. 
For simplicity, we assume $X$ is basic. 
Write $B_i = \Hom_{\mathcal{D}^b(A)}(X, \nu_A^iX)$. 
By Serre duality, $\Hom_{\mathcal{D}^b(A)}(\nu_A^iX, X) \cong DB_{i+1}$ and $B_1\cong DB_0$. 
Thus
\begin{equation*}
    B\cong 
    \begin{pmatrix}
        B_0 & B_1 & \cdots & B_{a-1}\\
        DB_2 & B_0 & \cdots & B_{a-2}\\
        \vdots & \vdots & \ddots & \vdots\\
        DB_a & DB_{a-1} & \cdots & B_0
    \end{pmatrix}
    \cong
    \begin{pmatrix}
        B_0 & DB_0 & \cdots & B_{a-1}\\
        DB_2 & B_0 & \cdots & B_{a-2}\\
        \vdots & \vdots & \ddots & \vdots\\
        DB_a & DB_{a-1} & \cdots & B_0
    \end{pmatrix}.
\end{equation*}

Since $\add T\cap \add \nu_A T = \add \bigoplus_{i=1}^{a-1}\nu_A^iX$, we have that $F\nu_A^{a}X$ is the direct sum of all indecomposable injective non-projective $B$-modules. 
As $B$ is $2$-subhomogeneous $d$-representation finite, 
\begin{equation*}
    N = B\oplus F\nu_A^aX \cong F\left(\bigoplus_{i=0}^a\nu_A^iX\right)
\end{equation*}
is the basic $d$-cluster tilting module. 
Denote by $\Lambda=\End_B(N)$ the $d$-Auslander algebra of $B$.
Then 
\begin{equation*}
    \Lambda \cong \End_{\mathcal{D}^b(A)}\left(\bigoplus_{i=0}^a \nu_A^iX\right).
\end{equation*}
A similar argument as for $B$ gives 
\begin{equation*}
    \Lambda \cong 
    \begin{pmatrix}
        B_0 & DB_0 & \cdots & B_{a}\\
        DB_2 & B_0 & \cdots & B_{a-1}\\
        \vdots & \vdots & \ddots & \vdots\\
        DB_{a+1} & DB_{a} & \cdots & B_{0}
    \end{pmatrix}.
\end{equation*}

\begin{Eg}\label{example_linear_A4}
    Let $A = kQ^{4,1}$ be the path algebra of the quiver 
    $Q^{4,1}: \xymatrix@C=1em@R=1em{1\ar[r] & 2\ar[r] & 3\ar[r] & 4}$. We have $\gldim A = 1$ and $A$ is $\frac{3}{5}$-Calabi-Yau. Denote by $P_i$ the indecomposable projective module at vertex $i$. Let 
    \begin{equation*}
        T = \bigoplus_{i=0}^3 \nu^iP_1 = P_1\oplus P_4\oplus (\xymatrix@R=.8em@C=.8em{P_3\ar@{^{(}->}[r] & P_4})\oplus (\xymatrix@R=.8em@C=.8em{P_2\ar@{^{(}->}[r] & P_3})[1].
    \end{equation*}
    It can be checked directly that $\Hom_{\mathcal{D}^b(A)}(T, T[i]) = 0$ for $i\neq 0$.
    Moreover 
    \begin{align*}
        P_3 & = \cone (P_4 \rightarrow (\xymatrix@R=.8em@C=.8em{P_3\ar@{^{(}->}[r] & P_4}))[-1]\\
        P_2 & = \cone (P_3 \rightarrow (\xymatrix@R=.8em@C=.8em{P_2\ar@{^{(}->}[r] & P_3}))[-1]
    \end{align*}
    which implies $\thick\langle T\rangle = \mathcal{K}^b(\proj A)$. 
    Therefore $T$ is a tilting complex.
    So by Theorem \ref{thm_construction_minimal_dRF_alg}, we conclude that the endomorphism algebra $B = \End_{\mathcal{D}^b(A)}(T)$ is $2$-subhomogeneous $3$-representation finite. Indeed $B\cong A/\rad^2(A)$ is the Koszul dual of $A$.
\end{Eg}

\begin{Eg}\label{example_A_4_2}
    Let $\Lambda=kQ^{4,2}/I_{4,2}$ be the Auslander algebra of $A$ in Example \ref{example_linear_A4}. 
    More precisely, $Q^{4,2}$ is given as follows. 
     The admissible ideal $I_{4,2}$ is generated by commutative relations of squares (blue dashed line) and zero relations of half squares (red dashed line).

    \begin{equation*}
        \begin{xy}
            0;<26pt,0cm>:<20pt,20pt>::
            (0,0) *+{12} ="12",
            (0,1) *+{13} ="13",
            (0,2) *+{14} ="14",
            (0,3) *+{15} ="15",
            (1.5,0) *+{23} ="23",
            (1.5,1) *+{24} ="24",
            (1.5,2) *+{25} ="25",
            (3,0) *+{34} ="34",
            (3,1) *+{35} ="35",
            (4.5,0) *+{45} ="45",
            "12", {\ar "13"},
            "13", {\ar "14"},
            "14", {\ar "15"},
            "13", {\ar "23"},
            "14", {\ar "24"},
            "15", {\ar "25"},
            "23", {\ar "24"},
            "24", {\ar "25"},
            "24", {\ar "34"},
            "25", {\ar "35"},
            "34", {\ar "35"},
            "35", {\ar "45"},
            "13", {\ar@[blue]@{--} "24"},
            "14", {\ar@[blue]@{--} "25"},
            "24", {\ar@[blue]@{--} "35"},
            "12", {\ar@[red]@{--} "23"},
            "23", {\ar@[red]@{--} "34"},
            "34", {\ar@[red]@{--} "45"},
        \end{xy}
    \end{equation*}

    We have that $\gldim \Lambda = 2$ and $\Lambda$ is $\frac{6}{6}$-Calabi-Yau, see \cite[Remark 2.29]{DJW19} \cite[Theorem 6.21]{Gra23}. 
    Let $T =\bigoplus_{i=0}^4 \nu^i (P_{12}\oplus P_{13})$. It can be checked directly that $T$ is a tilting complex and the endomorphism algebra $\Gamma = \End_{\mathcal{D}^b(\Lambda)}(T) \cong kQ^{10,1}/\rad^3(kQ^{10,1})$ where 
    \begin{equation*}
        Q^{10,1}: \xymatrix@C=.8em@R=.8em{1\ar[r] & 2\ar[r] & 3\ar[r] & \cdots\ar[r] & 9\ar[r] & 10}.
    \end{equation*}
    By Theorem \ref{thm_construction_minimal_dRF_alg}, $\Gamma$ is $2$-subhomogeneous $6$-representation finite. Note that $\Gamma$ also appears in \cite[Theorem 3]{Vas19}.
\end{Eg}

\begin{Rem}
    \begin{itemize}
        \item [(i)] The tilting complex in Example \ref{example_linear_A4} appears implicitly in the classification of $n\mathbb{Z}$-cluster tilting subcategories for a self-injective Nakayama algebra \cite{HKV25}.
        \item [(ii)] The derived equivalence between $\Lambda$ and $\Gamma$ in Example \ref{example_A_4_2} was obtained in \cite[Corollary 1.2, Corollary 1.13]{Lad12}. Here we give an explicit description of a tilting complex which induces such a derived equivalence.
    \end{itemize}
\end{Rem}

We give an example here to show that not all $2$-subhomogeneous $d$-representation finite algebras can be obtained via our construction.
\begin{Eg} 
Let $A = kQ/I$ where $Q$ is given below and $I$ is generated by commutative relations of squares (blue dashed lines) and zero relations of half squares (red dashed lines).
It is shown in \cite{Xin25} that $\gldim A = 6$ and $A\oplus DA$ is a $6$-cluster tilting module. In other words, $A$ is $2$-subhomogeneous $6$-representation finite. 
But it is not possible to write $A=\bigoplus_{i=0}^{N-1}\nu^i P$ where $P$ is the direct sum of projective non-injective modules. Since $P = P_1\oplus P_2\oplus P_4$, the only option is $N=4$. But one can check that $\nu^3 P_4$ is not projective.
    \begin{equation*}
        \begin{xy}
            0;<20pt,0cm>:<20pt,20pt>::
            (0,0) *+{1} ="12",
            (0,1) *+{2} ="13",
            (0,2) *+{3} ="14",
            (2,0) *+{4} ="23",
            (2,1) *+{5} ="24",
            (2,2) *+{6} ="25",
            (4,0) *+{7} ="34",
            (4,1) *+{8} ="35",
            (4,2) *+{9} ="36",
            (6,0) *+{10} ="45",
            (6,1) *+{11} ="46",
            (8,0) *+{12} ="56",
            "12", {\ar "13"},
            "13", {\ar "14"},
            "23", {\ar "24"},
            "24", {\ar "25"},
            "34", {\ar "35"},
            "35", {\ar "36"},
            "45", {\ar "46"},
             "13", {\ar "23"},
             "14", {\ar "24"},
             "24", {\ar "34"},
             "25", {\ar "35"},
             "35", {\ar "45"},
             "36", {\ar "46"},
             "46", {\ar "56"},
             "12", {\ar@[red]@{--} "23"},
             "14", {\ar@[red]@{--} "25"},
             "23", {\ar@[red]@{--} "34"},
             "25", {\ar@[red]@{--} "36"},
             "34", {\ar@[red]@{--} "45"},
             "45", {\ar@[red]@{--} "56"},
             "13", {\ar@[blue]@{--} "24"},
             "24", {\ar@[blue]@{--} "35"},
             "35", {\ar@[blue]@{--} "46"},
        \end{xy}
    \end{equation*}
\end{Eg}

\section{A derived equivalence for higher Auslander algebras of type A}

\subsection{Higher Auslander algebras of type A}
In this section, we give a description of higher Auslander algebras of type $\mathbb{A}$ by quivers with relations. We state the main theorem and illustrate it with an example.

We recall the definition of ordered sequences $(\os_n^d, \preccurlyeq)$ from \cite{JK19}.
\begin{equation*}
        \os_n^d := \{x = (x_1, x_2, \ldots, x_d)  \mid 1\leq x_1 < x_2 < \cdots < x_d \leq n+d-1\},
\end{equation*}
with the relation $\preccurlyeq$ defined as $x \preccurlyeq y$ if $x_1 \leq y_1 < x_2\leq y_2 < \cdots < x_d \leq y_d$ for $x=(x_1, \ldots, x_d)$, $y = (y_1, \ldots, y_d) \in \os_n^d$. 

Now we describe the $d$-Auslander algebras of type $\mathbb{A}$ by quiver with relations.
The vertex set of the quiver $Q^{n,d}$ is given by $\os_n^d$.
Let $\{e_i\mid 1\leq i\leq d\}$ be the standard basis of $\mathbb{Z}^d$.
There is an arrow $a_i(x) : x\rightarrow x + e_i$ whenever $x + e_i \in \os_n^d$.
Let $I_{n,d}$ be the ideal of the path category $kQ^{n,d}$ generated by  $a_i(x + e_j)a_j(x) - a_j(x + e_i)a_i(x)$ with $1\leq i,j\leq d$. 
By convention, $a_i(x) = 0$ whenever $x$ or $x+e_i$ is not in  $\os_n^d$, hence some of the relations are indeed zero relations.
Then the $(d-1)$-Auslander algebra of type $\mathbb{A}_n$ is given by $A_n^d = kQ^{n,d} / I_{n,d}$.

\begin{Rem}
        By definition $A_n^d$ is a locally bounded $k$-linear category.
        By abuse of notation,
        we still call it an algebra.
        We also identify $k$-linear categories with finitely many objects and algebras.
\end{Rem}
By construction in \cite{Iya11}, $A_n^d$ has global dimension $d$ and a distinguished $d$-cluster tilting subcategory  $$\mathcal{M}_n^d = \add \{M(x) \mid x\in \os_n^{d+1}\}.$$
Here as a representation $M(x)$ assigns to vertex $z\in \os_n^{d}$ the vector space
\begin{equation*}
    M(x)_z \cong \left\{ \begin{array}{ll}
        k &  \text{ if } (x_1, \ldots, x_d) \preccurlyeq z \preccurlyeq (x_2-1, \ldots, x_{d+1}-1)\\
        0 & \text{ otherwise.}
    \end{array} 
    \right.
\end{equation*}

Then
$$\Hom_{A_n^d}(M(x),M(y)) \cong \left\{ \begin{array}{ll}
        kf_{yx} &  x \preccurlyeq y\\
        0 & \text{ otherwise.}
    \end{array} 
    \right.$$
Here $f_{yx}$ is given by $k\xrightarrow{1} k$ at vertices $z$ where $M(x)_z = M(y)_z = k$ and $0$ otherwise.
The composition of morphisms in $\mathcal{M}_n^d$ is completely determined by
\begin{equation*}
    f_{zy}\circ f_{yx} =
    \left\{ \begin{array}{ll}
        f_{zx} &  x \preccurlyeq z\\
        0 & \text{ otherwise.}
    \end{array} 
    \right.
\end{equation*}

In fact, $A_n^{d+1}$ is the $d$-Auslander algebra of $A_n^d$.

We collect some properties of $\mathcal{M}_n^d$ in the following property.
\begin{Prop}(\cite[Theorem 3.6, Proposition 3.17, Proposition 3.19]{OT12}, \cite[Remark 2.29]{DJW19}, \cite[Theorem 6.2]{Gra23}) Let $A=A_n^d$. The following properties hold. \label{prop_A_n_d}
    \begin{itemize}
        \item [(i)] $M(x_1, \ldots, x_{d+1})$ is projective if  and only if $x_1=1$ and injective if and only if  $x_{d+1} = n+d$. 
        \item [(ii)] We have $\soc M(x_1, \ldots, x_{d+1}) = S_{(x_1, \ldots, x_d)}$ and $\top M(x_1, \ldots, x_{d+1}) = S_{(x_2-1, \ldots, x_{d+1}-1)}$ where $S_y$ denotes the simple module at vertex $y\in \os_{n}^d$.
        \item [(ii)] If $M(x)$ is not projective then $\tau_dM(x) = M(\tau_d x)$ where $\tau_d (x_1, \ldots, x_{d+1}) = (x_1-1, \ldots, x_{d+1}-1).$
        \item [(iii)] 
            \begin{equation*}
                \Ext_{A_n^d}^i(M(x),M(y)) \cong \left\{ \begin{array}{ll}
                kg_{yx} &  y \preccurlyeq \tau_d(x) \text{ and } i=d\\
                0 & \text{ otherwise.}
                \end{array} \right.
            \end{equation*}
        \item [(iv)] Given $1\leq x_1<x_2\cdots<x_{d+1}<x_{d+2}\leq n+d$, we have the following $(d+2)$-exact sequence, which will be called a  minimal $(d+2)$-exact sequence.
        \begin{equation*}
            \xymatrix@C=1em@R=1em{
            0\ar[r] & M(x_1, \ldots, x_{d+1})\ar[r] & \cdots\ar[r] & M(x_1,\ldots,x_{i-1},x_{i+1},\ldots,x_{d+2})\ar[r] & M(x_2,\ldots,x_{d+2})\ar[r] & 0.  
            }
        \end{equation*}
        In particular, when $x_1=1$, such a sequence gives a minimal projective resolution of $M(x_2, \ldots, x_{d+2})$.
        \item [(iv)] $A$ is $\frac{(n-1)d}{n+d}$-CY.
    \end{itemize} 
\end{Prop}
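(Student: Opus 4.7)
The plan is to read off all items from the explicit combinatorial model: each indecomposable $M(x)$ with $x\in\os_n^{d+1}$ is the thin representation of $Q^{n,d}$ supported on the interval
\[
[(x_1,\ldots,x_d),\,(x_2-1,\ldots,x_{d+1}-1)]\subset \os_n^d,
\]
with one-dimensional stalks and identity maps along every arrow. From this description, parts (i) and (ii) are immediate: the socle and top of $M(x)$ are the unique minimum and maximum of the support interval, and one verifies that the indecomposable projective at $y=(y_1,\ldots,y_d)$ coincides with $M(1,y_1+1,\ldots,y_d+1)$ because the commutative-square relations in $I_{n,d}$ force projective covers to have exactly this interval support. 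The identification of injectives is dual, via the involution $x\mapsto (n+d+1)-x$ on $\os_n^d$.

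For (iii) and (iv) I would construct the minimal $(d+2)$-exact sequence first, since everything else drops out of it. Using the canonical maps $f_{\widehat{x}_{i+1}\widehat{x}_i}$ between the modules $M(\widehat{x}_i)$ obtained by deleting the $i$-th entry of $(x_1,\ldots,x_{d+2})$, with alternating signs, I obtain a candidate complex whose exactness reduces to a pointwise check at each $z\in\os_n^d$. At each vertex the complex specialises to an elementary simplicial acyclic sequence of one-dimensional vector spaces; this is the main (and only) bookkeeping step of the whole argument. When $x_1=1$ this sequence becomes a minimal projective resolution, so $\projdim M(x_2,\ldots,x_{d+2})=d$ and hence $\gldim A_n^d = d$. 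Applying the Nakayama functor term by term to such a resolution then extracts the $d$-AR translation formula $\tau_d M(x) = M(x-(1,\ldots,1))$ whenever $x_1\geq 2$.

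The $\Ext$ formula follows from the $d$-cluster tilting property, which kills $\Ext^i$ for $1\leq i\leq d-1$, together with Serre duality
\[
\Ext_A^d(M(x),M(y)) \cong D\Hom_A(M(y),M(\tau_d x)),
\]
combined with the $\Hom$-formula recalled above, giving a one-dimensional space exactly when $y\preccurlyeq \tau_d x$. The fractional Calabi--Yau property is obtained by computing $\nu^{n+d}$ directly on indecomposable projectives using the $\tau_d$-formula above and tracking both the combinatorial orbit on $\os_n^{d+1}$ and the accumulated homological shifts picked up each time an iterate leaves $\modn A$ through an injective. The only real obstacle I anticipate is the sign and support bookkeeping in the $(d+2)$-exact sequence; once that is settled, all the remaining parts follow mechanically.
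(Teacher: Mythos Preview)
The paper does not prove this proposition; it assembles the statements from \cite{OT12}, \cite{DJW19}, and \cite{Gra23} without argument. Your plan for the socle/top description, the projectivity/injectivity criteria, the minimal $(d+2)$-exact sequence, the $\tau_d$-formula, and the $\Ext$-computation is correct and is essentially the route taken in \cite{OT12}: work with the explicit thin interval representations, verify exactness of the candidate complex vertex by vertex, read off $\tau_d$ by applying $\nu$ termwise to the resulting projective resolution, and obtain $\Ext^d$ from Auslander--Reiten duality together with the known $\Hom$-formula.

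For the fractional Calabi--Yau statement your plan has a genuine gap. Tracking $\nu^{n+d}$ on indecomposable projectives and counting accumulated shifts shows only that $\nu^{n+d}(P)\cong P[(n-1)d]$ object by object; this yields at best \emph{twisted} $\tfrac{(n-1)d}{n+d}$-Calabi--Yau, i.e.\ $\nu^{n+d}\cong\phi^{\ast}\circ[(n-1)d]$ for some automorphism $\phi$ of $A$. Upgrading to an honest isomorphism of functors $\nu^{n+d}\cong[(n-1)d]$---equivalently, an isomorphism $(DA)^{\otimes_A^{\mathbb{L}}(n+d)}\cong A[(n-1)d]$ in $\mathcal{D}(A\otimes_k A^{op})$---requires either an explicit bimodule argument or a proof that the twist $\phi$ is inner. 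This is exactly what \cite[Remark 2.29]{DJW19} and \cite{Gra23} supply, and it is not recoverable from the orbit computation you describe.
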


    \begin{Eg}
     The quiver $Q^{5,3}$ of $A=A^{3}_{5}$ is given as follows. The ideal $I_{5,3}$ is generated by the commutativity relations of squares (blue dashed line) and the zero relations of half squares (red dashed line). Most labels and relations are omitted for simplicity.
     
    \begin{equation*}
    {\tiny
    \begin{xy}
    0;<20pt,0cm>:<15pt,20pt>:: 
    (0,0) *+{123} ="123",
    (0,1) *+{124} ="124",
    (0,2) *+{125} ="125",
    (0,3) *+{\bullet} ="126",
    (0,4) *+{\bullet} ="127",
    (1,1) *+{134} ="134",
    (1,2) *+{135} ="135",
    (1,3) *+{\bullet} ="136",
    (1,4) *+{\bullet} ="137",
    (2,2) *+{\bullet} ="145",
    (2,3) *+{\bullet} ="146",
    (2,4) *+{\bullet} ="147",
    (3,3) *+{\bullet} ="156",
    (3,4) *+{\bullet} ="157",
    (4,4) *+{\bullet} ="167",
    (4.5,0) *+{\bullet} ="234",
    (4.5,1) *+{\bullet} ="235",
    (4.5,2) *+{\bullet} ="236",
    (4.5,3) *+{\bullet} ="237",
    (5.5,1) *+{\bullet} ="245",
    (5.5,2) *+{\bullet} ="246",
    (5.5,3) *+{\bullet} ="247",
    (6.5,2) *+{\bullet} ="256",
    (6.5,3) *+{\bullet} ="257",
    (7.5,3) *+{\bullet} ="267",
    (8,0) *+{\bullet} ="345",
    (8,1) *+{\bullet} ="346",
    (8,2) *+{\bullet} ="347",
    (9,1) *+{\bullet} ="356",
    (9,2) *+{\bullet} ="357",
    (10,2) *+{\bullet} ="367",
    (11,0) *+{\bullet} ="456",
    (11,1) *+{\bullet} ="457",
    (12,1) *+{\bullet} ="467",
    (14,0) *+{\bullet} ="567",
    "123", {\ar"124"},
    "124", {\ar"125"},
    "125", {\ar"126"},
    "126", {\ar"127"},
    "124", {\ar"134"},
    "125", {\ar"135"},
    "126", {\ar"136"},
    "127", {\ar"137"},
    "134", {\ar"135"},
    "135", {\ar"136"},
    "136", {\ar"137"},
    "135", {\ar"145"},
    "136", {\ar"146"},
    "137", {\ar"147"},
    "134", {\ar"234"},
    "135", {\ar"235"},
    "136", {\ar"236"},
    "137", {\ar"237"},
    "145", {\ar"146"},
    "146", {\ar"147"},
    "146", {\ar"156"},
    "147", {\ar"157"},
    "145", {\ar"245"},
    "146", {\ar"246"},
    "147", {\ar"247"},
    "156", {\ar"157"},
    "157", {\ar"167"},
    "156", {\ar"256"},
    "157", {\ar"257"},
    "167", {\ar"267"},
    "234", {\ar"235"},
    "235", {\ar"236"},
    "236", {\ar"237"},
    "235", {\ar"245"},
    "236", {\ar"246"},
    "237", {\ar"247"},
    "245", {\ar"246"},
    "246", {\ar"247"},
    "246", {\ar"256"},
    "247", {\ar"257"},
    "245", {\ar"345"},
    "246", {\ar"346"},
    "247", {\ar"347"},
    "256", {\ar"257"},
    "257", {\ar"267"},
    "256", {\ar"356"},
    "257", {\ar"357"},
    "267", {\ar"367"},
    "345", {\ar"346"},
    "346", {\ar"347"},
    "356", {\ar"357"},
    "346", {\ar"356"},
    "347", {\ar"357"},
    "357", {\ar"367"},
    "356", {\ar"456"},
    "357", {\ar"457"},
    "367", {\ar"467"},
    "456", {\ar"457"},
    "457", {\ar"467"},
    "467", {\ar"567"},
    "123", {\ar@[red]@{--}"134"},
    "124", {\ar@[red]@{--}"234"},
    "124", {\ar@[blue]@{--}"135"},
    \end{xy}}
    \end{equation*}
    \end{Eg}

    \begin{Eg} Let $x=(1247)\in \os_5^4$. We have the module $M(x)$ as follows. 
        \begin{equation*}
    {\tiny
    \begin{xy}
    0;<15pt,0cm>:<15pt,20pt>:: 
    (0,0) *+{0} ="123",
    (0,1) *+{1} ="124",
    (0,2) *+{1} ="125",
    (0,3) *+{1} ="126",
    (0,4) *+{0} ="127",
    (1,1) *+{1} ="134",
    (1,2) *+{1} ="135",
    (1,3) *+{1} ="136",
    (1,4) *+{0} ="137",
    (2,2) *+{0} ="145",
    (2,3) *+{0} ="146",
    (2,4) *+{0} ="147",
    (3,3) *+{0} ="156",
    (3,4) *+{0} ="157",
    (4,4) *+{0} ="167",
    (4.5,0) *+{0} ="234",
    (4.5,1) *+{0} ="235",
    (4.5,2) *+{0} ="236",
    (4.5,3) *+{0} ="237",
    (5.5,1) *+{0} ="245",
    (5.5,2) *+{0} ="246",
    (5.5,3) *+{0} ="247",
    (6.5,2) *+{0} ="256",
    (6.5,3) *+{0} ="257",
    (7.5,3) *+{0} ="267",
    (8,0) *+{0} ="345",
    (8,1) *+{0} ="346",
    (8,2) *+{0} ="347",
    (9,1) *+{0} ="356",
    (9,2) *+{0} ="357",
    (10,2) *+{0} ="367",
    (11,0) *+{0} ="456",
    (11,1) *+{0} ="457",
    (12,1) *+{0} ="467",
    (14,0) *+{0} ="567",
    "124", {\ar"123"},
    "125", {\ar"124"},
    "126", {\ar"125"},
    "127", {\ar"126"},
    "134", {\ar"124"},
    "135", {\ar"125"},
    "136", {\ar"126"},
    "137", {\ar"127"},
    "135", {\ar"134"},
    "136", {\ar"135"},
    "137", {\ar"136"},
    "145", {\ar"135"},
    "146", {\ar"136"},
    "147", {\ar"137"},
    "234", {\ar"134"},
    "235", {\ar"135"},
    "236", {\ar"136"},
    "237", {\ar"137"},
    "146", {\ar"145"},
    "147", {\ar"146"},
    "156", {\ar"146"},
    "157", {\ar"147"},
    "245", {\ar"145"},
    "246", {\ar"146"},
    "247", {\ar"147"},
    "157", {\ar"156"},
    "167", {\ar"157"},
    "256", {\ar"156"},
    "257", {\ar"157"},
    "267", {\ar"167"},
    "235", {\ar"234"},
    "236", {\ar"235"},
    "237", {\ar"236"},
    "245", {\ar"235"},
    "246", {\ar"236"},
    "247", {\ar"237"},
    "246", {\ar"245"},
    "247", {\ar"246"},
    "256", {\ar"246"},
    "257", {\ar"247"},
    "345", {\ar"245"},
    "346", {\ar"246"},
    "347", {\ar"247"},
    "257", {\ar"256"},
    "267", {\ar"257"},
    "356", {\ar"256"},
    "357", {\ar"257"},
    "367", {\ar"267"},
    "346", {\ar"345"},
    "347", {\ar"346"},
    "357", {\ar"356"},
    "356", {\ar"346"},
    "357", {\ar"347"},
    "367", {\ar"357"},
    "456", {\ar"356"},
    "457", {\ar"357"},
    "467", {\ar"367"},
    "457", {\ar"456"},
    "467", {\ar"457"},
    "567", {\ar"467"},
    %
    \end{xy}}
    \end{equation*}
    \end{Eg}

 We state the main theorem and illustrate it with an example. The proof will be given in the next section.
    \begin{Thm}\label{thm_main_construction_higher_auslander_alg_typeA}
        Let $A = A_{n+1}^d$ be the $(d-1)$-Auslander algebra of type $\mathbb{A}_{n+1}$ with $\gcd(n, d) = 1$. There is a tilting complex $T = \bigoplus_{i=1}^{n+d}\nu^iP$ where $P$ is the basic projective module with $|P| = \frac{1}{n+d}\binom{n+d}{d}$ defined in Definition \ref{def_the_proj_module}. Denote by $B = \End_{\mathcal{D}^b(A)}(T)$ the endomorphism algebra of $T$. Then the following statements hold.  
        \begin{itemize}
            \item [(i)] There is a triangle equivalence $F: \mathcal{D}^b(A) \xrightarrow[]{\sim} \mathcal{D}^b(B)$ which restricts to an equivalence between the $nd\mathbb{Z}$-cluster tilting subcategories $F: \mathcal{U}_{nd}(T)\xrightarrow{\sim} \mathcal{U}_{nd}(B)$.
            \item [(ii)] $B$ is $2$-subhomogeneous $nd$-representation finite, i.e. $\gldim B = nd$ and $B\oplus DB$ is an $nd\mathbb{Z}$-cluster tilting module.
            \item [(iii)] $B$ is $\frac{nd}{n+d+1}$-CY.
        \end{itemize}
    \end{Thm}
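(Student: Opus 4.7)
The plan is to reduce parts (i) and (ii) to Theorem \ref{thm_construction_minimal_dRF_alg} by verifying that $T = \bigoplus_{i=1}^{n+d}\nu^iP$ is a tilting complex, and then to deduce (iii) from the derived invariance of the fractional Calabi-Yau property. By Proposition \ref{prop_A_n_d}(iv), applied with $n+1$ in place of $n$, the algebra $A = A_{n+1}^d$ is $\frac{nd}{n+d+1}$-Calabi-Yau, so $\nu^{n+d+1} \cong [nd]$ on $\mathcal{D}^b(A)$. Setting $a = n+d$ and $X = \nu P$, we have $T = \bigoplus_{i=0}^{a-1}\nu^i X$, which matches the hypothesis of Theorem \ref{thm_construction_minimal_dRF_alg} with the $d$ there taken to be $nd$.

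To verify that $T$ is a tilting complex, three conditions must be checked. The perfection $T \in \mathcal{K}^b(\proj A)$ is automatic since $\gldim A = d < \infty$ and $\nu$ preserves perfect complexes. For the Ext-vanishing $\Hom_{\mathcal{D}^b(A)}(T, T[i]) = 0$ with $i \neq 0$, I reduce via the autoequivalence $\nu$ to showing $\Hom_{\mathcal{D}^b(A)}(P, \nu^\ell P[i]) = 0$ for $\ell \in \{-(n+d-1), \ldots, n+d-1\}$ and $i \neq 0$; the case $\ell = 0$ is immediate from projectivity of $P$, while for $\ell \neq 0$ one must locate $\nu^\ell P$ explicitly inside $\mathcal{D}^b(A)$ by writing $\nu = \nu_d[d]$ (so that $\nu_d$ corresponds to $\tau_d$ on modules in $\mathcal{M}_{n+1}^{d-1}$) and then using Proposition \ref{prop_A_n_d}(iii) to compute Ext-groups within the $d$-cluster tilting subcategory. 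For the generation $\thick\langle T \rangle = \mathcal{K}^b(\proj A)$, I expect to recover every indecomposable projective $A$-module as an iterated cone over summands of $T$ by repeatedly invoking the minimal $(d+2)$-exact sequences of Proposition \ref{prop_A_n_d}(iv) along the $\nu$-orbit of $P$.

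The main obstacle is the combinatorial bookkeeping underpinning both the Ext-vanishing and the generation. One must understand $\nu^\ell P$ precisely for each $\ell \in \{1,\ldots,n+d\}$ as an explicit shifted module, or equivalently as a specific object in the $nd\mathbb{Z}$-cluster tilting subcategory $\mathcal{U}_{nd}(A)$, and this requires a careful analysis of the combinatorics of the labelling set $\os_{n+1}^d$. The coprimality hypothesis $\gcd(n,d) = 1$ enters precisely here: it ensures that the $\nu$-orbit of $P$ has period exactly $n+d+1$ on indecomposable summands, so that the $n+d$ summands of $T$ are pairwise non-isomorphic and jointly distribute over the orbit in a way that exhausts $\proj A$ through the exact sequences of Proposition \ref{prop_A_n_d}(iv). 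The specific choice of $P$ in Definition \ref{def_the_proj_module}, together with the normalization $|P| = \frac{1}{n+d}\binom{n+d}{d}$, should be engineered precisely to make these calculations balance.

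Once $T$ is shown to be tilting, parts (i) and (ii) follow directly from Theorem \ref{thm_construction_minimal_dRF_alg}. For (iii), any triangle equivalence $F\colon \mathcal{D}^b(A) \xrightarrow{\sim} \mathcal{D}^b(B)$ intertwines the Serre functors, $F \circ \nu_A \cong \nu_B \circ F$, so the relation $\nu_A^{n+d+1} \cong [nd]$ transfers to $\nu_B^{n+d+1} \cong [nd]$, proving that $B$ is $\frac{nd}{n+d+1}$-Calabi-Yau.
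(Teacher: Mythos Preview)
Your high-level strategy is exactly the paper's: verify that $T$ is tilting and then invoke Theorem \ref{thm_construction_minimal_dRF_alg} for (i) and (ii), with (iii) following from derived invariance of the fractional Calabi--Yau property (Proposition \ref{prop_A_n_d}(iv)). Your reduction of the Ext-vanishing to $\Hom_{\mathcal{D}^b(A)}(P,\nu^t P[k])=0$ for $0\le t\le n+d$ and $k\neq 0$ is also the paper's Proposition \ref{prop_tilt_rigidity}.

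Where you are vague is precisely where the paper's real work lies, and your description of that work is not quite on target. The paper does not compute $\Hom(P,\nu^t P[k])$ by tracking $\tau_d$-translates and invoking the Ext-formula of Proposition \ref{prop_A_n_d}(iii) directly; instead it builds a lattice-path model for $\mathcal{M}_{n+1}^d$ in which $\nu$ becomes a cyclic rotation $r$ on paths (Proposition \ref{prop_nu_orbit_M}). Coprimality is used not to control the period of $\nu$ (that comes from the Calabi--Yau dimension), but to guarantee that the rotation action on lattice paths is free, so that rational $(d,n)$-Dyck paths give a fundamental domain --- this is what singles out $P$. Rigidity for $t\ge 2$ then reduces to a purely geometric statement (Proposition \ref{prop_existence_2*1_square}): a $(2\times 1)$-rectangle always fits between a Dyck path and any $r^t$-rotate of another, which forces $\Hom$ to vanish.

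For generation the paper does not recover the indecomposable projectives directly. It introduces, for each lattice path $\ell$, an \emph{anchor} point and two invariants $(h_\ell,\mu_\ell)$, and then runs a double induction (Lemma \ref{lem_the_resolving_exact_seq}, Proposition \ref{prop_tilt_generating}): a carefully chosen minimal $(d+2)$-exact sequence expresses $M_\ell$ in terms of modules with either larger $h$ or the same $h$ and strictly smaller $\mu$. The base case and the conclusion use that every indecomposable \emph{injective} has $h\ge 1$, giving $DA\in\thick\langle T\rangle$. Your expectation that iterated cones along the $\nu$-orbit of $P$ suffice is morally right, but finding the correct induction scheme is the substance of the argument.
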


    \begin{Eg}\label{eg_A_5_3_B}
        Let $A = A_5^3$. The tilting complex in Theorem \ref{thm_main_construction_higher_auslander_alg_typeA} is given by $T = \bigoplus_{i=1}^{7}\nu^iP$ where $P = \bigoplus_{j\in J}P_j$. Here $P_j$ is the indecomposable projective module at vertex $j$ and $J=\{(1,2,3), (1,2,4), (1,2,5), (1,3,4), (1,3,5)\}$. 
        The Gabriel quiver of $B = \End_{\mathcal{D}^b(A)}(T)$ is given below. The relations are given by commutativity relations of squares (blue dashed line) and zero relations (red dashed line). This is a special case of the general description of the algebra $B$ given in Proposition \ref{prop_B_B0}.     
    \begin{equation*}
        \begin{xy}
            0;<18pt,0cm>:<8pt,24pt>::
            (0,0) *+{1} ="1",
            (0,1) *+{2} ="2",
            (0,2) *+{3} ="3",
            (0,3) *+{6} ="6",
            (1.5,0) *+{4} ="4",
            (1.5,1) *+{5} ="5",
            (1.5,2) *+{7} ="7",
            (1.5,3) *+{9} ="9",
            (3,1) *+{8} ="8",
            (3,2) *+{10} ="10",
            (4.5,0) *+{11} ="11",
            (4.5,1) *+{12} ="12",
            (4.5,2) *+{13} ="13",
            (4.5,3) *+{16} ="16",
            (6,0) *+{14} ="14",
            (6,1) *+{15} ="15",
            (6,2) *+{17} ="17",
            (6,3) *+{19} ="19",
            (7.5,1) *+{18} ="18",
            (7.5,2) *+{20} ="20",
            (9,0) *+{21} ="21",
            (9,1) *+{22} ="22",
            (9,2) *+{23} ="23",
            (9,3) *+{26} ="26",
            (10.5,0) *+{24} ="24",
            (10.5,1) *+{25} ="25",
            (10.5,2) *+{27} ="27",
            (10.5,3) *+{29} ="29",
            (12,1) *+{28} ="28",
            (12,2) *+{30} ="30",
            (13.5,0) *+{31} ="31",
            (13.5,1) *+{32} ="32",
            (13.5,2) *+{33} ="33",
            (15,0) *+{34} ="34",
            (15,1) *+{35} ="35",
            "1", {\ar "2"},
            "2", {\ar "3"},
            "3", {\ar "6"},
            "2", {\ar "4"},
            "3", {\ar "5"},
            "6", {\ar "7"},
            "4", {\ar "5"},
            "5", {\ar "7"},
            "7", {\ar "9"},
            "7", {\ar "8"},
            "9", {\ar "10"},
            "8", {\ar "10"},
            "8", {\ar "11"},
            "10", {\ar "12"},
            "11", {\ar "12"},
            "12", {\ar "13"},
            "13", {\ar "16"},
            "12", {\ar "14"},
            "13", {\ar "15"},
            "16", {\ar "17"},
            "14", {\ar "15"},
            "15", {\ar "17"},
            "17", {\ar "19"},
            "17", {\ar "18"},
            "18", {\ar "20"},
            "19", {\ar "20"},
            "18", {\ar "21"},
            "20", {\ar "22"},
            "21", {\ar "22"},
            "22", {\ar "23"},
            "23", {\ar "26"},
            "22", {\ar "24"},
            "23", {\ar "25"},
            "26", {\ar "27"},
            "24", {\ar "25"},
            "25", {\ar "27"},
            "27", {\ar "29"},
            "27", {\ar "28"},
            "29", {\ar "30"},
            "28", {\ar "30"},
            "28", {\ar "31"},
            "30", {\ar "32"},
            "31", {\ar "32"},
            "32", {\ar "33"},
            "32", {\ar "34"},
            "33", {\ar "35"},
            "34", {\ar "35"},
            "1", {\ar@[red]@{--} "4"},
            "6", {\ar@[red]@{--} "9"},
            "11", {\ar@[red]@{--} "14"},
            "16", {\ar@[red]@{--} "19"},
            "21", {\ar@[red]@{--} "24"},
            "26", {\ar@[red]@{--} "29"},
            "31", {\ar@[red]@{--} "34"},
            "4", {\ar@[red]@{--} "8"},
            "5", {\ar@[red]@{--} "11"},
            "9", {\ar@[red]@{--} "13"},
            "10", {\ar@[red]@{--} "16"},
            "14", {\ar@[red]@{--} "18"},
            "15", {\ar@[red]@{--} "21"},
            "19", {\ar@[red]@{--} "23"},
            "20", {\ar@[red]@{--} "26"},
            "24", {\ar@[red]@{--} "28"},
            "25", {\ar@[red]@{--} "31"},
            "29", {\ar@[red]@{--} "33"},
            "2", {\ar@[blue]@{--} "5"},
            "3", {\ar@[blue]@{--} "7"},
            "7", {\ar@[blue]@{--} "10"},
            "8", {\ar@[blue]@{--} "12"},
            "12", {\ar@[blue]@{--} "15"},
            "13", {\ar@[blue]@{--} "17"},
            "17", {\ar@[blue]@{--} "20"},
            "18", {\ar@[blue]@{--} "22"},
            "22", {\ar@[blue]@{--} "25"},
            "23", {\ar@[blue]@{--} "27"},
            "27", {\ar@[blue]@{--} "30"},
            "28", {\ar@[blue]@{--} "32"},
            "32", {\ar@[blue]@{--} "35"},
        \end{xy}
    \end{equation*}
    \end{Eg}

\subsection{Lattice paths}

To prove Theorem \ref{thm_main_construction_higher_auslander_alg_typeA}, we introduce a combinatorial model for $A = A_{n+1}^d$ and $\mathcal{M} = \mathcal{M}_{n+1}^d$ by lattice paths.

For a $(d\times n)$-rectangle, we fix the coordinate as in Figure \ref{fig:fix_coord_of_rectangle}. By a lattice path, we mean a sequence of steps of length $1$ in the direction either $(1,0)$ or $(0,1)$ that 
 connects $(0,0)$ and $(d,n)$.
 
\begin{figure}[h]
    \begin{tikzpicture}[scale=0.6]
        \draw[] (-7,0) rectangle +(5,7);
        \node[] at (-7,-0.4) {$(0,0)$}; 
        \node[] at (-7,7.4) {$(0,n)$}; 
        \node[] at (-2,-0.4) {$(d,0)$};
        \node[] at (-2,7.4) {$(d,n)$};
    \end{tikzpicture}
    \caption{$(d\times n)$-rectangle}
    \label{fig:fix_coord_of_rectangle}
\end{figure}
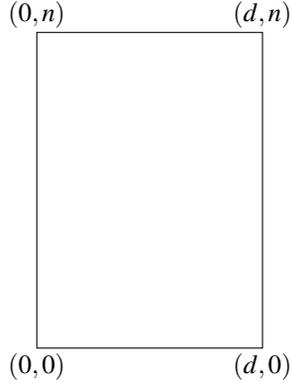

We denote by $L_{d,n}$ the set of all lattice paths in the $(d\times n)$-rectangle. 
This is a set with $\binom{d+n}{d}$ elements.
We introduce coordinates for each element in $L_{d,n}$. Starting from $(0,0)$, we label each step by $1,2,3\ldots, d+n$ and the coordinates of the path are given by the ordered sequence of the labels of horizontal steps. 

\begin{Eg}
    Let $d=3$ and $n=4$. Then the coordinates of the following path is $(1,4,7)$.
    \begin{center}
    \begin{tikzpicture}[scale=0.6]
        (0,0) rectangle +(3,4);
        \draw[help lines] (0,0) grid +(3,4);
        \coordinate (prev) at (0,0);
        \draw [color=blue, line width=1] (0,0)--(1,0)--(1,2)--(2,2)--(2,4)--(3,4);
    \end{tikzpicture}
    \end{center}
\end{Eg}

Note that the coordinates of a lattice path form a tuple of $d$ positive integers which are strictly increasing.
Indeed there is a bijection from $L_{d,n}$ to $\os_{n+1}^d$ by sending each path to its coordinates. We denote this map by 
\begin{align*}
    c : L_{d,n} & \rightarrow \os_{n+1}^d\\
    \ell & \mapsto c(\ell),
\end{align*}
with the inverse map by
\begin{align*}
    l: \os_{n+1}^d & \rightarrow L_{d,n}\\
    x & \mapsto l(x).
\end{align*}

The set $L_{d,n}$ can be naturally viewed as a poset. A lattice path $\ell_1$ is smaller than another $\ell_2$ if $\ell_1$ lies below $\ell_2$, denoted by $\ell_1 < \ell_2$.

The area between a lattice path $\ell$ and the right and bottom boundaries gives a Young diagram $Y_{\ell}$. We denote the set by $Y_{d,n}=\{Y_{\ell}\mid \ell\in L_{d,n}\}$. 
We define a relation $R$ on $L_{d,n}$ as follows: 
$\ell_1 R \ell_2$ if and only if $\ell_1 \leq \ell_2$ and no $(2\times 1)$-rectangles fit in $Y_{\ell_2}-Y_{\ell_1}$. 

We illustrate the relation $R$ with Figure \ref{fig:relation_R}. Here we have $\ell_1<\ell_2<\ell_3$ where $\ell_1R\ell_2$ and $\ell_2R\ell_3$ but $\ell_1R\ell_3$ doesn't hold.

\begin{figure}[h]
    \centering
    \begin{tikzpicture}[scale=0.6]
        (0,0) rectangle +(3,4);
        \draw[help lines] (0,0) grid +(3,4);
        \coordinate (prev) at (0,0);
        \draw [color=blue, line width=1] (0,0)--(0,2)--(1,2)--(1,3)--(2,3)--(2,4)--(3,4);
        \draw [color=green, line width=1] (0,0.05)--(1,0.05)--(1,2)--(2,2)--(2,3)--(2.95,3)--(2.95,4);
        \draw[color=red, line width=1] (0,0)--(2,0)--(2,1)--(2,2)--(3,2)--(3,4);
        \node[] at (1.4, 1.5) {$\ell_2$};
        \node[] at (2.4, 0.5) {$\ell_1$};
        \node[] at (0.6, 2.5) {$\ell_3$};
    \end{tikzpicture}
    \caption{$\ell_1<\ell_2<\ell_3$}
    \label{fig:relation_R}
\end{figure}
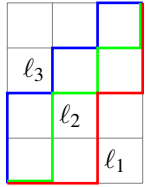

\begin{Prop}
    Let $\ell_1, \ell_2\in L_{d,n}$. We have that 
    $\ell_1 R \ell_2$ if and only if $c(\ell_1)\preccurlyeq c(\ell_2)$.
\end{Prop}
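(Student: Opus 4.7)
The plan is to translate both conditions into explicit inequalities on the coordinate tuples $c(\ell_j)=(x_1^{(j)},\ldots,x_d^{(j)})$ and verify that they coincide with the defining inequalities of $\preccurlyeq$.

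First I would give a column-wise description of $Y_\ell$. From the definition of $c$, the $i$-th horizontal step of a path with coordinates $(x_1,\ldots,x_d)$ runs from $(i-1,x_i-i)$ to $(i,x_i-i)$, so the cells $(i-1,b)$ in the $i$-th column of $Y_\ell$ are exactly those with $0\leq b < x_i-i$. Consequently $\ell_1\leq\ell_2$, i.e.\ $Y_{\ell_1}\subseteq Y_{\ell_2}$, is equivalent to $x_i^{(1)}\leq x_i^{(2)}$ for every $i$, matching the weak inequalities $x_i\leq y_i$ in the definition of $\preccurlyeq$.

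Next I would analyze the skew $Y_{\ell_2}\setminus Y_{\ell_1}$ column by column: its intersection with column $i-1$ is the set of rows $b$ with $x_i^{(1)}-i \leq b < x_i^{(2)}-i$. Reading a ``$(2\times 1)$-rectangle'' as two horizontally adjacent cells (the example in Figure \ref{fig:relation_R} shows $\ell_1 R \ell_2$ holding despite vertically stacked skew cells), the condition becomes: no horizontally adjacent skew cells $(i-1,b)$ and $(i,b)$ exist. The relevant row-intervals in columns $i-1$ and $i$ are $[x_i^{(1)}-i,\,x_i^{(2)}-i)$ and $[x_{i+1}^{(1)}-(i+1),\,x_{i+1}^{(2)}-(i+1))$; using $x_i^{(j)} < x_{i+1}^{(j)}$ one checks that both endpoints of the second interval are at least the corresponding endpoints of the first, so the two intervals are disjoint iff $x_i^{(2)}-i \leq x_{i+1}^{(1)}-(i+1)$, i.e.\ $x_i^{(2)} < x_{i+1}^{(1)}$. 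Combining with the previous step yields the chain $x_1^{(1)}\leq x_1^{(2)} < x_2^{(1)}\leq x_2^{(2)} < \cdots < x_d^{(1)}\leq x_d^{(2)}$, which is precisely $c(\ell_1)\preccurlyeq c(\ell_2)$.

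The proof is essentially a direct translation, so the only subtle point is fixing the convention for ``$(2\times 1)$-rectangle'' and checking that forbidding just \emph{adjacent} horizontal skew pairs is equivalent to forbidding \emph{any} two skew cells in the same row. The latter follows because each row of the skew is a contiguous interval of columns: writing $h_i^{(j)} = x_i^{(j)}-i$, both $\{i : h_i^{(1)}\leq b\}$ and $\{i : h_i^{(2)} > b\}$ are intervals of $i$ (the former downward closed, the latter upward closed) since $h_i^{(j)}$ is non-decreasing in $i$, hence so is their intersection.
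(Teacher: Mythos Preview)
Your proof is correct and follows essentially the same approach as the paper: both translate the geometric condition on the skew shape $Y_{\ell_2}\setminus Y_{\ell_1}$ directly into the coordinate inequalities defining $\preccurlyeq$. Your column-wise bookkeeping is more explicit and careful than the paper's terse argument (and your reading of the $(2\times 1)$-rectangle as a horizontal domino is the intended one), but the underlying idea is identical.
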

\begin{proof}
    Assume $\ell_1 < \ell_2$.
    Let $c(\ell_1) = (x_1, x_2, \ldots, x_d)$ and $c(\ell_2) = (y_1, y_2, \ldots, y_d)$. We have $x_j\leq y_j$ for all $1\leq j\leq d$.
    Suppose the $(2\times 1)$-rectangle with bottom-left corner $(i,j)$ fits  in $Y_{\ell_2}-Y_{\ell_1}$. 
    Then the following condition holds.
    \begin{equation*}
        x_{i+2}\leq i+j, y_{i+1}\geq i+j+1.
    \end{equation*}
    This implies that $c(\ell_1)\npreceq c(\ell_2)$.

    On the other hand, $c(\ell_1)\npreceq c(\ell_2)$ implies that there exists $1\leq i\leq d-1$ such that $x_{i+1} > y_i$. Then a $(2\times 1)$-rectangle with bottom-left corner $(i-1, x_{i+1})$ would fit into $Y_{\ell_2}-Y_{\ell_1}$.
\end{proof}

Thus there are inverse bijections 
\begin{equation*}
    \xymatrix{
    (L_{d,n}, R)\ar@/^/[r]^{c} & (\os_{n+1}^d, \preccurlyeq)\ar@/^/[l]^{l}
    }
\end{equation*}
Since the vertices in the quiver $Q^{n+1,d}$ of $A$ are labelled by $\os_{n+1}^d$, we may instead label them by $L_{d,n}$. 
The corresponding indecomposable projective and injective $A$-modules will be labelled as $P_{\ell}$ and $I_{\ell}$ respectively for $\ell\in L_{d,n}$.
Similarly the indecomposables in $\mathcal{M}$ are $M(x)$ where $x\in \os_{n+1}^{d+1}$, and so we shall label $M(x) = M_{\ell (x)}$ instead.

Define 
\begin{align*}
    \bar{\cdot}:  L_{d,n} & \rightarrow  L_{d+1,n} & \ell & \mapsto \bar{\ell} \\
    \tilde{\cdot}:  L_{d,n} & \rightarrow L_{d+1,n} & \ell & \mapsto \tilde{\ell},
\end{align*}
where $\bar{\ell}$ (respectively $\tilde{\ell}$) is obtained from $\ell$ by adding a horizontal step at the beginning (respectively the end). 
We reinterpret parts of Proposition \ref{prop_A_n_d} in terms of lattice paths. See Example \ref{eg_lattice_path_proj_inj} and \ref{eg_(p,q)_strip} for specific cases.

\begin{Prop}\label{prop_M_path}
    The following statements hold.
    \begin{itemize}
        \item [(i)] Let $p\in L_{d+1, n}$. Then $M_p$ is projective (respectively injective) if and only if the first (respectively last) step of $p$ is horizontal.
        \item [(ii)] We have $P_{\ell} = M_{\bar{\ell}}$ and $I_{\ell} = M_{\tilde{\ell}}$ for all $\ell\in L_{d,n}$. 
        \item [(iii)] Let $M_p, M_q\in \mathcal{M}$ such that there is a minimal $(d+2)$-exact sequence 
        \begin{equation*}
            \xymatrix@C=1em@R=1em{
        0\ar[r] & M_p\ar[r] & M_{\ell_d}\ar[r] & \cdots\ar[r] & M_{\ell_i}\ar[r] & \cdots\ar[r] & M_{\ell_1}\ar[r] & M_q\ar[r] & 0.}
        \end{equation*}
        Then $q$ can be obtained from $p$ by shifting $1$ step along the vector $(-1, 1)$ such that the strip bounded by $p$ and $q$ has width $1$. We refer to it as $(p,q)$-strip. The middle term $\ell_i$ is a lattice path in which the first $i$ horizontal steps coincide with that of $p$ and the last $d+1-i$ horizontal steps coincide with that of $q$.
    \end{itemize}
\end{Prop}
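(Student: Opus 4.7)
The plan is to deduce each part directly from Proposition \ref{prop_A_n_d} (applied to $A_{n+1}^d$, i.e.\ with $n$ replaced by $n+1$) by translating coordinates under the bijection $c: L_{d+1,n}\xrightarrow{\sim}\os_{n+1}^{d+1}$. The only substantive work is bookkeeping of how horizontal-step labels behave under the operations $\bar{\cdot}$, $\tilde{\cdot}$ and under removing a single entry.

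For (i), write $c(p)=(x_1,\ldots,x_{d+1})$. Since step labels increase from $1$ at $(0,0)$ to $n+d+1$ at $(d+1,n)$, the equality $x_1=1$ (the label of the first horizontal step of $p$) is equivalent to the first step of $p$ being horizontal. By Proposition \ref{prop_A_n_d}(i) this is exactly the criterion for $M_p$ to be projective. Dually, $x_{d+1}=n+d+1$ iff the last step is horizontal, characterizing injective modules.

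For (ii), let $\ell\in L_{d,n}$ with $c(\ell)=(y_1,\ldots,y_d)$. Prepending a horizontal step shifts every original label up by one and inserts a new horizontal step at label $1$, so $c(\bar\ell)=(1,y_1+1,\ldots,y_d+1)$; part (i) gives that $M_{\bar\ell}$ is projective, and the formula $\top M(x_1,\ldots,x_{d+1})=S_{(x_2-1,\ldots,x_{d+1}-1)}$ from Proposition \ref{prop_A_n_d} yields $\top M_{\bar\ell}=S_{(y_1,\ldots,y_d)}=S_{c(\ell)}$, forcing $M_{\bar\ell}=P_\ell$. Appending a horizontal step leaves original labels unchanged and introduces the new label $n+d+1$, so $c(\tilde\ell)=(y_1,\ldots,y_d,n+d+1)$; by part (i) $M_{\tilde\ell}$ is injective with socle $S_{(y_1,\ldots,y_d)}=S_{c(\ell)}$, giving $M_{\tilde\ell}=I_\ell$.

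For (iii), apply Proposition \ref{prop_A_n_d}(iv) to a strict chain $1\leq x_1<\cdots<x_{d+2}\leq n+d+1$ and put $p=l(x_1,\ldots,x_{d+1})$, $q=l(x_2,\ldots,x_{d+2})$ and $\ell_i=l(x_1,\ldots,x_i,x_{i+2},\ldots,x_{d+2})$ for $1\leq i\leq d$. Then by construction the first $i$ horizontal labels of $\ell_i$ are $(x_1,\ldots,x_i)$, matching those of $p$, and its last $d+1-i$ horizontal labels are $(x_{i+2},\ldots,x_{d+2})$, matching those of $q$, which is the middle-term claim. To verify the $(p,q)$-strip description, track positions step by step: $p$ and $q$ agree (and are both vertical) on labels $1,\ldots,x_1-1$ and on labels $x_{d+2}+1,\ldots,n+d+1$; at label $x_1$ the path $p$ moves right while $q$ moves up, placing $q$ at $p+(-1,1)$; on labels $x_1+1,\ldots,x_{d+2}-1$ both paths perform the identical sequence of moves (horizontal exactly at $x_2,\ldots,x_{d+1}$, vertical otherwise), preserving the offset; finally at label $x_{d+2}$ the moves swap ($p$ up, $q$ right) and the offset cancels. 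Hence the active portion of $q$ is the $(-1,1)$-translate of the active portion of $p$, and the region between them forms a width-$1$ diagonal strip.

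The main obstacle, such as it is, lies in the position-tracking of (iii). The key observation that makes the bookkeeping clean is that the symmetric difference of the horizontal-label sets of $p$ and $q$ consists of only $\{x_1,x_{d+2}\}$, which forces the difference of positions to remain constant and equal to $(-1,1)$ on the entire intermediate range.
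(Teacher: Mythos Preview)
Your proof is correct and follows essentially the same approach as the paper: translate lattice paths to ordered sequences via $c$ and invoke Proposition~\ref{prop_A_n_d}. In fact you supply more detail than the paper does---in particular, your step-by-step position tracking for the $(p,q)$-strip in part~(iii) is spelled out explicitly, whereas the paper simply asserts that the coordinate formula $f^i_j=c_j$ for $j\le i$ and $f^i_j=c'_j$ for $j>i$ is ``equivalent to statement~(iii)''.
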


\begin{proof}
    Write $M_p = M(c(p))$ where $c(p) = (c_1, \ldots, c_{d+1})$. By Proposition \ref{prop_A_n_d} $(i)$, $M(c(p))$ is projective (respectively injective) if and only if $c_1 = 1$ (respectively $c_{d+1} = n+d+1$) which is equivalent to statement $(i)$.

    Let $x=(x_1, \ldots, x_{d+1})$. We have $\top M(x) = S_{(x_2-1, \ldots, x_{d+1}-1)}$ and $\soc M(x) = S_{(x_1, \ldots, x_d)}$ where $S_y$ denotes the simple module at vertex $y\in \os_{n+1}^d$. 
    Thus $\top M_{\bar{\ell}} = S_{\ell}$.
    Moreover, the first step of $\bar{\ell}$ being horizontal implies that $M_{\bar{\ell}}$ is projective. So  $M_{\bar{\ell}} = P_{\ell}$. The statement for $I_{\ell}$ can be shown similarly. So statement $(ii)$ follows.

    Write $M_q = M(c(q))$ and $M_{\ell_i} = M(c(\ell_i))$ where $c(q) = (c'_1, \ldots, c'_{d+1})$ and $c(\ell_i) = (f_1^i, \ldots, f_{d+1}^i)$ for $1\leq i\leq d$. 
    By Proposition \ref{prop_A_n_d} $(iv)$, we have 
    \begin{equation*}
        f_j^i = \left\{ \begin{array}{ll}
                c_j &  j\leq i\\
                c'_j & j > i
                \end{array} \right.
    \end{equation*}
    which is equivalent to statement $(iii)$.
\end{proof}

We illustrate the statements $(ii)$ and $(iii)$ in Proposition \ref{prop_M_path} with the following examples.

\begin{Eg}\label{eg_lattice_path_proj_inj}
Let $\ell=l(1,3,5)\in L_{3,4}$. We have $P_{\ell} = M_{\bar{\ell}}$ where $c(\bar{\ell}) = (1,2,4,6)$ and $I_{\ell} = M_{\tilde{\ell}}$ where $c(\tilde{\ell}) = (1,3,5,8)$.

\begin{center}
\begin{tikzpicture}[scale=0.6]
(-5,0) rectangle +(3,4);
\draw[help lines] (-5,0) grid +(3,4);
\draw [color=blue, line width=1] (-5,0)--(-4,0)--(-4,1)--(-3,1)--(-3,2)--(-2,2)--(-2,4);
\node [] at (-3.5, -0.8) {$\ell$};
(1,0) rectangle +(4,4);
\draw[help lines] (1,0) grid +(4,4);
\coordinate (prev) at (1,0);
\draw [color=blue, line width=1] (1,0)--(2,0)--(3,0)--(3,1)--(4,1)--(4,2)--(5,2)--(5,4);
\node [] at (3, -0.8) {$\bar{\ell}$};
(8,0) rectangle +(4,4);
\draw[help lines] (8,0) grid +(4,4);
\draw [color=blue, line width=1] (8,0)--(9,0)--(9,1)--(10,1)--(10,2)--(11,2)--(11,4)--(12,4);
\node [] at (10, -0.8) {$\tilde{\ell}$};
\end{tikzpicture}
\end{center}
\end{Eg}

\begin{Eg}\label{eg_(p,q)_strip} 
Let $p$ be the blue path and $q$ be the green one. The $(p,q)$-strip shown in the picture gives the following  minimal $5$-exact sequence.
\begin{equation*}
    \xymatrix@C=1em@R=1em{0\ar[r] & M_p\ar[r] & M_{\ell_3}\ar[r] & M_{\ell_2}\ar[r] & M_{\ell_1}\ar[r] & M_q\ar[r] & 0.}
\end{equation*}
   
    \begin{tikzpicture}[scale=0.6]
        (-5,0) rectangle +(4,4);
        \draw[help lines] (-5,0) grid +(4,4);
        \coordinate (prev) at (-5,0);
        \draw [color=green, line width=1] (-5,0)--(-5,1)--(-4,1)--(-4,2)--(-3,2)--(-3,3)--(-2,3)--(-2,4)--(-1,4);
        \draw [color=blue, line width=1] (-5,0)--(-4,0)--(-3,0)--(-3,1)--(-2,1)--(-2,2)--(-1,2)--(-1,4);
        \node [] at (-3, -0.8) {$(p,q)$-strip};
        (0,0) rectangle +(4,4);
        \draw[help lines] (0,0) grid +(4,4);
        \coordinate (prev) at (0,0);
        \draw [color=blue, line width=1] (0,0)--(1,0)--(2,0)--(2,1)--(3,1)--(3,4)--(4,4);
        \node[] at (2, -0.8) {$\ell_3$};
        (5,0) rectangle +(4,4);
        \draw[help lines] (5,0) grid +(4,4);
        \draw [color=blue, line width=1] (5,0)--(6,0)--(7,0)--(7,3)--(8,3)--(8,4)--(9,4);
        \node[] at (7, -0.8) {$\ell_2$};
        (10,0) rectangle +(4,4);
        \draw[help lines] (10,0) grid +(4,4);
        \draw [color=blue, line width=1] (10,0)--(11,0)--(11,2)--(12,2)--(12,3)--(13,3)--(13,4)--(14,4);
        \node[] at (12, -0.8) {$\ell_1$};
    \end{tikzpicture}
\end{Eg}

Recall the $d\mathbb{Z}$-cluster tilting subcategory in $\mathcal{D}^b(A)$ induced by $\mathcal{M}$.
\begin{equation*}
    \mathcal{U} = \{\nu_d^i(A)\mid i\in \mathbb{Z}\} = \add \{M_{\ell}[di]\mid \ell\in L_{n+1,d}, i\in \mathbb{Z}\}.
\end{equation*}
Thus an indecomposable object in $\mathcal{U}$ can be represented by $\ell\in L_{d+1,n}$ decorated by a shift $[di]$ for $i\in \mathbb{Z}$. 
Since $\mathcal{U}[d] = \mathcal{U}$, $\mathcal{U}$ is closed under the Nakayama functor $\nu$. 
We now describe the $\nu$-orbit of an arbitrary  object in $\mathcal{U}$. 
To do this, we define the following rotation map of a lattice path.
\begin{align*}
    r :  L_{d, n} & \rightarrow  L_{d, n} \\
    \ell & \mapsto r(\ell)
\end{align*}
where $r(\ell)$ is obtained from $\ell$ by moving the first step to the last. Note that this induces a $\mathbb{Z}/(n+d)\mathbb{Z}$-action on $L_{d,n}$. 

\begin{Eg} Let $\ell=l(1,3,5)\in L_{3,4}$. We have $r(\ell) = l(2,4,7)\in L_{3,4}$.
    \begin{center}
\begin{tikzpicture}[scale=0.6]
(-5,0) rectangle +(3,4);
\draw[help lines] (-5,0) grid +(3,4);
\draw [color=blue, line width=1] (-5,0)--(-4,0)--(-4,1)--(-3,1)--(-3,2)--(-2,2)--(-2,4);
\node [] at (-3.5, -0.8) {$\ell$};
(1,0) rectangle +(3,4);
\draw[help lines] (1,0) grid +(3,4);
\coordinate (prev) at (1,0);
\draw [color=blue, line width=1] (1,0)--(1,1)--(2,1)--(2,2)--(3,2)--(3,4)--(4,4);
\node [] at (2.5, -0.8) {$r({\ell})$};
\end{tikzpicture}
\end{center}
\end{Eg}

\begin{Prop}\label{prop_nu_orbit_M}
Let $M_{\ell}[di]\in \mathcal{U}$. We have
    \begin{equation*}
    \nu (M_{\ell}[di]) = 
    \left\{ \begin{array}{ll}
                M_{r(\ell)}[di] &  \text{ if the first step of $\ell$ is horizontal }\\
                M_{r(\ell)}[d(i+1)] & \text{ otherwise.}
                \end{array} \right.
\end{equation*}
\end{Prop}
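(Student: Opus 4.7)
The plan is to split into two cases based on whether the first step of $\ell \in L_{d+1,n}$ is horizontal or vertical, which by Proposition \ref{prop_M_path}(i) is exactly the distinction between $M_\ell$ being projective or not. Since the Nakayama functor commutes with shifts, it suffices to treat $i=0$.

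In the horizontal case, write $\ell = \bar{m}$ for some $m \in L_{d,n}$, so that $M_\ell = P_m$ by Proposition \ref{prop_M_path}(ii). The standard fact $\nu(P_m) = I_m$ together with $I_m = M_{\tilde{m}}$ (also Proposition \ref{prop_M_path}(ii)) reduces everything to the combinatorial identity $r(\bar{m}) = \tilde{m}$, which is immediate from the definitions: both paths are obtained from $m$ by attaching a horizontal step at the end (in $r(\bar{m})$ the first horizontal step of $\bar{m}$ is simply transported to the last position, and since every step after that first horizontal step of $\bar{m}$ already agrees with $m$, the result coincides with $\tilde{m}$).

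In the vertical case, $M_\ell$ is non-projective, so by Proposition \ref{prop_A_n_d} and the formula $\tau_d^\ell M \cong H^0(\nu_d^\ell M)$ recalled in the preliminaries one has $\nu_d M_\ell = \tau_d M_\ell$, concentrated in degree $0$; and by Proposition \ref{prop_A_n_d} (the part on $\tau_d$), $\tau_d M(c(\ell)) = M(c(\ell) - (1,\dots,1))$. The task then reduces to identifying $c(r(\ell))$ with $c(\ell) - (1,\dots,1)$. But if the first step of $\ell$ is vertical, then $1 \notin \{x_1,\dots,x_{d+1}\} = c(\ell)$, so moving that first (vertical) step to the end relabels each subsequent step by subtracting one and appends a vertical step with label $d+1+n$; the horizontal-step labels therefore become $(x_1-1,\dots,x_{d+1}-1)$. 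This gives $\nu_d M_\ell = M_{r(\ell)}$, equivalently $\nu M_\ell = M_{r(\ell)}[d]$.

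Combining both cases with the $[di]$-shift yields the stated formula. There is no substantive obstacle: the only thing to verify carefully is the compatibility of the rotation $r$ with the coordinate description, which amounts to tracking the effect of moving the first step on the integer labels $1,\dots,n+d+1$ of the steps of $\ell$, and this is essentially bookkeeping in the two mutually exclusive cases.
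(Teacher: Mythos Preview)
Your two-case split and the combinatorial identifications are exactly what the paper does, and Case~1 is fine: the identity $r(\bar m)=\tilde m$ is the coordinate-free version of the paper's computation $\nu M(1,c_2,\ldots,c_{d+1})=M(c_2-1,\ldots,c_{d+1}-1,n+d+1)$.

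In Case~2 there is a small gap. From the formula $\tau_d M\cong H^0(\nu_d M)$ you only get $H^0(\nu_d M_\ell)=\tau_d M_\ell$; this does not by itself force $\nu_d M_\ell$ to be concentrated in degree $0$, and Proposition~\ref{prop_A_n_d} only tells you what $\tau_d M_\ell$ is, not that $\nu_d M_\ell$ is a stalk complex. The paper closes this by invoking Theorem~\ref{thm_d_rf_alg}(a): since $M_\ell\in\mathcal{M}$ is non-projective it lies in the $\nu_d$-orbit of some indecomposable projective $P$, say $M_\ell=\nu_d^{j}P$ with $j<0$, and then $\nu_d M_\ell=\nu_d^{j+1}P=\tau_d^{j+1}P=\tau_d M_\ell$ as modules. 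You should add this (or, equivalently, argue that $\nu_d M_\ell\in\mathcal{U}$ is an indecomposable with nonzero $H^0$, hence a module by the description $\mathcal{U}=\add\{M[di]\}$).
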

\begin{proof}
    Write $c({\ell}) = (c_1, \ldots, c_{d+1})$.
    If the first step of $\ell$ is horizontal, then $c_1 = 1$ which implies that $M_{\ell}$ is projective. 
    Combining Proposition \ref{prop_A_n_d} $(i)$ and $(ii)$, we have
    \begin{equation*}
        \nu M_{\ell} = \nu M(1, c_2, \ldots, c_{d+1}) = M(c_2-1, \ldots, c_{d+1}-1, n+d+1) = M_{r(\ell)}.
    \end{equation*}
    Otherwise, by definition, 
    \begin{equation*}
        M_{\ell} = \nu_d^j(P), P\in \add A, \text{ for some } j<0.
    \end{equation*}
    So $\nu_d(M_{\ell}) = \tau_d(M_{\ell})$ by Theorem \ref{thm_d_rf_alg} (a). 
    Moreover $\tau_d M_{\ell} = M(c_1-1, \ldots, c_{d+1}-1)$ by Proposition \ref{prop_A_n_d} (ii). 
    We have 
    \begin{equation*}
        \nu M_{\ell}  = \nu_d M_{\ell}[d] = \tau_d M_{\ell}[d] = M_{r(\ell)}[d].
    \end{equation*}
\end{proof}

\subsection{Rational Dyck paths}
From this section on, we assume that $\gcd(n,d)=1$.
A rational $(d,n)$-Dyck path is a lattice path  in the $(d\times n)$-rectangle that stays below and never crosses the diagonal. We denote by $\Dyck_{d,n}$ the set of rational Dyck paths.

\begin{Eg}
    Let $d=3$ and $n=4$. $\Dyck_{3,4}$ consists of the following lattice paths.
    
\begin{tikzpicture}[scale=0.6]
(0,0) rectangle +(3,4);
\draw[help lines] (0,0) grid +(3,4);
\draw[dashed] (0,0) -- +(3,4);
\coordinate (prev) at (0,0);
\draw [color=blue, line width=1] (0,0)--(1,0)--(1,1)--(2,1)--(2,2)--(3,2)--(3,4);

(4,0) rectangle +(3,4);
\draw[help lines] (4,0) grid +(3,4);
\draw[dashed] (4,0) -- +(3,4);
\coordinate (prev) at (4,0);
\draw [color=blue, line width=1] (4,0)--(5,0)--(6,0)--(6,2)--(7,2)--(7,4);

(8,0) rectangle +(3,4);
\draw[help lines] (8,0) grid +(3,4);
\draw[dashed] (8,0) -- +(3,4);
\coordinate (prev) at (8,0);
\draw [color=blue, line width=1] (8,0)--(9,0)--(9,1)--(10,1)--(11,1)--(11,4);

(12,0) rectangle +(3,4);
\draw[help lines] (12,0) grid +(3,4);
\draw[dashed] (12,0) -- +(3,4);
\coordinate (prev) at (12,0);
\draw [color=blue, line width=1] (12,0)--(13,0)--(14,0)--(14,1)--(15,1)--(15,4);

(16,0) rectangle +(3,4);
\draw[help lines] (16,0) grid +(3,4);
\draw[dashed] (16,0) -- +(3,4);
\coordinate (prev) at (16,0);
\draw [color=blue, line width=1] (16,0)--(17,0)--(18,0)--(19,0)--(19,4);

\end{tikzpicture}
\end{Eg}

Recall the rotation map $\ell\mapsto r(\ell)$ gives a $\mathbb{Z}/(d+n)\mathbb{Z}$ action on $L_{d,n}$. It is a free action thus each orbit contains $n+d$ elements. Indeed, there is exactly one rational Dyck path in each orbit \cite[Page 57]{Biz54}. Thus there are $\frac{1}{d+n}\binom{d+n}{d}$ elements in $\Dyck_{d,n}$.

Let $\ell\in \Dyck_{d,n}$. Recall that $\bar{\ell}\in L_{d+1, n}$. Now we study the $r$-orbit of $\bar{\ell}$. 

For each lattice point $(x,y)$ with $x\leq d$ in the $((d+1)\times n)$-rectangle, we consider the curve $\delta_{(x,y)}$ which consists of a horizontal step from $(x,y)$ to $(x+1, y)$ together with the rays to $(x,y)$ and from $(x+1, y)$ of slope $\frac{n}{d}$. See Figure \ref{fig:delta_curve_region} (a) for  a generic example of such a curve $\delta_{(x,y)}$.

\begin{figure}[h]
\begin{center}
    \begin{tikzpicture}[scale=0.6]
    \draw[] (-7,0) rectangle +(5,7);
\node[] at (-7,-0.4) {$(0,0)$}; 
\node[] at (-7.8,7) {$(0,n)$}; 
\node[] at (-2,-0.4) {$(d+1,0)$};
\node[] at (-4.5, -0.8) {$(a)$};
\draw [color=red, line width=1] (-7,1.5)--(-5,5)--(-4,5)--(-3.14,7);
\node [] at (-5, 5.4) {$(x,y)$};
\node [] at (-2.7, 6) {$\delta_{(x,y)}$};
\draw[] (2,0) rectangle +(5,7);
\coordinate (prev) at (2,0);
\fill[blue!20] (2,1.5)--(2,0)--(4,0)--(4,5)--cycle;
\fill[blue!20] (5,5)--(6.14,7)--(7,7)--(7,5)--cycle;
\draw [color=red, line width=1] (2,1.5)--(4,5)--(5,5)--(6.14,7);
\node [] at (4, 5.4) {$(x,y)$};
\node [] at (3.3, 1.8) {$R_{(x,y)}$};
\draw [color=blue, line width=1] (2,0)--(4,0)--(4,5)--(7,5)--(7,7);
\node [] at (4.8, 2.5) {$\ell_{(x,y)}$};
\node [] at (4.5, -0.8) {$(b)$};
    \end{tikzpicture}
\end{center}
    \caption{$\delta_{(x,y)}$ and $R_{(x,y)}$}
    \label{fig:delta_curve_region}
\end{figure}
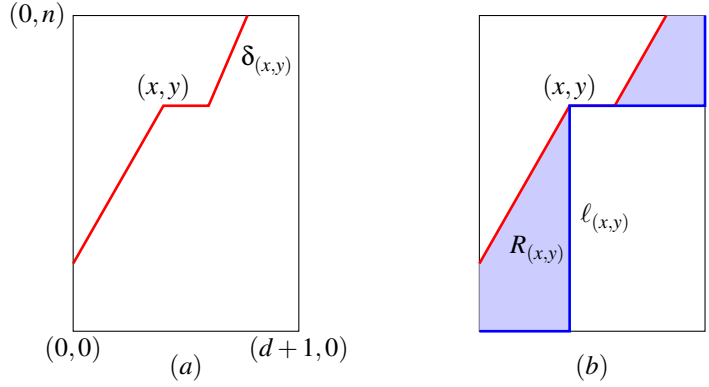
For such a curve $\delta_{(x,y)}$, we introduce the region $R_{(x,y)}\subseteq L_{d+1,n}$. We consider the lattice path $\ell_{(x,y)} = l(1,2,\ldots, x, x+y, x+y+1, \ldots, d+y)$ and define
\begin{equation*}
    R_{(x,y)} = \{\ell\in L_{d+1,n}\mid \ell\geq \ell_{(x,y)} \text{ and } \ell \text{ lies below } \delta_{(x,y)}\}.
\end{equation*}
See Figure \ref{fig:delta_curve_region} (b)  for an example of $\ell_{(x,y)}$ and $R_{(x,y)}$. 

Note that $\ell\in \Dyck_{d,n}$ precisely means that $\bar{\ell}\in R_{(0,0)}$. 
Suppose $\bar{\ell}$ passes through a lattice point $D = (x,y)$. The fact that $D$ lies below $\delta_{(0,0)}$ implies 
\begin{equation*}
    (x-1)\frac{n}{d} \leq y.
\end{equation*}
This is equivalent to 
\begin{equation*}
    (d+1-x)\frac{n}{d} \geq n-y,
\end{equation*}
which in turn implies that the lattice point  $D' = (d+1-x, n-y)$ lies above $\delta_{(d,n)}$.

Let 
\begin{equation*}
    \Delta = \{D\mid D \neq (d+1,n) \text{ is a lattice point that lies below } \delta_{(0,0)}\} = \bigcup_{i=0}^{n+d} \Delta_i
\end{equation*}
where $\Delta_i = \{(x,y)\in \Delta \mid x+y=i\}$ and 
\begin{equation*}
    \Delta' = \{D\mid D\neq (0,0) \text{ is a lattice point that lies above } \delta_{(d,n)}\} = \bigcup_{i=0}^{n+d} \Delta_i'
\end{equation*}
where $\Delta_i' = \{(x,y)\in \Delta'\mid d+1-x+n-y=i\}$.
By the discussion above, we have that there is a bijection from $\Delta$ to $\Delta'$ by sending $D= (x,y)$ to $D'=(d+1-x, n-y)$.

For each $D\in \Delta$, we introduce the following subset $S_D\subseteq R_{(0,0)}$. See Figure \ref{fig:region_S_D} for an example of such a regoin $S_D$.
\begin{equation*}
    S_D =\{\ell\in R_{(0,0)}\mid \ell \text{ passes through } D \}.
\end{equation*}
\begin{Rem}
    For any $0\leq i\leq n+d$, each $\ell\in R_{(0,0)}$ lies in $S_D$ for some unique $D\in \Delta_i$.
\end{Rem}

\begin{figure}[h]
    \centering
    \begin{tikzpicture}[scale=0.6]
        \draw[] (0,0) rectangle +(5,7);
        \draw[color=red, line width=1] (0,0)--(1,0)--(5,7);
        \draw[color=blue, line width=1] (0,2)--(5,2);
        \draw[color=blue, line width=1] (3,0)--(3,7);
        \fill[orange!30] (1,0)--(3,0)-- (3,2)--(2.14,2)--cycle;
        \fill[orange!30] (3,2)--(5,2)--(5,7)--(3,3.5)--cycle;
        \draw (3,2) node [scale=0.3, circle, draw,fill=red]{};
        \node[] at (3.4, 1.7) {$D$};
        \node[] at (4, 3) {$S_D$};
    \end{tikzpicture}
    \caption{the region $S_D$ for $D\in \Delta$}
    \label{fig:region_S_D}
\end{figure}
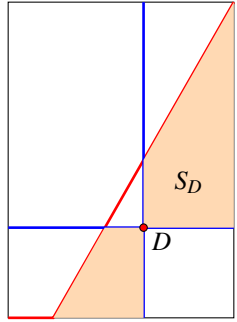

\begin{Lem}\label{lem_bijection_S_D_R_D'}
    For $1\leq i\leq n+d$ and $D=(x,y)\in \Delta_i$, we have $D'=(d+1-x, n-y)\in \Delta'_i$ and 
    \begin{align*}
        r^i: S_D\rightarrow R_{D'} 
    \end{align*}
    is bijective.
\end{Lem}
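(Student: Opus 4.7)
The plan is to realize $r^i$ as a cyclic rotation of the lattice-path word and check that the defining inequalities of $R_{D'}$ are exactly the rotated versions of those defining $S_D$.

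First I verify that $D' = (d+1-x, n-y) \in \Delta'_i$. The complementation $(x,y) \mapsto (d+1-x, n-y)$ preserves the anti-diagonal value $x+y$, so $D'$ lies in $\Delta'_i$ provided $D' \in \Delta'$; the inequality $yd \leq (x-1)n$ defining $\Delta$ rearranges to $(n-y)d \geq (d+1-x)n$ defining $\Delta'$, and $(x,y) \neq (d+1,n)$ is equivalent to $D' \neq (0,0)$.

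Next, for $\ell \in S_D$, split $\ell = \ell_{\mathrm{pre}} \cdot \ell_{\mathrm{suf}}$ at step $i = x+y$ where $\ell$ passes through $D$. Then $r^i(\ell) = \ell_{\mathrm{suf}} \cdot \ell_{\mathrm{pre}}$ is a path from $(0,0)$ that first reaches $D'$ at step $d+n+1-i$ (since $\ell_{\mathrm{suf}}$ has $d+1-x$ horizontal and $n-y$ vertical steps) and ends at $(d+1,n)$. Introduce the invariant $\phi(c,e) = nc - de$; the condition ``$\ell$ below $\delta_{(0,0)}$'' is $\phi \geq n$ at every lattice point of $\ell$ of positive index, and the identity $\phi(D') = n - \phi(D)$ translates this to ``$r^i(\ell)$ below $\delta_{D'}$'' after checking that lattice points of $r^i(\ell)$ land on the correct side of the horizontal jog of $\delta_{D'}$; this side analysis is forced by $\ell$ starting with a horizontal step. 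The condition $r^i(\ell) \geq \ell_{D'}$ is verified piecewise: before step $d+n+1-i$ the path has $x$-coordinate $\leq d+1-x$ (since it reaches $D'$ exactly at that step), and in the final stretch the $x$-coordinate is bounded by $x$ because the tail coincides with $\ell_{\mathrm{pre}}$, which ends at $D$.

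Bijectivity follows by the analogous argument for $r^{-i} = r^{d+n+1-i}$ applied to $\ell' \in R_{D'}$: a short calculation shows every $\ell' \in R_{D'}$ necessarily passes through $D'$ at step $d+n+1-i$, since the combined constraints of lying below $\delta_{D'}$ and above $\ell_{D'}$ pin down its $y$-coordinate there, and then the $\phi$-analysis reverses, placing $r^{-i}(\ell')$ into $R_{(0,0)}$ and passing through $D$. The main technical obstacle is the bookkeeping around the horizontal jog of each $\delta$-curve, since the $\phi$-threshold jumps by $n$ across it; the key input is that membership in $R_{(0,0)}$ forces the first step to be horizontal, which in turn guarantees that the step immediately after $D'$ in $r^i(\ell)$ is horizontal.
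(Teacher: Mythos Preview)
Your argument is correct and follows the same idea as the paper's proof: realize $r^i$ as cutting $\ell$ at $D$ and re-gluing (the paper phrases this as taking a length-$(n+d+1)$ window in the periodic extension $\ell^\ast$), then check that the resulting path lies in $R_{D'}$. Your verification via the linear form $\phi(c,e)=nc-de$ makes the ``below $\delta_{D'}$'' check and the forced horizontal step after $D'$ more explicit than the paper's pictorial argument, but the underlying mechanism is identical.
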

\begin{proof}
    Let $\ell\in S_D$. Let $\ell^{\ast}$ be the path from $(0,0)$ to $(2d+2, 2n)$ obtained by extending $\ell$ periodically. 
    Now $r^i(\ell)$ is obtained by taking the subpath of $\ell^{\ast}$ from $D=(x,y)$ to $E=(x+d+1, y+n)$ and translating $(x,y)$ to $(0,0)$, see Figure \ref{fig:ell_r_i_ell} for an illustration.  
    Thus $r^i(\ell)\in R_{D'}$.
    This provides a bijective correspondence.
    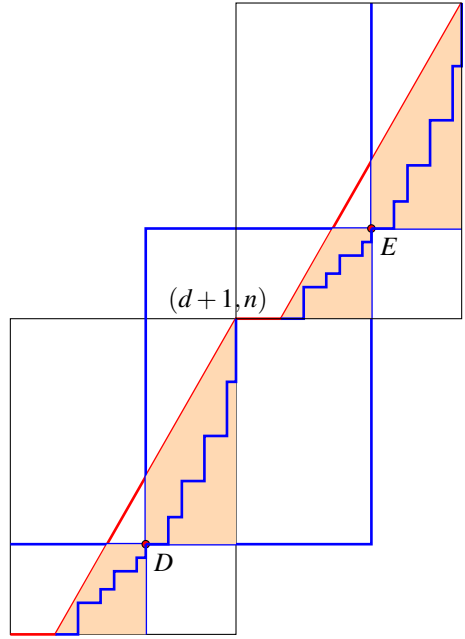
\begin{figure}[h]
        \centering
        \begin{tikzpicture}[scale=0.6]
            \draw[] (0,0) rectangle +(5,7);
            \draw[color=red, line width=1] (0,0)--(1,0)--(5,7)--(6,7)--(10,14);
            \draw[color=blue, line width=1] (0,2)--(8,2)--(8,9)--(3,9)--(3,0);
            \draw[color=blue, line width=1] (8,9)--(10,9);
            \draw[color=blue, line width=1] (8,9)--(8,14);
            \fill[orange!30] (1,0)--(3,0)-- (3,2)--(2.14,2)--cycle;
            \fill[orange!30] (3,2)--(5,2)--(5,7)--(3,3.5)--cycle;
            \fill[orange!30] (6,7)--(8,7)--(8,9)--(7.14,9)--cycle;
            \fill[orange!30] (8,9)--(10,9)--(10,14)--(8,10.5)--cycle;
            \draw (3,2) node [scale=0.3, circle, draw,fill=red]{};
            \draw (8,9) node [scale=0.3, circle, draw,fill=red]{};
            \draw[color=blue, line width=1] (1,0)--(1.5,0)--(1.5,0.7)--(2,0.7)--(2,1)--(2.3,1)--(2.3,1.4)--(2.8,1.4)--(2.8,1.7)--(3,1.7)--(3,2);
            \draw[color=blue, line width=1] (3,2)--(3.5,2)--(3.5,2.6)--(3.8,2.6)--(3.8,3.4)--(4.3,3.4)--(4.3,4.4)--(4.8,4.4)--(4.8,5.6)--(5,5.6)--(5,7);
            \draw[color=blue, line width=1] (6,7)--(6.5,7)--(6.5,7.7)--(7,7.7)--(7,8)--(7.3,8)--(7.3,8.4)--(7.8,8.4)--(7.8,8.7)--(8,8.7)--(8,9);
            \draw[color=blue, line width=1] (8,9)--(8.5,9)--(8.5,9.6)--(8.8,9.6)--(8.8,10.4)--(9.3,10.4)--(9.3,11.4)--(9.8,11.4)--(9.8,12.6)--(10,12.6)--(10,14);
            \node[] at (3.4, 1.6) {$D$};
            \node[] at (8.4, 8.6) {$E$};
            \node[] at (4.6, 7.4) {$(d+1,n)$};
            \draw[] (5,7) rectangle+(5,7); 
        \end{tikzpicture}
        \caption{$\ell$ and $r^i(\ell)$}
        \label{fig:ell_r_i_ell}
    \end{figure}
\end{proof}

The following observation will be useful later.

\begin{Prop}\label{prop_existence_2*1_square}
    Let $\ell, \ell'\in \Dyck_{d,n}$. Then 
    $\bar{\ell}Rr^i(\bar{\ell'})$ doesn't hold for $2\leq i\leq n+d$.
\end{Prop}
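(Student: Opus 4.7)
The strategy is to exhibit an explicit horizontal $(2\times 1)$-rectangle $R^{\ast}$ contained in $Y_{r^i(\bar{\ell'})}\setminus Y_{\bar{\ell}}$; this suffices because the relation $\bar{\ell}\,R\,r^i(\bar{\ell'})$ fails whenever such a rectangle exists (either $\bar{\ell}\not\leq r^i(\bar{\ell'})$ already, or else the rectangle sits inside the skew shape). First let $D=(x_0,y_0)\in\Delta_i$ be the unique point through which $\bar{\ell'}$ passes. By Lemma~\ref{lem_bijection_S_D_R_D'} we have $r^i(\bar{\ell'})\in R_{D'}$ with $D'=(d+1-x_0,n-y_0)$, so $r^i(\bar{\ell'})\geq \ell_{D'}$ and hence
\[
Y_{r^i(\bar{\ell'})}\;\supseteq\;Y_{\ell_{D'}}\;=\;\bigl\{(a,b):d+1-x_0\leq a\leq d,\ 0\leq b\leq n-y_0-1\bigr\}.
\]
Observe that $i\geq 2$ forces $x_0\geq 2$, because any $(x_0,y_0)\in\Delta$ with $x_0\leq 1$ lies on the horizontal segment of $\delta_{(0,0)}$, so $y_0=0$ and $i\leq 1$.

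I propose for $R^{\ast}$ the horizontal pair in the upper-left corner of $Y_{\ell_{D'}}$,
\[
R^{\ast}\;=\;\bigl\{(d+1-x_0,\,n-y_0-1),\;(d+2-x_0,\,n-y_0-1)\bigr\},
\]
which is contained in $Y_{\ell_{D'}}$ once $x_0\geq 2$. To show $R^{\ast}\cap Y_{\bar{\ell}}=\emptyset$, I use that the height of $\bar{\ell}$ at a column $a\geq 1$ equals $c_a(\ell)-a$, bounded above by $\lfloor n(a-1)/d\rfloor$ by the Dyck condition on $\ell$. Applied to the larger column $a=d+2-x_0$, this gives a uniform upper bound of $n-\lceil n(x_0-1)/d\rceil$; since the height at the smaller column $d+1-x_0$ is no larger, both cells of $R^{\ast}$ are strictly above $\bar{\ell}$ precisely when
\[
n-\lceil n(x_0-1)/d\rceil\;\leq\;n-y_0-1,\qquad\text{equivalently}\qquad n(x_0-1)\;>\;d\,y_0.
\]

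The main step, and the one I would verify most carefully, is the strict inequality $n(x_0-1)>dy_0$ for all $i\in\{2,\ldots,n+d\}$. Membership $(x_0,y_0)\in\Delta$ supplies only the weak inequality $n(x_0-1)\geq dy_0$, so I must rule out equality. But $\gcd(n,d)=1$ forces $n(x_0-1)=dy_0$ to imply $(x_0,y_0)=(1+dk,\,nk)$ for some integer $k\geq 0$, giving $i=1+(n+d)k$. This equals $1$ for $k=0$, gives the excluded point $(d+1,n)\notin\Delta$ for $k=1$, and lies outside the $(d+1)\times n$ rectangle for $k\geq 2$. Hence no such $i$ lies in $\{2,\ldots,n+d\}$, the strict inequality holds, and $R^{\ast}\subseteq Y_{r^i(\bar{\ell'})}\setminus Y_{\bar{\ell}}$, completing the proof. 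The boundary case $x_0=d+1$ is a small degeneration where the two columns of $R^\ast$ are $0$ and $1$ and both heights of $\bar{\ell}$ are trivially zero; the argument carries through unchanged.
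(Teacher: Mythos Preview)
Your proof is correct and follows essentially the same approach as the paper. Both place the identical $(2\times 1)$-rectangle with top-left corner $D'=(d+1-x_0,\,n-y_0)$: the paper states this geometrically via Figure~\ref{fig:exists_2_times_1_rectangle} and the phrase ``since $\bar{\ell}$ lies below $\delta_{(0,0)}$,'' while you carry out the height computation explicitly and verify the strict inequality $n(x_0-1)>dy_0$ from $\gcd(n,d)=1$. Your version is more detailed, but the underlying idea and the rectangle are the same.
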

\begin{proof}
It suffices to consider the case $\bar{\ell} \leq r^i(\bar{\ell'})$.
We have $\bar{\ell'}\in S_D$ for some $D\in \Delta_i$.
So by Lemma \ref{lem_bijection_S_D_R_D'},  $r^i(\bar{\ell'})\in R_{D'}$ for the corresponding $D'\in \Delta'_i$.
In other words, $r^i(\bar{\ell'})$ passes through $D'$ which is above $\delta_{(d,n)}$. 
As shown in Figure \ref{fig:exists_2_times_1_rectangle}, we  put a $(2\times 1)$-rectangle with $D'$ as the top left corner. 
Since $\bar{\ell}$ lies below $\delta_{(0,0)}$, the rectangle fits in 
$Y_{r^i(\bar{\ell'})} - Y_{\bar{\ell}}$.
So $\bar{\ell}Rr^i(\bar{\ell'})$ doesn't hold.

\begin{figure}[h]
\begin{center}
\begin{tikzpicture}[scale=0.6]
\draw[] (0,0) rectangle +(5,7);
\coordinate (prev) at (0,0);
\draw [color=green, line width=1] (0,0)--(4,7)--(5,7);
\draw [color=orange, line width=1] (0,0)--(1,0)--(5,7);
\draw [color=red, line width=1] (0, 1.5)--(2,5)--(3,5)--(4.14,7);
\draw [color=blue, line width=1] (0,0)--(2,0)--(2,5)--(5,5)--(5,7);
\draw [color=blue, line width=1] (2,4)--(4,4)--(4,5);
\node [] at (2, 5.4) {$D'$};
\node [] at (1.3, 2.3) {$\delta_{(d,n)}$};
\node [] at (2.8, 2.5) {$\delta_{(0,0)}$};
\node [] at (4, 3.6) {$E$};
\end{tikzpicture}
\end{center}
    \caption{the $(2\times 1)$-rectangle}
    \label{fig:exists_2_times_1_rectangle}
\end{figure}
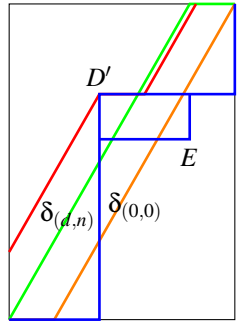

\end{proof}

\subsection{The tilting complex}
In this section, we give the definition of the basic projective module as announced before. 
We introduce a complex in $\mathcal{U}$ and prove that it is tilting.

\begin{Def}\label{def_the_proj_module}
    Let $P = \bigoplus_{\ell\in \Dyck_{d,n}} P_{\ell}$.
\end{Def}

\begin{Prop}\label{prop_nu_orb_P}
    For $1\leq i \leq n+d$, we have 
    \begin{equation*}
        \nu^i P = \bigoplus_{(x,y)\in \Delta'_i}\bigoplus_{\ell\in R_{(x, y)}} M_{\ell}[dy].
    \end{equation*}
\end{Prop}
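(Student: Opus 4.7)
The plan is to iterate the rotation formula of Proposition \ref{prop_nu_orbit_M} on each summand of $P$ and then to regroup the resulting summands using the stratification of $R_{(0,0)}$ provided by Lemma \ref{lem_bijection_S_D_R_D'}. I expect the argument to be essentially combinatorial bookkeeping once these two ingredients are put together; no further homological input beyond them should be needed.

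First, using the equivalence $\ell\in\Dyck_{d,n}\iff\bar\ell\in R_{(0,0)}$ together with Proposition \ref{prop_M_path}(ii), I would rewrite
\[
P=\bigoplus_{\bar\ell\in R_{(0,0)}}M_{\bar\ell}.
\]
Then a short induction on $i$, applying Proposition \ref{prop_nu_orbit_M} at each step, would give
\[
\nu^i(M_{\bar\ell})=M_{r^i(\bar\ell)}\bigl[\,d\cdot v_i(\bar\ell)\,\bigr],
\]
where $v_i(\bar\ell)$ denotes the number of vertical steps among the first $i$ steps of $\bar\ell$. This works because the first step of $r^j(\bar\ell)$ coincides with the $(j+1)$-st step of $\bar\ell$, so each application of $\nu$ contributes a degree shift of $d$ precisely when a vertical step is encountered.

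Next I would stratify $R_{(0,0)}$ according to which lattice point each path passes through after $i$ steps. Since $\bar\ell$ lies below $\delta_{(0,0)}$, this point must lie in $\Delta_i$, giving the disjoint decomposition $R_{(0,0)}=\bigsqcup_{D\in\Delta_i}S_D$. The key compatibility with the shift computation is that for $\bar\ell\in S_D$ with $D=(x_D,y_D)$, one has $v_i(\bar\ell)=y_D$, because exactly $y_D$ of the first $i$ steps of $\bar\ell$ must be vertical in order to reach $D$ from the origin. Lemma \ref{lem_bijection_S_D_R_D'} then upgrades this stratification to a bijection $r^i\colon S_D\xrightarrow{\sim}R_{D'}$ with $D'=(d+1-x_D,\,n-y_D)\in\Delta'_i$, so that as $D$ ranges over $\Delta_i$ the corresponding $D'$ ranges over all of $\Delta'_i$.

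Assembling these pieces, I would obtain
\[
\nu^i P=\bigoplus_{D\in\Delta_i}\bigoplus_{\bar\ell\in S_D}M_{r^i(\bar\ell)}\bigl[d\,y_D\bigr]=\bigoplus_{D'\in\Delta'_i}\bigoplus_{\ell'\in R_{D'}}M_{\ell'}\bigl[d\,y_D\bigr],
\]
after which reindexing the shift through the bijection $D\leftrightarrow D'$ yields the stated formula. The main obstacle is precisely this reindexing: the cumulative Nakayama shift is naturally indexed by the source coordinates $(x_D,y_D)\in\Delta_i$, while the statement is indexed by the target set $\Delta'_i$, so I will need to verify carefully that the two descriptions of the shift align under the correspondence $D\leftrightarrow D'$ of Lemma \ref{lem_bijection_S_D_R_D'}.
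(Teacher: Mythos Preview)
Your proposal is correct and follows essentially the same approach as the paper: iterate Proposition~\ref{prop_nu_orbit_M} to obtain $\nu^i M_{\bar\ell}\cong M_{r^i(\bar\ell)}[d\,y_D]$ for $\bar\ell\in S_D$, then regroup via the bijection $r^i\colon S_D\to R_{D'}$ of Lemma~\ref{lem_bijection_S_D_R_D'}. Your caution about the reindexing is well placed: the shift $[dy]$ in the displayed formula is naturally indexed by the $y$-coordinate of the \emph{source} point $D\in\Delta_i$ (equivalently, by $n-y'$ if $D'=(x',y')\in\Delta'_i$), so when you write up the final line you should make explicit which coordinate the shift refers to.
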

\begin{proof}
Let $\ell\in \Dyck_{d,n}$.
We have $\bar{\ell}\in S_D$ for some unique $D=(x,y)\in \Delta_i$ and $r^i(\bar{\ell})\in R_{D'}$ with $D'=(d+1-x, n-y)\in \Delta'_i$ by Lemma \ref{lem_bijection_S_D_R_D'}.
Since 
\begin{equation*}
    \nu^i(P_{\ell}) = \nu^iM_{\bar{\ell}} \cong  M_{r^i(\bar{\ell})}[dy]
\end{equation*}
by Proposition \ref{prop_nu_orbit_M},
we obtain the result using the bijection  $r^i:S_D\rightarrow R_{D'}$.   
\end{proof}

\begin{Prop}\label{prop_rigid_P_forall_k}
    We have 
    \begin{equation*}
        \Hom_{\mathcal{D}^b(A)}(P, \nu^iP[k]) = 0
    \end{equation*}
    for all $2\leq i\leq n+d$ and $k\in \mathbb{Z}$.
\end{Prop}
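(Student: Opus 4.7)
My plan is to reduce the vanishing to the purely combinatorial statement in Proposition \ref{prop_existence_2*1_square} via two observations: the projectivity of each summand of $P$ collapses the $k$-grading, and the $\Hom$-formula for the module category $\mathcal{M}$ translates morphisms into the poset relation $R$.

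First I would expand the Hom using the decomposition of $\nu^iP$ from Proposition \ref{prop_nu_orb_P}:
\begin{equation*}
    \Hom_{\mathcal{D}^b(A)}(P, \nu^iP[k]) \cong \bigoplus_{\ell \in \Dyck_{d,n}} \bigoplus_{(x,y)\in \Delta'_i} \bigoplus_{\ell' \in R_{(x,y)}} \Hom_{\mathcal{D}^b(A)}(P_\ell, M_{\ell'}[dy+k]).
\end{equation*}
Since $P_\ell$ is projective, $\Hom_{\mathcal{D}^b(A)}(P_\ell, M_{\ell'}[m]) = \Ext_A^m(P_\ell, M_{\ell'})$ vanishes for $m \neq 0$. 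So the only summands that could survive are those with $dy + k = 0$, and for these it suffices to prove $\Hom_A(P_\ell, M_{\ell'}) = 0$ for every $\ell \in \Dyck_{d,n}$, every $(x,y) \in \Delta'_i$, and every $\ell' \in R_{(x,y)}$.

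Next I would use $P_\ell = M_{\bar{\ell}}$ (Proposition \ref{prop_M_path}(ii)) together with the $\Hom$-formula recalled in Section 4.1:
\begin{equation*}
    \Hom_A(M_{\bar{\ell}}, M_{\ell'}) \neq 0 \iff c(\bar{\ell}) \preccurlyeq c(\ell') \iff \bar{\ell}\, R\, \ell'.
\end{equation*}
So the proposition reduces to showing that $\bar{\ell}\, R\, \ell'$ fails whenever $\ell \in \Dyck_{d,n}$, $(x,y)\in\Delta'_i$, and $\ell' \in R_{(x,y)}$.

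Finally I would invoke Lemma \ref{lem_bijection_S_D_R_D'}: with $(x,y) \in \Delta'_i$ corresponding to $D = (d+1-x, n-y) \in \Delta_i$, the map $r^i: S_D \to R_{(x,y)}$ is a bijection. Thus every $\ell' \in R_{(x,y)}$ can be written as $r^i(\bar{\ell''})$ for a unique $\ell'' \in \Dyck_{d,n}$ (since $\bar{\ell''} \in S_D \subseteq R_{(0,0)}$). Proposition \ref{prop_existence_2*1_square} now applies verbatim and gives the failure of $\bar{\ell}\, R\, r^i(\bar{\ell''})$ for $2 \leq i \leq n+d$, finishing the proof. No part of this is a genuine obstacle given the machinery already established; the content of the proposition is really the geometric $2\times 1$-rectangle argument already carried out in Proposition \ref{prop_existence_2*1_square}, and the role of this proof is mainly to bookkeep the homological degrees using projectivity.
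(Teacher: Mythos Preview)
Your proposal is correct and follows essentially the same route as the paper's proof: reduce to individual summands, use projectivity of $P_\ell$ to force the shift to zero, and then invoke Proposition~\ref{prop_existence_2*1_square} to kill the remaining $\Hom_A(M_{\bar\ell},M_{r^i(\bar{\ell'})})$. The only cosmetic difference is that the paper computes $\nu^iP_{\ell'}$ directly as $M_{r^i(\bar{\ell'})}[sd]$ for each $\ell'\in\Dyck_{d,n}$, whereas you first expand $\nu^iP$ via Proposition~\ref{prop_nu_orb_P} and then use the bijection of Lemma~\ref{lem_bijection_S_D_R_D'} to recover the same description; these are two sides of the same bookkeeping.
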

\begin{proof}
    It suffices to show that $\Hom_{\mathcal{D}^b(A)}(P_{\ell}, \nu^iP_{\ell'}[k])=0$ for all $\ell, \ell'\in \Dyck_{d,n}$.
    By Proposition \ref{prop_nu_orb_P}, write $P_{\ell} = M_{\bar{\ell}}$ and $\nu^iP_{\ell'} = M_{r^i(\bar{\ell'})}[sd]$ where $0\leq s\leq i$ is the number of vertical steps of $\ell'$ among the first $i$ steps.
    Since $P_{\ell}$ is a projective module, it is enough to consider when $\nu^iP_{\ell'}[k]\in \modn A$, which implies $k=-sd$. 
    Thus we need to show $\Hom_{\mathcal{D}^b(A)}(M_{\bar{\ell}}, M_{r^i(\bar{\ell'})}) = 0$ for $2\leq i\leq n+d$. 
    By Proposition \ref{prop_existence_2*1_square}, the statement follows.
\end{proof}

\begin{Cor}\label{cor_hom_p_to_nu_power_p}
    We have $\Hom_{\mathcal{D}^b(A)}(P, \nu^iP)=0$ for all $i\geq 2$.
\end{Cor}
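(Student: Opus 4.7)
The plan is to reduce everything to Proposition \ref{prop_rigid_P_forall_k} by invoking the fractionally Calabi-Yau property of $A = A_{n+1}^d$. By Proposition \ref{prop_A_n_d}(iv) applied to $A_{n+1}^d$ (i.e., replacing $n$ with $n+1$), the algebra $A$ is $\frac{nd}{n+d+1}$-Calabi-Yau, so $\nu^{n+d+1} \cong [nd]$ on $\mathcal{D}^b(A)$. This will let me cut any power of $\nu$ down to a controllable range.

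For $i \geq 2$, I would write $i = q(n+d+1) + r$ with $0 \leq r \leq n+d$. Then
\begin{equation*}
\Hom_{\mathcal{D}^b(A)}(P, \nu^i P) \cong \Hom_{\mathcal{D}^b(A)}(P, \nu^r P[qnd]).
\end{equation*}
If $q = 0$, then $r = i$ satisfies $2 \leq r \leq n+d$, and Proposition \ref{prop_rigid_P_forall_k} with $k = 0$ gives the vanishing. If $q \geq 1$ and $2 \leq r \leq n+d$, the same proposition with $k = qnd$ again gives the vanishing.

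The remaining cases are $q \geq 1$ with $r \in \{0, 1\}$, which are not directly covered by Proposition \ref{prop_rigid_P_forall_k}. However, these are easy: $\nu^0 P = P$ is concentrated in degree $0$, and $\nu P$ is also concentrated in degree $0$ because every $\ell \in \Dyck_{d,n}$ starts with a horizontal step (since it cannot cross the diagonal at the origin), hence $\bar{\ell}$ does too, so Proposition \ref{prop_nu_orbit_M} gives $\nu M_{\bar{\ell}} = M_{r(\bar{\ell})}$ with no cohomological shift. Since $P$ is projective and $qnd \geq 1$, we conclude
\begin{equation*}
\Hom_{\mathcal{D}^b(A)}(P, \nu^r P[qnd]) \cong \Ext^{qnd}_A(P, \nu^r P) = 0
\end{equation*}
for $r \in \{0,1\}$.

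There is no real obstacle here: the only subtle point is making sure the two boundary residues $r=0,1$ (which fall outside the range of Proposition \ref{prop_rigid_P_forall_k}) are harmless, which follows from projectivity of $P$ together with the observation that $\nu P$ lives in degree $0$. Putting these cases together gives the vanishing for all $i \geq 2$.
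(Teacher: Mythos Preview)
Your proof is correct and follows exactly the same approach as the paper: divide $i$ by $n+d+1$, use the $\frac{nd}{n+d+1}$-Calabi--Yau property to reduce to $\Hom_{\mathcal{D}^b(A)}(P,\nu^rP[ndq])$, apply Proposition~\ref{prop_rigid_P_forall_k} when $2\le r\le n+d$, and handle $r\in\{0,1\}$ with $q\ge 1$ via projectivity of $P$ and the fact that $\nu^rP\in\modn A$. Your justification that $\nu P\in\modn A$ is slightly more detailed than the paper's (and in fact $\bar{\ell}$ always begins with a horizontal step by definition, so the Dyck condition is not even needed there).
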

\begin{proof}
    Write $i = (n+d+1)q+r$ with $q\geq 0$ and $0\leq r\leq n+d$.
    since $A$ is $\frac{nd}{n+d+1}$-CY by Proposition \ref{prop_A_n_d} (iv), 
    $\nu^{n+d+1} \cong [nd]$. 
    We have 
    \begin{equation*}
        \Hom_{\mathcal{D}^b(A)}(P, \nu^iP) \cong \Hom_{\mathcal{D}^b(A)}(P, \nu^rP[ndq]).
    \end{equation*}
    If $r\geq 2$, then $\Hom_{\mathcal{D}^b(A)}(P, \nu^rP[ndq])=0$ by Proposition \ref{prop_rigid_P_forall_k}.
    If $r\leq 1$, then $\nu^rP\in \modn A$ and $\Hom_{\mathcal{D}^b(A)}(P, \nu^rP[ndq])=0$ as $ndq>0$ and $P\in \proj A$.
\end{proof}

Now we are ready to introduce the main object of this section. Let 
\begin{equation*}
    T=\bigoplus_{i=1}^{d+n}\nu^iP \in \mathcal{U}.
\end{equation*}
The rest of the section is to show that $T$ is actually tilting.
Thus we can apply  Theorem \ref{thm_construction_minimal_dRF_alg} to $X=\nu P$.

\begin{Prop}\label{prop_tilt_rigidity}
    We have $\Hom(T, T[k]) = 0$ for all $k\neq 0$.
\end{Prop}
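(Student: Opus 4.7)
The plan is to reduce the computation of $\Hom(T,T[k])$ to the computation of $\Hom(P,\nu^m P[k])$ for a controlled range of $m$, and then to dispose of each range using the rigidity statement in Proposition~\ref{prop_rigid_P_forall_k} together with the fractional Calabi--Yau property of $A$ and the projectivity of $P$.

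First, since $\nu$ is an autoequivalence of $\mathcal{D}^b(A)$, for any $1\leq i,j\leq n+d$ one has
\begin{equation*}
\Hom_{\mathcal{D}^b(A)}(\nu^iP,\nu^jP[k])\cong \Hom_{\mathcal{D}^b(A)}(P,\nu^{j-i}P[k]).
\end{equation*}
Writing $m=j-i$, the exponent runs over $m\in\{-(n+d-1),\ldots,n+d-1\}$, and I will show that $\Hom(P,\nu^mP[k])=0$ for every $k\neq 0$ in each of the four ranges $m\geq 2$, $m=1$, $m=0$, and $m\leq -1$.

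The ranges $m=0$ and $m\geq 2$ are immediate: for $m=0$ the module $P$ is projective so $\Hom(P,P[k])=\Ext_A^k(P,P)=0$ for $k\neq 0$, and for $m\geq 2$ Proposition~\ref{prop_rigid_P_forall_k} gives vanishing for every $k\in\mathbb{Z}$. The range $m=1$ is the only case outside the scope of Proposition~\ref{prop_rigid_P_forall_k}, but it is handled directly: for each $\ell\in\Dyck_{d,n}$, the first step of $\bar\ell$ is horizontal by definition of $\bar{\,\cdot\,}$, so by Proposition~\ref{prop_nu_orbit_M} we get $\nu P_\ell=\nu M_{\bar\ell}=M_{r(\bar\ell)}$ with no cohomological shift, and the last step of $r(\bar\ell)$ is horizontal, so by Proposition~\ref{prop_M_path}(i) the module $M_{r(\bar\ell)}$ is injective. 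Consequently $\nu P\in\modn A$, and $\Hom(P,\nu P[k])=\Ext_A^k(P,\nu P)=0$ for $k\neq 0$ since $P$ is projective.

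Finally, for $m\leq -1$ I use that $A$ is $\frac{nd}{n+d+1}$-Calabi--Yau (Proposition~\ref{prop_A_n_d}(iv)), i.e.\ $\nu^{n+d+1}\cong [nd]$, to rewrite
\begin{equation*}
\Hom_{\mathcal{D}^b(A)}(P,\nu^mP[k])\cong \Hom_{\mathcal{D}^b(A)}(P,\nu^{m+n+d+1}P[k-nd]).
\end{equation*}
Since $m\in\{-(n+d-1),\ldots,-1\}$, the new exponent $m+n+d+1$ lies in $\{2,\ldots,n+d\}$, so Proposition~\ref{prop_rigid_P_forall_k} applies and the group vanishes for every value of $k-nd$, in particular for all $k$. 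Combining the four cases proves $\Hom(T,T[k])=0$ for $k\neq 0$. The only nontrivial step is the treatment of $m=1$, which is the gap not covered by Proposition~\ref{prop_rigid_P_forall_k}; it is settled by the observation that $\nu$ sends each summand $P_\ell$ (for $\ell$ a Dyck path) to an \emph{injective module} concentrated in degree zero, after which projectivity of $P$ finishes the argument.
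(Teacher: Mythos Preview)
Your proof is correct and follows essentially the same strategy as the paper: reduce to showing $\Hom_{\mathcal{D}^b(A)}(P,\nu^tP[k])=0$ for $0\le t\le n+d$ and $k\ne 0$, invoke Proposition~\ref{prop_rigid_P_forall_k} for $t\ge 2$, and use projectivity of $P$ together with $\nu^tP\in\modn A$ for $t=0,1$. The only difference is in how the negative exponents $m=j-i<0$ are converted to positive ones: the paper applies Serre duality, $\Hom(\nu^iP,\nu^jP[k])\cong D\Hom(P,\nu^{i-j+1}P[-k])$, whereas you use the fractional Calabi--Yau relation $\nu^{n+d+1}\cong[nd]$ to shift $m$ to $m+n+d+1\in\{2,\ldots,n+d\}$; both routes feed directly into Proposition~\ref{prop_rigid_P_forall_k} and are equally short.
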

\begin{proof}
    We have 
    \begin{align*}
        \Hom(T, T[k]) & = \bigoplus_{1\leq i, j\leq d+n} \Hom_{\mathcal{D}^b(A)}(\nu^iP, \nu^jP[k])\\
        & \cong \bigoplus_{1\leq i\leq j\leq d+n}\Hom_{\mathcal{D}^b(A)}(P, \nu^{j-i}P[k]) \oplus \bigoplus_{1\leq j<i\leq d+n}D\Hom_{\mathcal{D}^b(A)}(P, \nu^{i-j+1}P[-k])\\
        & = \bigoplus_{0\leq t\leq d+n-1}\Hom_{\mathcal{D}^b(A)}(P, \nu^{t}P[k]) \oplus \bigoplus_{2\leq t\leq d+n}D\Hom_{\mathcal{D}^b(A)}(P, \nu^{t}P[-k]).
    \end{align*}
    So it suffices to show that $\Hom_{\mathcal{D}^b(A)}(P, \nu^{t}P[k])=0$ for  $0\leq t\leq d+n$ and $k\neq 0$.
    For $t\geq 2$, it follows by Proposition \ref{prop_rigid_P_forall_k}. 
    Since $P$ is projective, 
    \begin{equation*}
        \Hom_{\mathcal{D}^b(A)}(P, M[k]) \cong \Hom_{\mathcal{K}^b(\proj A)}(P, M[k]) = 0
    \end{equation*}
    for $M\in \modn A$ and $k\neq 0$.
    So the statement holds for $t = 0, 1$.
\end{proof}

Next we introduce two functions defined on each $\ell\in L_{d+1,n}$. 
For each lattice point $D$ on $\ell$, consider the line through $D$ of slope $\frac{n}{d}$.
One of these lines is furthest to the left.
Since $\gcd(n,d)=1$, this line determines $D$ uniquely except when the line passes through $(0,0)$ and $(d,n)$ lies on $\ell$.
In that case, we pick $D=(d,n)$.
We call $D$ the anchor of $\ell$.
Now consider $\ell$ in relation to $\delta_D$. We have the following observations. See Figure \ref{fig:ell_anchor} for an illustration.
\begin{itemize}
    \item [(i)] First note $D\in (\Delta'\cup (0,0))\backslash \{(d+1,n)\}$.
    \item [(ii)] The segment of $\ell$ before $D$ lies below $\delta_D$.
    \item [(iii)] In the segment after $D$, there may be several corner points $F$ on $\ell$ that are above $\delta_D$.
    For each such $F$, we denote the horizontal and vertical distance from $F$ to $\delta_D$ by $v_F$ and $w_F$ respectively.
    By the choice of $D$, we have $0<v_F<1$ and so $0<w_F<\frac{n}{d}$.
\end{itemize}

\begin{figure}[h]
\begin{center}
\begin{tikzpicture}[scale=0.6]
\coordinate (prev) at (0,0);
\draw [color=blue, line width=1] (0,0)--(0,2)--(2,2)--(2,3)--(3,3)--(3,7)--(4,7)--(4,9)--(5,9);
\draw [color=red, line width=1] (-1.11,0)--(0,2)--(1,2)--(3,5.6)--(3.77, 7)--(4.5, 8.3)--(4.88, 9);
\draw (0,2) node [scale=0.2, circle, draw,fill=red]{};
\draw (3,7) node [scale=0.2, circle, draw,fill=red]{};
\node[] at (0, 2.4) {$D$};
\node[] at (2.7, 7.2) {$F$};
\node[] at (2.6, 6.5) {$w_F$};
\node[] at (3.5, 7.2) {$v_F$};
\end{tikzpicture}
\end{center}
    \caption{$\ell\in L_{d+1,n}$ with the anchor $D=(x_D, y_D)$}
    \label{fig:ell_anchor}
\end{figure}
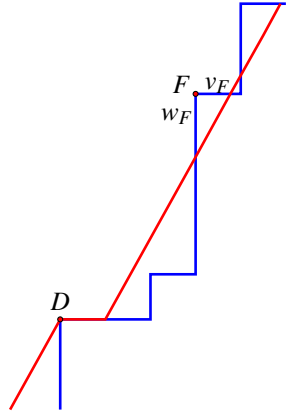

We define $h_{\ell} = y_D\in \mathbb{N}$ and
$\mu_{\ell} = \sum_F w_F^2\in \mathbb{R}$. 
Intuitively one may think that $\mu_{\ell}$ measures the area bounded by $\delta_D$ and the segment of $\ell$ which is above $\delta_D$.
Note that $D=(0,0)$ is equivalent to $h_{\ell}=0$. 
If $D\neq (0,0)$ and $\mu_{\ell}=0$, then $\ell\in R_D$ and since $D\in \Delta'_i$ for some $i$ where $1\leq i\leq n+d$, we get \begin{equation*}
    M_{\ell}\in \add\{\nu^iP[k]\mid k\in \mathbb{Z}\}
\end{equation*}
by Proposition \ref{prop_nu_orb_P}.

To simplify notation, set $\mathbb{M}=L_{d+1,n}$.
We have the following decomposition.
\begin{equation*}
    \mathbb{M} = \bigcup_{h} \mathbb{M}_h = \bigcup_{h}\bigcup_{\mu} \mathbb{M}_{h, \mu},
\end{equation*}
where $\mathbb{M}_{h, \mu} = \{\ell\in \mathbb{M}\mid h_{\ell} = h, \mu_{\ell} = \mu\}$.
Note $\mathbb{M}_n=\mathbb{M}_{n,0}$.

Set
\begin{equation*}
    \mathbb{T} = \bigcup_{h=1}^{n} \mathbb{M}_{h,0}.
\end{equation*}
By the above observations, we have 
$M_{\ell}\in \add\{T[k]\mid k\in\mathbb{Z}\}$ for all $\ell\in \mathbb{T}$.
Moreover, we denote by $\mathbb{M}_{\geq 1} = \bigcup_{h=1}^{n}\mathbb{M}_h$ and $\mathbb{M}'_{\geq 1}=\mathbb{M}_{\geq 1}\backslash \mathbb{T}$.

\begin{Lem}\label{lem_the_resolving_exact_seq}
    For each $\ell\in \mathbb{M}_{\geq 1}'$, there exists a minimal $(d+2)$-exact sequence 
    \begin{equation*}
        \xymatrix@R=1em@C=1em{0\ar[r] & M_{\ell_1}\ar[r] & \cdots\ar[r]  & M_{\ell_j}\ar[r] & M_{\ell}\ar[r] & M_{\ell_{j+1}}\ar[r] & \cdots\ar[r] & M_{\ell_{d+1}}\ar[r] & 0}
    \end{equation*}
    such that $\ell_i\in \bigcup_{h > h_{\ell}}\mathbb{M}_h \cup \bigcup_{\mu < \mu_{\ell}}\mathbb{M}_{h_{\ell}, \mu}$ for all $1\leq i\leq d+1$.
\end{Lem}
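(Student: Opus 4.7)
The plan is to construct the desired $(d+2)$-exact sequence by explicitly adjoining one extra index $c^*$ to $c(\ell)\in \os_{n+1}^{d+1}$. Write $D=(x_D,y_D)$ for the anchor of $\ell$ (so $h_\ell = y_D \geq 1$) and let $F=(x_F,y_F)$ denote the \emph{first} upper corner of $\ell$ occurring after $D$ and lying strictly above $\delta_D$; such an $F$ exists because $\mu_\ell>0$. I will take $c^* := x_F + y_F$, which is the step-position of the V-step entering $F$. Since $F$ is a V-H corner strictly after $D$, one verifies that $c^* \notin c(\ell)$ and $c_1 < c^* < c_{d+1}$, so inserting $c^*$ into $c(\ell)$ yields an extended tuple $(x_1,\ldots,x_{d+2})$ with $c^*=x_{j^*}$ for some $2\leq j^*\leq d+1$. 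Proposition~\ref{prop_A_n_d}(iv) applied to this tuple then provides a minimal $(d+2)$-exact sequence in which $M_\ell$ appears at the middle position $j^*$.

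It then remains to check that every other term $\ell_i$, $i\neq j^*$, lies in $\bigcup_{h>h_\ell}\mathbb{M}_h \cup \bigcup_{\mu<\mu_\ell}\mathbb{M}_{h_\ell,\mu}$. Such an $\ell_i$ is obtained from $\ell$ by swapping the H-step at some $c_j \in c(\ell)$ with the V-step at $c^*$. In the \emph{downward case} $c_j > c^*$ (i.e.\ $i>j^*$), the lattice points of $\ell_i$ coincide with those of $\ell$ outside the step-interval $[c^*, c_j)$ and are shifted by $(+1,-1)$ inside. Since $s_D := x_D+y_D<c^*$, the anchor $D$ is unmoved and the shifted points have strictly larger $nx-dy$, so $h_{\ell_i}=h_\ell$. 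Moreover $F$ itself lies in the shifted range and is destroyed, while every upper corner of $\ell$ in $[c^*, c_j)$ has $w<n/d$, so its shifted image drops strictly below $\delta_D$; consequently $\mu_{\ell_i}\leq \mu_\ell - w_F^2 < \mu_\ell$.

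In the \emph{upward case} $c_j<c^*$ (i.e.\ $i<j^*$), the lattice points are shifted by $(-1,+1)$ in $[c_j,c^*)$, so each value $nx-dy$ drops by $n+d$. If $j\leq x_D$, then $D$ lies in the shift range and its shifted image $(x_D-1,y_D+1)$ becomes the new anchor (using uniqueness of $D$ as anchor of $\ell$), with height $y_D+1>h_\ell$. If $j>x_D$, then $D$ is unchanged, and the crux is that the lattice point $(x_F,y_F-1)$ always lies in $[c_j,c^*)$; writing $s := nx_F-dy_F-(nx_D-dy_D)$, the fact that $F$ is strictly above the right part of $\delta_D$ forces $1\leq s \leq n-1$, so the shifted value $nx_D-dy_D+s-n$ is strictly less than $nx_D-dy_D$. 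Hence the anchor of $\ell_i$ is a shifted lattice point, and since every lattice point of $\ell$ at step $\geq s_D$ has $y\geq y_D$, its $y$-coordinate is $\geq y_D+1$, giving $h_{\ell_i}>h_\ell$. The main obstacle I anticipate is precisely this subcase $j>x_D$: one must displace $D$ as anchor although $D$ is not itself moved, and this works only because tying $c^*$ to the \emph{first} upper corner $F$ ensures that $(x_F,y_F-1)$ belongs to every upward shift range with an $nx-dy$ value small enough that subtracting $n+d$ undercuts $nx_D-dy_D$.
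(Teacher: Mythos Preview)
Your construction is essentially the paper's: both pick an upper corner $F$ of $\ell$ above $\delta_D$, insert the step-index $c^*=x_F+y_F$ into $c(\ell)$, and invoke Proposition~\ref{prop_A_n_d}(iv); your ``upward'' and ``downward'' cases correspond exactly to the paper's split $i>j$ versus $i\le j$, and your displacement-of-anchor argument via the shifted image $F'=(x_F-1,y_F)$ is the coordinate version of the paper's ``the horizontal distance from $F'$ to $\delta_D$ is $1+v>1$''. (Incidentally, the paper does not single out the \emph{first} such $F$; your argument that $E'$ lies in every upward shift range and has shifted value $nx_D-dy_D+s-n<nx_D-dy_D$ uses only $0<w_F<n/d$, so ``first'' is not essential.)

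One inaccuracy in your downward case: the bound $\mu_{\ell_i}\le \mu_\ell-w_F^2$ is not literally correct, because a \emph{new} upper corner can appear at $E'=(x_F,y_F-1)$ in $\ell_i$ (when the step $c^*-1$ of $\ell$ is vertical), and this $E'$ lies above $\delta_D$ whenever $w_F>1$. The paper addresses this by writing that ``the contribution of $w_F$ decreases to either $w_{E'}$ or $0$''. Since $w_{E'}=w_F-1$ one still gets $\mu_{\ell_i}\le \mu_\ell-w_F^2+\max(0,(w_F-1)^2)<\mu_\ell$, so your conclusion survives. You should also note that a new corner could in principle appear at step $c_j$ (when $c_j+1\in c(\ell)$); however, that point equals $G'_m+(m,0)$ for the preceding upper corner $G'_m$ of $\ell$ with $m\ge 1$, whence $w_G=w_{G'_m}-m\,n/d<0$, so it never lies above $\delta_D$.
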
\label{lem_the_snake}
\begin{proof} Assume that $\ell$ is the blue path in Figure \ref{fig:strip_to_resolve_ell}.  
We consider the minimal $(d+2)$-exact sequence given by the $(\ell_1,\ell_{d+1})$-strip in Figure \ref{fig:strip_to_resolve_ell}. 

\begin{equation*}
        \xymatrix@R=1em@C=1em{0\ar[r] & M_{\ell_1}\ar[r] & \cdots\ar[r]  & M_{\ell_{j}}\ar[r] & M_{\ell}\ar[r] & M_{\ell_{j+1}}\ar[r] & \cdots\ar[r] & M_{\ell_{d+1}}\ar[r] & 0}.
    \end{equation*}

\begin{figure}[h]
    \begin{center}
    \begin{tikzpicture}[scale=0.6]
    \coordinate (prev) at (0,0);
    \draw [color=green, line width=1] (-1,0)--(-1,3)--(1,3)--(1,4)--(2,4)--(2,7)--(3,7);
    \draw [color=green, line width=1] (3,6)--(5,6)--(5,8)--(6,8);
    \draw [color=blue, line width=1] (0,0)--(0,2)--(2,2)--(2,3)--(3,3)--(3,7)--(4,7)--(4,9)--(5,9);
    \draw [color=red, line width=1] (-1.11,0)--(0,2)--(1,2)--(3,5.6)--(3.77, 7)--(4.5, 8.3)--(4.88, 9);
    \draw (0,2) node [scale=0.3, circle, draw,fill=red]{};
    \draw (3,7) node [scale=0.3, circle, draw,fill=red]{};
    \draw (-1,3) node [scale=0.3, circle, draw,fill=red]{};
    \draw (2,7) node [scale=0.3, circle, draw,fill=red]{};
    \node[] at (0, 2.4) {$D$};
    \node[] at (3.3, 5.6) {$E$};
    \node[] at (2.7, 7.2) {$F$};
    \node[] at (4, 6.7) {$H$};
    \node[] at (2.6, 6.5) {$w$};
    \node[] at (3.5, 7.2) {$v$};
    \node[] at (-1, 3.4) {$D'$};
    \node[] at (1.7, 7.2) {$F'$};
    \node[] at (2.7, 6) {$E'$};
    \end{tikzpicture}
    \end{center}
    \caption{$\ell$ and the $(\ell_1, \ell_{d+1})$-strip}
    \label{fig:strip_to_resolve_ell}
\end{figure}
    
    Assume $1\leq i\leq j$. 
    Then the anchor of $\ell_i$ is $D$ and $\ell_i$ passes through $E'$ but not $F$.
    We claim $\mu_{\ell_i} <\mu_{\ell}$.
    Indeed, the contribution of $w_F$ decreases to either $w_{E'}$ or $0$ if $E'$ lies below $\delta_D$.
    Moreover, additional $w$-values decrease or remain the same.
    So $\ell_i\in \bigcup_{\mu<\mu_{\ell}}\mathbb{M}_{h_{\ell},\mu}$.
    
    Assume $j+1\leq i\leq d+1$. 
    If $\ell_i$ contains the segment from $D'$ to $F'$, then $D'$ is the anchor of $\ell_i$ and $h_{\ell_i} = h_{\ell}+1$.
    Otherwise, at least $\ell_i$ coincides with $\ell$ until $D$.
    None of the points lying on the segment before $D$ or $D$ itself are possible as anchors of $\ell_i$ since the line through $F'$ is further to the left. 
    Indeed this follows as the horizontal distance from $F'$ to $\delta_D$ is $1+v>1$.
    So $\ell_i\in \bigcup_{h>h_{\ell}}\mathbb{M}_h$.
\end{proof}

\begin{Prop}\label{prop_tilt_generating}
    We have $\thick\langle T \rangle = D^b(A)$.
\end{Prop}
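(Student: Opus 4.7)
The plan is to reduce the problem to showing that each indecomposable injective module $I_\ell$ lies in $\thick\langle T\rangle$, and then to prove this by induction using Lemma~\ref{lem_the_resolving_exact_seq}. Since $\gldim A = d < \infty$, every $A$-module has a finite injective coresolution, so $\mathcal{D}^b(A) = \thick\langle DA\rangle$, and it suffices to place each indecomposable summand of $DA$ in $\thick\langle T\rangle$. In terms of lattice paths, $I_\ell = M_{\tilde\ell}$ by Proposition~\ref{prop_M_path}(ii), so the task becomes showing $M_{\tilde\ell} \in \thick\langle T\rangle$ for all $\ell \in L_{d,n}$.

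The first step is to observe that $\tilde\ell \in \mathbb{M}_{\geq 1}$ for every such $\ell$. The path $\tilde\ell$ passes through both $(0,0)$ and $(d,n)$, and since $\gcd(n,d)=1$ these are the only lattice points of the rectangle on the line $y = (n/d)x$. The anchor $D$ of $\tilde\ell$ is therefore either $(d,n)$ via the tie-breaking convention (giving $h_{\tilde\ell} = n$), or a lattice point with $nx_D - dy_D < 0$, which forces $y_D \geq 1$. Either way $h_{\tilde\ell} \geq 1$.

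The second step is a double induction—outer on $h_\ell$ from $n$ downward, inner on $\mu_\ell$—to show that $M_\ell \in \thick\langle T\rangle$ for every $\ell \in \mathbb{M}_{\geq 1}$. The base case $\ell \in \mathbb{T} = \bigcup_{h=1}^{n} \mathbb{M}_{h,0}$ is the observation recorded just before the introduction of $\mathbb{T}$: such $M_\ell$ lie in $\add\{T[k] \mid k \in \mathbb{Z}\}$. For the inductive step, $\ell \in \mathbb{M}'_{\geq 1}$ has $\mu_\ell > 0$, and Lemma~\ref{lem_the_resolving_exact_seq} furnishes a minimal $(d+2)$-exact sequence whose middle term is $M_\ell$ and whose remaining terms $M_{\ell_i}$ are indexed by paths $\ell_i$ with either $h_{\ell_i} > h_\ell$ (covered by the outer inductive hypothesis) or $h_{\ell_i} = h_\ell$ and $\mu_{\ell_i} < \mu_\ell$ (covered by the inner inductive hypothesis). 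Splitting this long exact sequence at its intermediate kernels produces a chain of distinguished triangles in $\mathcal{D}^b(A)$, which places $M_\ell$ in the thick subcategory generated by the other terms.

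The main conceptual obstacle is why the argument must be phrased in terms of injectives rather than projectives. For $\ell' \in \Dyck_{d,n}$ with vertical last step, the projective $P_{\ell'} = M_{\bar{\ell'}}$ has $h_{\bar{\ell'}} = 0$, so $\bar{\ell'}$ lies in $\mathbb{M}_0$, outside the reach of Lemma~\ref{lem_the_resolving_exact_seq}; prepending a horizontal step at the origin of a rational Dyck path leaves the anchor at $(0,0)$. Working with injectives sidesteps this difficulty because appending a horizontal step always traverses $(d,n)$, forcing $h_{\tilde\ell} \geq 1$. Well-foundedness of the induction is immediate, since $h_\ell$ ranges over a finite set and $\mu_\ell$ is bounded by the geometry of the rectangle. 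Combining the three steps yields $DA \in \thick\langle T\rangle$, whence $\mathcal{D}^b(A) = \thick\langle DA\rangle \subseteq \thick\langle T\rangle$, and the reverse inclusion is immediate.
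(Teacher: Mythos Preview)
Your proof is correct and follows essentially the same route as the paper's: reduce to $DA\in\thick\langle T\rangle$, observe that each $\tilde\ell$ has $h_{\tilde\ell}\geq 1$ since it passes through $(d,n)$, and run a double induction on $(n-h_\ell,\mu_\ell)$ using Lemma~\ref{lem_the_resolving_exact_seq} for the inductive step, with the exact sequence broken into triangles to conclude $M_\ell\in\thick\langle T\rangle$. One minor point: the well-foundedness of the inner induction on $\mu_\ell$ is not just a matter of boundedness but of $\mu_\ell$ taking only finitely many values (since there are only finitely many lattice paths); you should state this explicitly.
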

\begin{proof}
    Firstly we claim that given an exact complex
    \begin{equation*}
        X_{\bullet}: 
        \xymatrix@R=1em@C=1em{0\ar[r] & X_1\ar[r] & \cdots\ar[r] & X_{j-1}\ar[r] & X_j\ar[r] & X_{j+1}\ar[r] & X_{a+1}\ar[r] & 0} \in \mathcal{D}^b(A)
    \end{equation*}
    with $X_i\in \thick\langle T\rangle$ for all $i\neq j$, then $X_j\in \thick\langle T\rangle$.

    To see this, denote by $\tau_{\leq i} X_{\bullet}$ the brutal truncation at $i$. We have the canonical triangle 
    \begin{equation*}
        \xymatrix@R=1em@C=1em{\tau_{\leq j}X_{\bullet}\ar[r] & \tau_{>j}X_{\bullet}\ar[r] & X_{\bullet}\ar[r] & \tau_{\leq j}X_{\bullet}[1]}.
    \end{equation*}
    We have $\tau_{\leq j}X_{\bullet}\cong \tau_{>j}X_{\bullet}\in \thick \langle T \rangle$ since $X_{\bullet}\cong 0$.

    Making use of the following triangle 
    \begin{equation*}
        \xymatrix@R=1em@C=1em{\tau_{< j}X_{\bullet}\ar[r] & X_j\ar[r] & \tau_{\leq j}X_{\bullet}\ar[r] & \tau_{< j}X_{\bullet}[1]},
    \end{equation*}
    we conclude that $X_j\in \thick \langle T \rangle$.
    
    Now we do double induction on $n-h$ and $\mu$ to show that $M_{\ell}\in \thick\langle T \rangle$ for all $\ell\in \mathbb{M}_{h\geq 1}$.

    The base cases are clear as $\mathbb{M}_{h,0}\subseteq \mathbb{T}$ for $h\geq 1$ and $\mathbb{M}_n=\mathbb{M}_{n,0}\subseteq \mathbb{T}$.

    Assume by induction hypothesis that $ M_{\ell}\in \thick\langle T\rangle$ for all  $\ell\in \bigcup_{h'>h}\mathbb{M}_{h'}\cup\bigcup_{\mu'<\mu}\mathbb{M}_{h,\mu'}$. Take $\ell\in \mathbb{M}_{h,\mu}$.
    We use the exact sequence in Lemma \ref{lem_the_resolving_exact_seq} to conclude that $M_{\ell}\in \thick\langle T\rangle$.

    Next we show $\ell\in \mathbb{M}_{\geq 1}$ for $M_{\ell}\in \add DA$. 
    Indeed, by Proposition \ref{prop_M_path} (i), $\ell$ passes through $(d,n)$. 
    So the anchor of $\ell$ is either $(d,n)$ or further to the left, which implies $h_{\ell}\geq 1$. 

    Therefore, $DA\in \thick\langle T\rangle$ and $\thick\langle T\rangle = \mathcal{D}^b(A)$. 
\end{proof}

\textbf{Proof of Theorem 4.5.} Proposition \ref{prop_tilt_rigidity} together with Proposition \ref{prop_tilt_generating} yields that $T$ is a tilting complex. Applying Theorem \ref{thm_construction_minimal_dRF_alg}, we have  $(i)$ and $(ii)$. Since fractionally Calabi-Yau property is a derived invariant, $(iii)$ follows from Proposition \ref{prop_A_n_d} $(v)$. 

\subsection{Endomorphism algebras}

Let $B_0 = \End_{\mathcal{D}^b(A)}(P)$. 
By applying certain constructions to $B_0$, we obtain $B$ as well as its $nd$-Auslander algebra and $(nd+1)$-preprojective algebra. 

Let $r\geq 1$ be an integer.
Recall the $r$-replicated algebra (see \cite{AI87})  of an algebra $A$ is given by the following $r\times r$ matrix algebra.

\begin{equation*}
    A^{(r)} = 
    \begin{pmatrix}
        A & DA & 0 & \cdots & 0 \\
        0 & A & DA & \cdots & 0\\
        \vdots & \vdots & \ddots & \ddots & \vdots\\
        0 & 0 & \cdots & A & DA\\
        0 & 0 & \cdots & 0 & A
    \end{pmatrix}.
\end{equation*}

\begin{Prop}\label{prop_B_B0}
    We have $B\cong B_0^{(n+d)}$.
\end{Prop}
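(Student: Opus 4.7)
The strategy is to specialize the general matrix description of the endomorphism algebra of $\bigoplus_{j=0}^{a-1}\nu^j X$ obtained in Section \ref{section_construction} and then collapse it using Corollary \ref{cor_hom_p_to_nu_power_p}.

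First I would set $X := \nu P$ and $a := n+d$, so that $T = \bigoplus_{j=0}^{a-1}\nu^j X$ with $X$ basic (since $P$ is basic and $\nu$ is an autoequivalence). The general description in Section \ref{section_construction} then writes $B$ as an $a\times a$ block matrix whose $(j,k)$-entry is $B_{k-j}$ for $j\leq k$ and $DB_{j-k+1}$ for $j>k$, where $B_j := \Hom_{\mathcal{D}^b(A)}(X,\nu^j X)\cong \Hom_{\mathcal{D}^b(A)}(P,\nu^j P)$. In particular $B_0 = \End_{\mathcal{D}^b(A)}(P)$ agrees with the notation of the proposition, and Serre duality for $\nu$ yields $B_1\cong DB_0$.

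Next I would invoke Corollary \ref{cor_hom_p_to_nu_power_p}, which gives $B_j = 0$ for all $j\geq 2$. This single vanishing simultaneously kills every entry $B_{k-j}$ with $k-j\geq 2$ in the strict upper triangle beyond the first superdiagonal and every entry $DB_{j-k+1}$ (which always has $j-k+1\geq 2$) in the entire strict lower triangle. The matrix therefore collapses to one with $B_0$ along the diagonal, $DB_0$ along the first superdiagonal, and zeros elsewhere, which is precisely the $(n+d)$-replicated algebra $B_0^{(n+d)}$.

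There is no substantive obstacle: the real work has already been carried out in Corollary \ref{cor_hom_p_to_nu_power_p} (via the lattice-path combinatorics of the preceding subsections) and in the general matrix computation of Section \ref{section_construction}. The only point worth verifying in passing is that the $B_0$-bimodule identification $B_1\cong DB_0$ supplied by Serre duality is the one used when defining the replicated algebra, so that matrix multiplication on $B$ matches that of $B_0^{(n+d)}$; this is automatic from the functoriality of the Nakayama functor.
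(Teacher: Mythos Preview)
Your proposal is correct and follows essentially the same approach as the paper: recall the general matrix description of $B$ from Section~\ref{section_construction}, then apply Corollary~\ref{cor_hom_p_to_nu_power_p} to kill all $B_i$ with $i\geq 2$, collapsing the matrix to the replicated algebra $B_0^{(n+d)}$. You are more explicit about which entries vanish and about the bimodule compatibility, but the argument is the same.
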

\begin{proof}
    Recall the general form of $B$ from Section \ref{section_construction}.
In our special case, 
\begin{equation*}
    B_i = \Hom_{\mathcal{D}^b(A)}(P, \nu^iP) = 0
\end{equation*}
for $i\geq 2$ by Colollary \ref{cor_hom_p_to_nu_power_p}. 
Therefore, $B\cong B_0^{(n+d)}$.
\end{proof}

\begin{Eg}
    A concrete example of Proposition \ref{prop_B_B0} is given in Example \ref{eg_A_5_3_B}. Here $B_0\cong kQ/\langle \gamma\alpha, \delta\beta-\mu\gamma \rangle$ in which $Q$ is given below.
    \begin{equation*}
        \xymatrix@C=1em@R=1em{
        && 3\ar[dr]^{\delta}\\
        & 2\ar[ur]^{\beta}\ar[dr]_{\gamma} && 5\\
        1\ar[ur]^{\alpha} && 4\ar[ur]_{\mu}
        }
    \end{equation*}
\end{Eg}

Recall in Section \ref{section_construction} that
\begin{equation*}
    N = F\left(\bigoplus_{i=1}^{n+d+1}\nu^iP\right)
\end{equation*}
is the basic $nd$-cluster tilting module of $B$.

\begin{Prop}\label{prop_hig_aus_alg_of_B}
    Denote by $\Lambda=\End_B(N)$ the $nd$-Auslander algebra of $B$. Then $\Lambda \cong B_0^{(n+d+1)}$.
    In particular, $\gldim B_0^{(n+d+1)}\leq nd+1 \leq \domdim B_0^{(n+d+1)}$.
\end{Prop}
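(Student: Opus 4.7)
The plan is to identify the $nd$-Auslander algebra $\Lambda$ of $B$ explicitly via the matrix description of endomorphism algebras developed in Section \ref{section_construction}, and then invoke Proposition \ref{prop_aus_alg_gldim_domdim} for the homological inequality.

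First, since $B$ is $2$-subhomogeneous $nd$-representation finite by Theorem \ref{thm_main_construction_higher_auslander_alg_typeA}(ii), the basic $nd$-cluster tilting module is $N = B\oplus DB$. Following the general setup in Section \ref{section_construction} with $X = \nu P$ and $a = n+d$, we have $N\cong F\!\left(\bigoplus_{i=1}^{n+d+1}\nu^iP\right)$ and hence
\begin{equation*}
    \Lambda \cong \End_{\mathcal{D}^b(A)}\!\left(\bigoplus_{i=1}^{n+d+1}\nu^iP\right).
\end{equation*}

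Next I would compute the $(n+d+1)\times(n+d+1)$ block matrix describing $\Lambda$, whose $(i,j)$-entry is $\Hom_{\mathcal{D}^b(A)}(\nu^iP,\nu^jP)$. The key vanishing input is Corollary \ref{cor_hom_p_to_nu_power_p}, which gives $B_k := \Hom_{\mathcal{D}^b(A)}(P,\nu^kP)=0$ for all $k\geq 2$. Combining this with Serre duality yields $B_1\cong DB_0$, while for $i > j$ one has $\Hom(\nu^iP,\nu^jP)\cong DB_{i-j+1} = 0$ since $i-j+1\geq 2$, and for $j-i\geq 2$ the entry $\Hom(\nu^iP,\nu^jP) = B_{j-i}$ also vanishes. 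So $\Lambda$ is upper triangular with $B_0$ on the diagonal and $DB_0$ on the superdiagonal, which matches precisely the defining matrix of the $(n+d+1)$-replicated algebra $B_0^{(n+d+1)}$.

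Finally, the inequality is immediate: $B$ is $nd$-representation finite with $\gldim B = nd$, so Proposition \ref{prop_aus_alg_gldim_domdim} applied to $B$ yields $\gldim \Lambda \leq nd+1 \leq \domdim \Lambda$, and transferring along the isomorphism just established gives the stated bound for $B_0^{(n+d+1)}$. The only nontrivial ingredient is the vanishing from Corollary \ref{cor_hom_p_to_nu_power_p}, which has already been established; everything else is routine matrix bookkeeping, so I do not anticipate a serious obstacle.
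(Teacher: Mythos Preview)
Your proposal is correct and follows essentially the same route as the paper: invoke the general matrix form of $\Lambda$ from Section \ref{section_construction}, kill all entries except the diagonal and superdiagonal via Corollary \ref{cor_hom_p_to_nu_power_p}, and then apply Proposition \ref{prop_aus_alg_gldim_domdim}. The paper's proof is just a two-line pointer to exactly these ingredients; you have simply unpacked the matrix bookkeeping explicitly.
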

\begin{proof}
    Combining the general form in Section \ref{section_construction} of $\Lambda$ and Corollary \ref{cor_hom_p_to_nu_power_p}, 
    the statement follows.
    The second part follows from Proposition \ref{prop_aus_alg_gldim_domdim}.
\end{proof}

\begin{Eg}
    Let $A=A_5^3$. See Example \ref{eg_A_5_3_B} for the quiver of $B$. The $12$-Auslander algebra $\Lambda$ of $B$ is the path algebra of the following quiver with relations, where the quiver is given below and the relation is generated by zero relations (red dashed lines) and commutative relations (blue dashed lines). Consequently $\gldim \Lambda = 13 = \domdim \Lambda$ by Proposition \ref{prop_aus_alg_gldim_domdim}.
    \begin{equation*}
        \begin{xy}
            0;<18pt,0cm>:<8pt,24pt>::
            (0,0) *+{1} ="1",
            (0,1) *+{2} ="2",
            (0,2) *+{3} ="3",
            (0,3) *+{6} ="6",
            (1.5,0) *+{4} ="4",
            (1.5,1) *+{5} ="5",
            (1.5,2) *+{7} ="7",
            (1.5,3) *+{9} ="9",
            (3,1) *+{8} ="8",
            (3,2) *+{10} ="10",
            (4.5,0) *+{11} ="11",
            (4.5,1) *+{12} ="12",
            (4.5,2) *+{13} ="13",
            (4.5,3) *+{16} ="16",
            (6,0) *+{14} ="14",
            (6,1) *+{15} ="15",
            (6,2) *+{17} ="17",
            (6,3) *+{19} ="19",
            (7.5,1) *+{18} ="18",
            (7.5,2) *+{20} ="20",
            (9,0) *+{21} ="21",
            (9,1) *+{22} ="22",
            (9,2) *+{23} ="23",
            (9,3) *+{26} ="26",
            (10.5,0) *+{24} ="24",
            (10.5,1) *+{25} ="25",
            (10.5,2) *+{27} ="27",
            (10.5,3) *+{29} ="29",
            (12,1) *+{28} ="28",
            (12,2) *+{30} ="30",
            (13.5,0) *+{31} ="31",
            (13.5,1) *+{32} ="32",
            (13.5,2) *+{33} ="33",
            (13.5,3) *+{36} ="36",
            (15,0) *+{34} ="34",
            (15,1) *+{35} ="35",
            (15,2) *+{37} ="37",
            (15,3) *+{39} ="39",
            (16.5,1) *+{38} ="38",
            (16.5,2) *+{40} ="40",
            "1", {\ar "2"},
            "2", {\ar "3"},
            "3", {\ar "6"},
            "2", {\ar "4"},
            "3", {\ar "5"},
            "6", {\ar "7"},
            "4", {\ar "5"},
            "5", {\ar "7"},
            "7", {\ar "9"},
            "7", {\ar "8"},
            "9", {\ar "10"},
            "8", {\ar "10"},
            "8", {\ar "11"},
            "10", {\ar "12"},
            "11", {\ar "12"},
            "12", {\ar "13"},
            "13", {\ar "16"},
            "12", {\ar "14"},
            "13", {\ar "15"},
            "16", {\ar "17"},
            "14", {\ar "15"},
            "15", {\ar "17"},
            "17", {\ar "19"},
            "17", {\ar "18"},
            "18", {\ar "20"},
            "19", {\ar "20"},
            "18", {\ar "21"},
            "20", {\ar "22"},
            "21", {\ar "22"},
            "22", {\ar "23"},
            "23", {\ar "26"},
            "22", {\ar "24"},
            "23", {\ar "25"},
            "26", {\ar "27"},
            "24", {\ar "25"},
            "25", {\ar "27"},
            "27", {\ar "29"},
            "27", {\ar "28"},
            "29", {\ar "30"},
            "28", {\ar "30"},
            "28", {\ar "31"},
            "30", {\ar "32"},
            "31", {\ar "32"},
            "32", {\ar "33"},
            "32", {\ar "34"},
            "33", {\ar "35"},
            "34", {\ar "35"},
            "33", {\ar "36"},
            "35", {\ar "37"},
            "37", {\ar "39"},
            "36", {\ar "37"},
            "37", {\ar "38"},
            "39", {\ar "40"},
            "38", {\ar "40"},
            "1", {\ar@[red]@{--} "4"},
            "6", {\ar@[red]@{--} "9"},
            "11", {\ar@[red]@{--} "14"},
            "16", {\ar@[red]@{--} "19"},
            "21", {\ar@[red]@{--} "24"},
            "26", {\ar@[red]@{--} "29"},
            "31", {\ar@[red]@{--} "34"},
            "36", {\ar@[red]@{--} "39"},
            "4", {\ar@[red]@{--} "8"},
            "5", {\ar@[red]@{--} "11"},
            "9", {\ar@[red]@{--} "13"},
            "10", {\ar@[red]@{--} "16"},
            "14", {\ar@[red]@{--} "18"},
            "15", {\ar@[red]@{--} "21"},
            "19", {\ar@[red]@{--} "23"},
            "20", {\ar@[red]@{--} "26"},
            "24", {\ar@[red]@{--} "28"},
            "25", {\ar@[red]@{--} "31"},
            "29", {\ar@[red]@{--} "33"},
            "30", {\ar@[red]@{--} "36"},
            "34", {\ar@[red]@{--} "38"},
            "2", {\ar@[blue]@{--} "5"},
            "3", {\ar@[blue]@{--} "7"},
            "7", {\ar@[blue]@{--} "10"},
            "8", {\ar@[blue]@{--} "12"},
            "12", {\ar@[blue]@{--} "15"},
            "13", {\ar@[blue]@{--} "17"},
            "17", {\ar@[blue]@{--} "20"},
            "18", {\ar@[blue]@{--} "22"},
            "22", {\ar@[blue]@{--} "25"},
            "23", {\ar@[blue]@{--} "27"},
            "27", {\ar@[blue]@{--} "30"},
            "28", {\ar@[blue]@{--} "32"},
            "32", {\ar@[blue]@{--} "35"},
            "33", {\ar@[blue]@{--} "37"},
            "37", {\ar@[blue]@{--} "40"},
        \end{xy}
    \end{equation*}
\end{Eg}

Next we show that the $(nd+1)$-preprojective algebra $\Pi_{nd+1}(B)$ of $B$ is given by the $(n+d)$-fold trivial extension algebra of $B_0$. 
Recall that the $r$-fold trivial extension algebra $T_r(A)$ is given by the following $r\times r$ matrix algebra, see \cite{CDIM25} for more details.

\begin{equation*}
    T_r(A) = 
    \begin{pmatrix}
        A & DA & 0 & \cdots & 0 \\
        0 & A & DA & \cdots & 0\\
        \vdots & \vdots & \ddots & \ddots & \vdots\\
        0 & 0 & \cdots & A & DA\\
        DA & 0 & \cdots & 0 & A
    \end{pmatrix}
\end{equation*}
When $r=1$, $T_1(A)$ is the trivial extension of $A$ and we follow the classical notation and denote it by $T(A)$.

We consider $T_r(A)$ with a particular grading $T_r(A) = T_r(A)_0\oplus T_r(A)_1$ where 
\begin{equation*}
    T_r(A)_0 = 
    \begin{pmatrix}
        A & DA & 0 & \cdots & 0 \\
        0 & A & DA & \cdots & 0\\
        \vdots & \vdots & \ddots & \ddots & \vdots\\
        0 & 0 & \cdots & A & DA\\
        0 & 0 & \cdots & 0 & A
    \end{pmatrix} \text{ and }
    T_r(A)_1 = 
    \begin{pmatrix}
        0 & 0 & 0 & \cdots & 0 \\
        0 & 0 & 0 & \cdots & 0\\
        \vdots & \vdots & \ddots & \ddots & \vdots\\
        0 & 0 & \cdots & 0 & 0\\
        DA & 0 & \cdots & 0 & 0
    \end{pmatrix}.
\end{equation*}

\begin{Prop}\label{prop_hig_preproj_alg_of_B}
    We have $\Pi_{nd+1}(B) \cong T_{n+d}(B_0)$ as graded algebras.
\end{Prop}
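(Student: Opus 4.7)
The plan is to apply the identification $\Pi_{nd+1}(B)\cong\bigoplus_{i\geq 0}\Hom_{\mathcal{D}^b(B)}(B,\nu_{B,nd}^{-i}B)$ and transport every $\Hom$-space across the derived equivalence $F:\mathcal{D}^b(A)\xrightarrow{\sim}\mathcal{D}^b(B)$ of Theorem \ref{thm_main_construction_higher_auslander_alg_typeA}. Since $B$ is $\frac{nd}{n+d+1}$-Calabi-Yau, we have $\nu_B^{n+d+1}\cong[nd]$, hence $\nu_{B,nd}^{-1}=\nu_B^{-1}[nd]\cong\nu_B^{n+d}$ and so $\nu_{B,nd}^{-i}\cong\nu_B^{i(n+d)}$. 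Because $F(T)\cong B$ and $F$ intertwines Nakayama functors, expanding $T=\bigoplus_{j=1}^{n+d}\nu_A^j P$ would produce
$$\Pi_{nd+1}(B)\cong\bigoplus_{i\geq 0}\bigoplus_{j,k=1}^{n+d}\Hom_{\mathcal{D}^b(A)}\bigl(P,\nu_A^{\,k-j+i(n+d)}P\bigr).$$

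The next step is to pin down the nonzero summands. Corollary \ref{cor_hom_p_to_nu_power_p} gives $\Hom(P,\nu^mP)=0$ for $m\geq 2$, and Serre duality $\Hom(P,\nu^mP)\cong D\Hom(P,\nu^{1-m}P)$ combined with the same corollary forces vanishing for $m\leq-1$ as well; only $m\in\{0,1\}$ survives, contributing $B_0$ and $B_1\cong DB_0$ respectively. A direct case split yields: for $i=0$ the diagonal $k=j$ gives $B_0$ and the superdiagonal $k=j+1$ gives $DB_0$; for $i=1$ the only admissible pair is $(j,k)=(n+d,1)$, giving a single copy of $DB_0$; for $i\geq 2$ every summand vanishes because $k-j+i(n+d)\geq n+d+1\geq 2$. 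Assembled as an $(n+d)\times(n+d)$ matrix indexed by $(j,k)$, this reproduces exactly the matrix presentation of $T_{n+d}(B_0)$, with the $i=0$ block recovering the degree-$0$ part $B_0^{(n+d)}\cong B$ (consistent with Proposition \ref{prop_B_B0}) and the sole $i=1$ entry filling the corner $(n+d,1)$. Consequently the tensor-degree grading on $\Pi_{nd+1}(B)$ matches $T_{n+d}(B_0)=T_{n+d}(B_0)_0\oplus T_{n+d}(B_0)_1$, with the vanishing at $i\geq 2$ reflecting that the matrix product of two corner entries is forced to be zero.

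The main technical subtlety is verifying that composition of morphisms in $\mathcal{D}^b(A)$ really translates into matrix multiplication in $T_{n+d}(B_0)$, and in particular that the $B_0$-bimodule action on the corner entry $DB_0$ induced by composition agrees with the canonical bimodule in the trivial extension construction. Since $\nu_A$ is an autoequivalence, every composition $\nu_A^aP\to\nu_A^bP\to\nu_A^cP$ reduces to one of the form $P\to\nu_A^{b-a}P\to\nu_A^{c-a}P$, and by Serre duality the induced action on $B_1\cong DB_0$ is the canonical dual of the regular $B_0$-bimodule structure on $B_0$. This is a careful but essentially routine bookkeeping step, which together with the computation above establishes the claimed isomorphism of graded algebras.
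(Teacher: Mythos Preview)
Your proposal is correct and follows essentially the same route as the paper: transport $\Pi_{nd+1}(B)\cong\bigoplus_{i\geq 0}\Hom_{\mathcal{D}^b(B)}(B,\nu_{B,nd}^{-i}B)$ across the derived equivalence, use the fractional Calabi--Yau property to replace $\nu_{nd}^{-i}$ by $\nu^{i(n+d)}$, and invoke Corollary~\ref{cor_hom_p_to_nu_power_p} to kill all but the $m\in\{0,1\}$ summands. Two cosmetic differences: the paper invokes the Calabi--Yau property of $A$ (Proposition~\ref{prop_A_n_d}(iv)) rather than of $B$, and handles degree~$0$ separately via Proposition~\ref{prop_B_B0}; you treat all $i\geq 0$ uniformly and in fact go further than the paper by sketching why composition matches the matrix multiplication in $T_{n+d}(B_0)$, a point the paper leaves implicit.
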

\begin{proof}
    By Proposition \ref{prop_B_B0}, we have $\Pi_{nd+1}(B)_0 = B \cong T_{n+d}(B_0)_0$.
    Next we calculate the positive degree part of $\Pi_{nd+1}(B)$. 
    \begin{align*}
        \Pi_{nd+1}(B)_{\geq 1} & \cong \bigoplus_{i\geq 1}\Hom_{\mathcal{D}^b(B)}(B, \nu_{B,nd}^{-i}(B))\\
        & \cong \bigoplus_{i\geq 1} \Hom_{\mathcal{D}^b(A)}\left(\bigoplus_{j=1}^{n+d}\nu_A^jP, \nu_{A, nd}^{-i}\bigoplus_{k=1}^{n+d}\nu_A^kP\right)\\
        & \cong \bigoplus_{i\geq 1} \Hom_{\mathcal{D}^b(A)}\left(\bigoplus_{j=1}^{n+d}\nu_A^jP, \bigoplus_{k=1}^{n+d}\nu_A^{i(n+d)+k}P\right) \text{ by Proposition \ref{prop_A_n_d} } (iv)\\
        & \cong \bigoplus_{i\geq 1} \bigoplus_{1\leq j,k\leq n+d} \Hom_{\mathcal{D}^b(A)}(P, \nu_A^{i(n+d)+k-j}P).
    \end{align*}
    By Corollary \ref{cor_hom_p_to_nu_power_p}, 
    $\Hom_{\mathcal{D}^b(A)}(P, \nu_A^{i(n+d)+k-j}P)\neq 0$ only if $i=1$, $k=1$ and $j=n+d$.
    And in this case by Serre duality, $\Hom_{\mathcal{D}^b(A)}(P, \nu_A P)\cong DB_0$.
    Thus we have $\Pi_{nd+1}(B)_1 \cong T_{n+d}(B_0)_1$.
\end{proof}

\begin{Eg}
    Let $A=A_5^3$. See Example \ref{eg_A_5_3_B} for the quiver of $B$. The $13$-preprojective algebra of $B$ is the path algebra of the following quiver with relations, where the quiver is given below and the relations are generated by zero relations (red dashed lines) and commutative relations (blue dashed lines). 
    Note that the $5$ vertices on the bottom left corner and the $5$ vertices on the up right corner are identified.
    \begin{equation*}
        \begin{xy}
            0;<18pt,0cm>:<8pt,24pt>::
            (0,0) *+{1} ="1",
            (0,1) *+{2} ="2",
            (0,2) *+{3} ="3",
            (0,3) *+{6} ="6",
            (1.5,0) *+{4} ="4",
            (1.5,1) *+{5} ="5",
            (1.5,2) *+{7} ="7",
            (1.5,3) *+{9} ="9",
            (3,1) *+{8} ="8",
            (3,2) *+{10} ="10",
            (4.5,0) *+{11} ="11",
            (4.5,1) *+{12} ="12",
            (4.5,2) *+{13} ="13",
            (4.5,3) *+{16} ="16",
            (6,0) *+{14} ="14",
            (6,1) *+{15} ="15",
            (6,2) *+{17} ="17",
            (6,3) *+{19} ="19",
            (7.5,1) *+{18} ="18",
            (7.5,2) *+{20} ="20",
            (9,0) *+{21} ="21",
            (9,1) *+{22} ="22",
            (9,2) *+{23} ="23",
            (9,3) *+{26} ="26",
            (10.5,0) *+{24} ="24",
            (10.5,1) *+{25} ="25",
            (10.5,2) *+{27} ="27",
            (10.5,3) *+{29} ="29",
            (12,1) *+{28} ="28",
            (12,2) *+{30} ="30",
            (13.5,0) *+{31} ="31",
            (13.5,1) *+{32} ="32",
            (13.5,2) *+{33} ="33",
            (13.5,3) *+{1} ="36",
            (15,0) *+{34} ="34",
            (15,1) *+{35} ="35",
            (15,2) *+{2} ="37",
            (15,3) *+{4} ="39",
            (16.5,1) *+{3} ="38",
            (16.5,2) *+{5} ="40",
            "1", {\ar "2"},
            "2", {\ar "3"},
            "3", {\ar "6"},
            "2", {\ar "4"},
            "3", {\ar "5"},
            "6", {\ar "7"},
            "4", {\ar "5"},
            "5", {\ar "7"},
            "7", {\ar "9"},
            "7", {\ar "8"},
            "9", {\ar "10"},
            "8", {\ar "10"},
            "8", {\ar "11"},
            "10", {\ar "12"},
            "11", {\ar "12"},
            "12", {\ar "13"},
            "13", {\ar "16"},
            "12", {\ar "14"},
            "13", {\ar "15"},
            "16", {\ar "17"},
            "14", {\ar "15"},
            "15", {\ar "17"},
            "17", {\ar "19"},
            "17", {\ar "18"},
            "18", {\ar "20"},
            "19", {\ar "20"},
            "18", {\ar "21"},
            "20", {\ar "22"},
            "21", {\ar "22"},
            "22", {\ar "23"},
            "23", {\ar "26"},
            "22", {\ar "24"},
            "23", {\ar "25"},
            "26", {\ar "27"},
            "24", {\ar "25"},
            "25", {\ar "27"},
            "27", {\ar "29"},
            "27", {\ar "28"},
            "29", {\ar "30"},
            "28", {\ar "30"},
            "28", {\ar "31"},
            "30", {\ar "32"},
            "31", {\ar "32"},
            "32", {\ar "33"},
            "32", {\ar "34"},
            "33", {\ar "35"},
            "34", {\ar "35"},
            "33", {\ar "36"},
            "35", {\ar "37"},
            "37", {\ar "39"},
            "36", {\ar "37"},
            "37", {\ar "38"},
            "39", {\ar "40"},
            "38", {\ar "40"},
            "1", {\ar@[red]@{--} "4"},
            "6", {\ar@[red]@{--} "9"},
            "11", {\ar@[red]@{--} "14"},
            "16", {\ar@[red]@{--} "19"},
            "21", {\ar@[red]@{--} "24"},
            "26", {\ar@[red]@{--} "29"},
            "31", {\ar@[red]@{--} "34"},
            "36", {\ar@[red]@{--} "39"},
            "4", {\ar@[red]@{--} "8"},
            "5", {\ar@[red]@{--} "11"},
            "9", {\ar@[red]@{--} "13"},
            "10", {\ar@[red]@{--} "16"},
            "14", {\ar@[red]@{--} "18"},
            "15", {\ar@[red]@{--} "21"},
            "19", {\ar@[red]@{--} "23"},
            "20", {\ar@[red]@{--} "26"},
            "24", {\ar@[red]@{--} "28"},
            "25", {\ar@[red]@{--} "31"},
            "29", {\ar@[red]@{--} "33"},
            "30", {\ar@[red]@{--} "36"},
            "34", {\ar@[red]@{--} "38"},
            "2", {\ar@[blue]@{--} "5"},
            "3", {\ar@[blue]@{--} "7"},
            "7", {\ar@[blue]@{--} "10"},
            "8", {\ar@[blue]@{--} "12"},
            "12", {\ar@[blue]@{--} "15"},
            "13", {\ar@[blue]@{--} "17"},
            "17", {\ar@[blue]@{--} "20"},
            "18", {\ar@[blue]@{--} "22"},
            "22", {\ar@[blue]@{--} "25"},
            "23", {\ar@[blue]@{--} "27"},
            "27", {\ar@[blue]@{--} "30"},
            "28", {\ar@[blue]@{--} "32"},
            "32", {\ar@[blue]@{--} "35"},
            "33", {\ar@[blue]@{--} "37"},
            "37", {\ar@[blue]@{--} "40"},
        \end{xy}
    \end{equation*}
\end{Eg}

Recall that for an algebra $A$ with $\gldim A<\infty$, Happel gave a triangle equivalence \cite{Hap88}
\begin{equation*}
    \mathcal{D}^b(A) \cong \underline{\modn}^{\mathbb{Z}} T(A).
\end{equation*}
The uniqueness of Serre functor implies that the following diagram commutes up to isomorphism of functors.
\begin{equation*}
    \xymatrix{
        \mathcal{D}^b(A)\ar[r]^-{\sim}\ar[d]^{\nu_{A}} &  \underline{\modn}^{\mathbb{Z}}T(A)\ar[d]^{\Omega\circ (1)}\\
        \mathcal{D}^b(A)\ar[r]^-{\sim} & \underline{\modn}^{\mathbb{Z}}T(A).
        }
\end{equation*}

For an algebra $A$, denote by $A^e = A\otimes_k A^{op}$ the enveloping algebra. 
Recall that an algebra $A$ is called twisted periodic if $\Omega_{A^e}^n\cong _1A_{\phi}$ in $\modn A^e$ for some integer $n\geq 1$ and $\phi:A\rightarrow A$ is an algebra automorphism. In particular, $A$ is called periodic if $\phi$ is the identity morphism. 
We recall the following result from \cite{CDIM25}.

\begin{Prop}(\cite[Theorem 1.4, Proposition 4.4]{CDIM25})\label{prop_CDIM_results}
    Let $A$ be a finite dimensional algebra over a field $k$ such that $A/\rad A$ is a separable $k$-algebra. The following conditions are equivalent.
    \begin{itemize}
        \item [(i)] $T(A)$ is twisted periodic.
        \item [(ii)] There exist $d,r\geq 1$ such that $T_r(A)$ admits a $d$-cluster tilting module.
        \item [(iii)] $A$ has finite global dimension and is twisted fractionally Calabi-Yau.
    \end{itemize}
    Assume that $\gldim A < \infty$. Given $m,  \ell \in \mathbb{Z}$ with $\ell > 0$,  the following conditions are equivalent. 
    \begin{itemize}
        \item [(a)] The algebra $A$ is $\frac{m}{\ell}$-Calabi-Yau.
        \item [(b)] There is an isomorphism of functors $\Omega^{\ell+m}\cong (-\ell)$ on $\underline{\modn} T(A)$.
    \end{itemize}
\end{Prop}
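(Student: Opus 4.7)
The plan is to reduce both equivalences to Happel's triangle equivalence $\mathcal{D}^b(A) \simeq \underline{\modn}^{\mathbb{Z}} T(A)$ (available since $\gldim A < \infty$), under which the Nakayama functor $\nu_A$ corresponds to $\Omega \circ (1)$, as recorded in the commutative square just above the proposition, and the suspension $[1]$ corresponds to $\Omega^{-1}$ on the stable category. This dictionary, together with the fact that the grading shift $(1)$ commutes with the syzygy $\Omega$, should do most of the work.

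For $(a) \Leftrightarrow (b)$ I would apply the dictionary verbatim: an isomorphism $\nu_A^{\ell} \cong [m]$ transfers to $(\Omega \circ (1))^{\ell} \cong \Omega^{-m}$, and since $\Omega$ commutes with $(1)$ this rearranges to $\Omega^{\ell+m} \cong (-\ell)$. Both directions are immediate because Happel's functor is an equivalence of triangulated categories and preserves natural isomorphisms between autoequivalences; no further input is needed.

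For the first block of equivalences, I would proceed as follows. First, $(iii) \Rightarrow (i)$ should follow from a twisted version of the argument above: a twisted CY isomorphism $\nu_A^{\ell} \cong [m] \circ \phi^{\ast}$ corresponds to $\Omega^{\ell+m} \cong (-\ell) \circ \phi^{\ast}$ on $\underline{\modn}^{\mathbb{Z}} T(A)$, and standard results converting stable Heller periodicity into bimodule periodicity (on $T(A)^e$) produce a twisted periodicity of $T(A)$. The implication $(i) \Rightarrow (ii)$ would use this bimodule periodicity to produce a $d\mathbb{Z}$-cluster tilting subcategory in $\mathcal{D}^b(A)$ via the autoequivalence $\nu_d$, then descend its additive generator, cut off by the $\mathbb{Z}$-grading of $T(A)$, to a $d$-cluster tilting module in $\modn T_r(A)$ for a suitable $r$. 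Finally, $(ii) \Rightarrow (iii)$: the existence of a $d$-cluster tilting module in $T_r(A)$ forces $\gldim A$ to be finite by a standard Iyama-type argument, after which one reads off the twisted fractional Calabi-Yau property from the action of the Serre functor on the induced cluster tilting subcategory of $\mathcal{D}^b(A)$.

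The main obstacle is $(i) \Rightarrow (ii)$: although twisted bimodule periodicity is the right structural hypothesis, assembling it into an explicit $d$-cluster tilting module requires identifying the correct truncation $T_r(A)$, producing a compatible additive generator, and verifying the $\Ext$-vanishing conditions for the full range $1 \leq i \leq d-1$ together with functorial finiteness. Relating periodicity of $T(A)$ as an $A^e$-module to functor periodicity on $\underline{\modn}^{\mathbb{Z}} T(A)$ in the twisted setting is another delicate point, since one must track how the automorphism $\phi$ interacts simultaneously with $\Omega$ and with the grading shift.
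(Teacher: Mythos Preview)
This proposition is quoted from \cite{CDIM25} and the paper gives no proof of its own; it is invoked as a black box in the proofs of Corollary~\ref{cor_B0_repetative_r_fold_trivial_ext} and Proposition~\ref{prop_gldim_B0_fCY_B0}. So there is nothing in the present paper to compare your argument against.

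That said, your sketch of $(a)\Leftrightarrow(b)$ is correct and is exactly the standard argument: Happel's equivalence together with the identification $\nu_A\leftrightarrow \Omega\circ(1)$ and $[1]\leftrightarrow\Omega^{-1}$ immediately gives $\nu_A^{\ell}\cong[m]\iff \Omega^{\ell+m}\cong(-\ell)$. This is precisely how the paper itself uses the dictionary in the proof of Proposition~\ref{prop_gldim_B0_fCY_B0}.

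For the three-way equivalence $(i)\Leftrightarrow(ii)\Leftrightarrow(iii)$ you have only a plan, not a proof, and you are honest about this. The genuine difficulties you flag are real: passing from twisted periodicity of $T(A)$ to an explicit $d$-cluster tilting module in some $T_r(A)$ requires substantial work (in \cite{CDIM25} this goes through Darp\"o--Iyama's characterization of self-injective algebras with cluster tilting modules), and for $(ii)\Rightarrow(iii)$ one must first establish $\gldim A<\infty$, which is not as immediate as ``a standard Iyama-type argument'' suggests. Your outline is a reasonable roadmap but would need the machinery of \cite{CDIM25} to be made rigorous; since the present paper does not attempt this, there is no discrepancy to report.
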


We obtain the following Corollary by applying Proposition \ref{prop_CDIM_results} to related algebras constructed from $B_0$.

\begin{Cor}\label{cor_B0_repetative_r_fold_trivial_ext}
    Assume $\gcd(n,d) = 1$. Then the following statements hold.
    \begin{itemize}
        \item [(i)] $B_0$ has finite global dimension and is twisted fractionally Calabi-Yau.
        \item [(ii)] $B_0^{(n+d)}$ is $nd$-representation finite and $\frac{nd}{n+d+1}$-Calabi-Yau.
        \item [(iii)] $T_{n+d}(B_0)$ is self-injective. Moreover, $\underline{\modn} T_{n+d}(B_0)$ is $(nd+1)$-Calabi-Yau and admits an $(nd+1)$-cluster tilting module.
    \end{itemize}
\end{Cor}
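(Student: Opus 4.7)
The plan is to prove the three statements in the order (ii), (iii), (i), since each builds on the previous, and all three reduce to results already established in the paper via two key identifications: $B_0^{(n+d)}\cong B$ from Proposition \ref{prop_B_B0} and $T_{n+d}(B_0)\cong \Pi_{nd+1}(B)$ from Proposition \ref{prop_hig_preproj_alg_of_B}.

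First, (ii) is immediate bookkeeping. Proposition \ref{prop_B_B0} identifies $B_0^{(n+d)}$ with $B$, and Theorem \ref{thm_main_construction_higher_auslander_alg_typeA}(ii),(iii) states that $B$ is $2$-subhomogeneous $nd$-representation finite (hence in particular $nd$-representation finite) and $\frac{nd}{n+d+1}$-Calabi-Yau.

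For (iii), I would apply Proposition \ref{prop_properties_hig_preproj_alg} to the $nd$-representation finite algebra $B$ obtained in (ii). This yields that $\Pi_{nd+1}(B)$ is self-injective, that $\underline{\modn}\Pi_{nd+1}(B)$ is $(nd+1)$-Calabi-Yau, and that $\modn\Pi_{nd+1}(B)$ admits an $(nd+1)$-cluster tilting object (which, in the self-injective setting, descends to one in the stable category). Transporting these statements along the isomorphism $T_{n+d}(B_0)\cong \Pi_{nd+1}(B)$ of Proposition \ref{prop_hig_preproj_alg_of_B} gives (iii).

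Finally, for (i), I would invoke the criterion of Proposition \ref{prop_CDIM_results} for the algebra $B_0$. By (iii) the algebra $T_{n+d}(B_0)$ admits an $(nd+1)$-cluster tilting module, so condition (ii) of Proposition \ref{prop_CDIM_results} is satisfied (with $r=n+d$ and cluster tilting dimension $nd+1$). The equivalence (ii)$\Leftrightarrow$(iii) in that proposition then delivers exactly (i): $B_0$ has finite global dimension and is twisted fractionally Calabi-Yau. I do not anticipate a genuine obstacle; the entire corollary is a chain of applications of previously established structural results. The only mild technicality is the separability hypothesis $B_0/\rad B_0$ required in Proposition \ref{prop_CDIM_results}, which is automatic over a perfect field and is harmless to assume in the present setting.
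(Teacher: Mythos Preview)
Your proposal is correct and follows essentially the same route as the paper: both proofs reduce (ii) to Proposition \ref{prop_B_B0} plus Theorem \ref{thm_main_construction_higher_auslander_alg_typeA}, reduce (iii) to Proposition \ref{prop_hig_preproj_alg_of_B} plus Proposition \ref{prop_properties_hig_preproj_alg}, and reduce (i) to Proposition \ref{prop_CDIM_results} applied to the cluster tilting module from (iii). The only difference is ordering: the paper proves (iii), then (i), then (ii), whereas you go (ii), (iii), (i); but since (iii) only needs that $B$ is $nd$-representation finite --- which comes directly from Theorem \ref{thm_main_construction_higher_auslander_alg_typeA} rather than via (ii) --- the dependence you set up is harmless redundancy, not a logical difference.
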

\begin{proof}
    Proposition \ref{prop_hig_preproj_alg_of_B} together with Proposition \ref{prop_properties_hig_preproj_alg} implies (iii).
    By Proposition \ref{prop_CDIM_results}, (iii) implies (i).
    Lastly, (ii) follows from Proposition \ref{prop_B_B0} and Theorem \ref{thm_main_construction_higher_auslander_alg_typeA}.
\end{proof}

Denote by $s=\ulcorner \frac{d}{n} \urcorner$.
Now we describe $B_0$ as an idempotent subalgebra of an $(d-s-1)$-Auslander algebra of type $\mathbb{A}_{n+1}$. This will allow us to make Corollary \ref{cor_B0_repetative_r_fold_trivial_ext} (i) more precise.
Define the map 
    \begin{align*}
        \alpha: \os_{n+1}^d & \rightarrow \os_{n+1}^{d-s}\\
        (x_1, \ldots, x_{d}) & \mapsto (x_{s+1}-s, \ldots, x_{d}-s).
    \end{align*} 
We denote by $\beta=\alpha c: \Dyck_{d,n}\rightarrow \os_{n+1}^{d-s}$.
We consider the algebra $A'= A_{n+1}^{d-s}$, in which the vertex set of its quiver is given by $\os_{n+1}^{d-s}$. 
Let $e\in A'$ be the idempotent defined as $e = \sum_{\ell\in \Dyck_{n,d}}e_{\beta(\ell)}$. 
The following statement holds.

\begin{Prop} We have 
    $B_0\cong eA'e \cong A'/\langle 1-e\rangle$.
\end{Prop}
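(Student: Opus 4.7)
The plan is to deduce both isomorphisms from the combinatorial $\Hom$-description in Proposition~\ref{prop_A_n_d}, applied to both $A = A_{n+1}^{d}$ and $A' = A_{n+1}^{d-s}$. Since $P_\ell = M_{\bar\ell}$ for $\ell \in \Dyck_{d,n}$ (Proposition~\ref{prop_M_path}) and similarly $P'_{\beta(\ell)} = M'_{\overline{\beta(\ell)}}$ in $A'$, the cited result identifies $\Hom_A(P_\ell, P_{\ell'})$ as the one-dimensional space spanned by $f_{c(\bar{\ell'}),\, c(\bar\ell)}$ when $c(\bar\ell) \preccurlyeq c(\bar{\ell'})$ and zero otherwise, with an analogous description on the $A'$-side. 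Both $B_0$ and $eA'e$ thus decompose as direct sums over $\Dyck_{d,n}^{2}$ of at most one-dimensional pieces whose multiplication is governed by $f_{zy}\circ f_{yx} = f_{zx}$, and a generator-by-generator match respecting the composition rule will establish $B_0 \cong eA'e$.

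The key combinatorial input is that the Dyck bound $c_j \leq (j-1)(n+d)/d + 1$ combined with the strict increase $1 \leq c_1 < c_2 < \cdots$ forces $c_j = j$ for $1 \leq j \leq s$, since $(j-1)(n+d)/d + 1 < j+1$ is equivalent to $(j-1)n < d$, which holds precisely when $j \leq s = \lceil d/n \rceil$. Hence for every $\ell \in \Dyck_{d,n}$ one has $c(\bar\ell) = (1,2,\ldots,s+1,c_{s+1}+1,\ldots,c_d+1)$. Expanding $\preccurlyeq$ on these long tuples alongside the shorter tuples $c(\overline{\beta(\ell)}) = (1,c_{s+1}-s+1,\ldots,c_d-s+1)$, the constraints indexed by $j \leq s$ are automatically satisfied in the long version (via the equalities $c_j = c'_j = j$) while the remaining constraints for $j > s$ coincide up to the shift $c_j \mapsto c_j - s$. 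Sending $f_{c(\bar{\ell'}),\, c(\bar\ell)}$ to $f_{c(\overline{\beta(\ell')}),\, c(\overline{\beta(\ell)})}$ is therefore a $k$-algebra isomorphism $B_0 \xrightarrow{\sim} eA'e$.

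For $eA'e \cong A'/\langle 1-e\rangle$, the natural surjection $\pi\colon A' \twoheadrightarrow A'/\langle 1-e\rangle$ satisfies $\pi(e) = \pi(1)$, so $\pi|_{eA'e}$ is surjective. I would prove injectivity through the key observation that $\beta(\Dyck_{d,n}) \subset \os_{n+1}^{d-s}$ is \emph{downward closed} in the componentwise order: rewriting the Dyck bound as $y_i \leq (s+i-1)n/d + i$ (via $y_i = c_{s+i}-s$), any $z \in \os_{n+1}^{d-s}$ with $z_i \leq y_i$ for all $i$ automatically satisfies the same bound and hence lies in $\beta(\Dyck_{d,n})$. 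Given downward closure, for $x \preccurlyeq y$ in $\beta(\Dyck_{d,n})$ the staircase path $x \to (y_1,x_2,\dots) \to (y_1,y_2,x_3,\dots) \to \cdots \to y$ is a legitimate monotone path in $Q^{n+1,d-s}$ — the inequalities $y_{j-1} < x_j \leq y_j < x_{j+1}$ keep every intermediate tuple in $\os_{n+1}^{d-s}$ — and every intermediate vertex is componentwise $\leq y$, hence in $\beta(\Dyck_{d,n})$. So the generator of $e_y A' e_x$ admits a representative supported on $\beta$-vertices; it maps to a nonzero class in $A'/\langle 1-e\rangle$, and a dimension count on the at-most-one-dimensional pieces yields the isomorphism.

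The main obstacle is the combinatorial bookkeeping: expanding $\preccurlyeq$ on tuples of different lengths and verifying that the $s$ forced equalities $c_j = j$ eliminate exactly the extraneous constraints, together with checking downward closure of $\beta(\Dyck_{d,n})$. Both reductions are elementary once the value $s = \lceil d/n \rceil$ is in hand.
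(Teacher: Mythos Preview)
Your proof is correct. For $B_0 \cong eA'e$ it runs parallel to the paper's: both identify the two sides via the combinatorial $\Hom$-description, the paper phrasing this as ``$\beta$ embeds $Q_{B_0}$ as a subquiver of $Q_{A'}$ and preserves relations'' while you work directly with the basis elements $f_{yx}$ and verify that the $\preccurlyeq$-conditions match after stripping the forced prefix $c_1=\cdots=c_s=1,\ldots,s$.

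For $eA'e \cong A'/\langle 1-e\rangle$ you and the paper isolate the same key fact --- downward closure of $\beta(\Dyck_{d,n})$ in the componentwise order --- but exploit it differently. The paper observes that downward closure forces $eA'(1-e)=0$ (no arrow from a non-$\beta$ vertex can land in $\beta(\Dyck_{d,n})$), whence $A'$ is lower triangular,
\[
A'\cong\begin{pmatrix} eA'e & 0\\ (1-e)A'e & (1-e)A'(1-e)\end{pmatrix},
\]
and the quotient by $\langle 1-e\rangle$ is visibly $eA'e$. Your route via an explicit staircase path through $\beta$-vertices plus a dimension count also works, but once downward closure is in hand the triangular-matrix shortcut is cleaner and sidesteps the path construction. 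Incidentally, the paper states the stronger claim ``$x<y$ for every $x\in\beta(\Dyck_{d,n})$ and $y\notin\beta(\Dyck_{d,n})$'', which is not literally true (for $d=3$, $n=4$ one has $(2,4)\in\beta(\Dyck_{3,4})$ and $(1,5)\notin\beta(\Dyck_{3,4})$, yet they are incomparable); your downward-closure formulation is the correct hypothesis and suffices for the conclusion $eA'(1-e)=0$.
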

\begin{proof}
    Denote by $Q_{B_0}$ (respectively $Q_{A'}$) the quiver of $B_0$ (respectively $A'$).
    Let $\beta(u\rightarrow v) = (\beta(u)\rightarrow \beta(v))$. 
    Then $\beta$ identifies $Q_{B_0}$ as a subquiver of $Q_{A'}$. It can be checked that $\beta$ preserves all  relations. Thus $B_0\cong eA'e$. 

    We need to show $eA'e\cong A'/\langle 1-e \rangle$. Note that $x<y$ for $x\in \beta(\Dyck_{d,n})$ and $y\in \os_{n+1}^{d-s}\backslash \beta(\Dyck_{d,n})$. This implies that there are no arrows from $\os_{n+1}^{d-s}\backslash \beta(\Dyck_{d,n})$ to $\beta(\Dyck_{d,n})$. 
    In other words, we have $eA'(1-e) = 0$ and
    \begin{equation*}
        A'\cong 
        \begin{pmatrix}
            eA'e & 0\\
            (1-e)A'e & (1-e)A'(1-e)
        \end{pmatrix}.
    \end{equation*}
    Therefore, $eA'e \cong A'/\langle 1-e \rangle$.
\end{proof}

\begin{Prop}\label{prop_gldim_B0_fCY_B0}
    Assume $\gcd(n,d)=1$. Denote by $s = \ulcorner \frac{d}{n} \urcorner$. We have 
    \begin{itemize}
        \item [(i)] $\gldim B_0 = d-s$, and
        \item [(ii)] $B_0$ is $\frac{(n-1)(d-1)}{n+d+1}$-Calabi-Yau. 
    \end{itemize}
\end{Prop}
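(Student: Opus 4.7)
For part (i), the plan is to exploit the presentation $B_0\cong A'/\langle 1-e\rangle$ from the preceding proposition, where $A'=A_{n+1}^{d-s}$, together with the fact (used to prove that preceding proposition) that $eA'(1-e)=0$. This condition means that for every vertex $v\in \beta(\Dyck_{d,n})$ the indecomposable projective $A'$-module $P^{A'}_v$ is $e$-supported, so $P^{A'}_v$ is already a projective $B_0$-module under the surjection $A'\twoheadrightarrow B_0$. By induction on syzygies, the minimal projective $A'$-resolution of a simple $B_0$-module $S_v$ never leaves $\mathrm{add}(eA')$, so it coincides with the minimal $B_0$-projective resolution; hence $\projdim_{B_0}S_v=\projdim_{A'}S_v$ for every $v\in \beta(\Dyck_{d,n})$, giving at once $\gldim B_0\le \gldim A'=d-s$. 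For the matching lower bound I would exhibit a single Dyck path $\ell^*\in \Dyck_{d,n}$ with $\projdim_{A'}S_{\beta(\ell^*)}=d-s$; the lowest Dyck path $\ell^*=l(1,2,\ldots,d)$ is a natural candidate, with image $\beta(\ell^*)=(1,2,\ldots,d-s)\in \os_{n+1}^{d-s}$, and its projective dimension can be read off directly from the minimal $(d-s+1)$-exact sequence of Proposition \ref{prop_A_n_d}(iv).

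For part (ii), Corollary \ref{cor_B0_repetative_r_fold_trivial_ext}(i) already supplies the qualitative statement that $B_0$ is twisted fractionally Calabi--Yau, so the job is to pin down the exact fraction. I would bootstrap from the $(nd+1)$-Calabi--Yau property of $\underline{\modn}\,T_{n+d}(B_0)$ (Corollary \ref{cor_B0_repetative_r_fold_trivial_ext}(iii), via the identification $T_{n+d}(B_0)\cong \Pi_{nd+1}(B)$ of Proposition \ref{prop_hig_preproj_alg_of_B}). The main tool is Proposition \ref{prop_CDIM_results}, whose equivalence (a)$\Leftrightarrow$(b) translates the desired property $B_0$ is $\tfrac{m}{\ell}$-CY into the syzygy identity $\Omega^{\ell+m}\cong (-\ell)$ on $\underline{\modn}^{\mathbb{Z}}T(B_0)$. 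Viewing $\underline{\modn}\,T_{n+d}(B_0)$ as the $\mathbb{Z}/(n+d)$-graded quotient of $\underline{\modn}^{\mathbb{Z}}T(B_0)$, the $(nd+1)$-Calabi--Yau relation becomes $\Omega^{nd+2}\cong(-(n+d+1))\pmod{n+d}$; combined with the already-known twisted fractional CY property from Corollary \ref{cor_B0_repetative_r_fold_trivial_ext}(i), this forces $\ell=n+d+1$ and $\ell+m=nd+2$, whence $m=(n-1)(d-1)$, proving $B_0$ is $\tfrac{(n-1)(d-1)}{n+d+1}$-Calabi--Yau (and in particular confirming triviality of the twist).

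The main obstacle is the lifting step in part (ii): passing from a $\mathbb{Z}/(n+d)$-graded relation to a $\mathbb{Z}$-graded relation requires choosing an integer lift of the grading shift, and a priori there is ambiguity by multiples of $n+d$. To rule out spurious lifts, I would use independent information such as the derived invariance of the fractional Calabi--Yau property applied to the derived equivalence between $A=A_{n+1}^d$ (which is $\tfrac{(n-1)d}{n+d}$-CY by Proposition \ref{prop_A_n_d}(v)) and $B=B_0^{(n+d)}$, plus the arithmetic constraint $\gcd(n,d)=1$. A cleaner alternative, avoiding the graded stable category entirely, is to compute $\nu_{B_0}^{n+d+1}$ directly through the embedding $\mathcal{D}^b(B_0)\cong \mathcal{K}^b(\add P)\hookrightarrow \mathcal{D}^b(A)$: since $\nu_A^{n+d}\cong [(n-1)d]$, one has $\nu_A^{n+d+1}P\cong \nu_A(P)[(n-1)d]$, and it remains to show that the minimal $\add P$-resolution of the injective module $\nu_A(P)$ in $\mathcal{D}^b(A)$ realises the Serre functor $S_{\thick\langle P\rangle}$ and contributes a homological shift of $-(n-1)$, yielding $\nu_{B_0}^{n+d+1}P\cong P[(n-1)(d-1)]$. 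This reduction converts (ii) into a concrete check about projective resolutions of injectives by Dyck-path projectives, which can be verified using the combinatorial model of Proposition \ref{prop_M_path}.
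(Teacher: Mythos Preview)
Your upper-bound argument for (i) is essentially the paper's (the paper phrases it via the adjoint pair $(F,G)=(-\otimes_{eA'e}eA',\ \Hom_{A'}(eA',-))$, but the content is the same). However, your lower-bound witness is wrong: the lowest Dyck path $\ell^\ast=l(1,2,\ldots,d)$ has $\beta(\ell^\ast)=(1,2,\ldots,d-s)$, which is the unique source vertex of $Q^{n+1,d-s}$; the simple there equals $M(1,2,\ldots,d-s+1)$ and is \emph{projective}, so $\projdim_{A'}S_{\beta(\ell^\ast)}=0$, not $d-s$. (Equivalently, there is no sequence from Proposition~\ref{prop_A_n_d}(iv) available, since one would need $x_1<x_2=1$.) The paper instead takes the \emph{maximum} Dyck path $\ell_m$, whose coordinates satisfy $c(\ell_m)=(1,\ldots,s,m_{s+1},\ldots,m_d)$ with $m_{s+1}>s+1$; then $\beta(\ell_m)$ has first entry $>1$, so under the identification $A'\cong\End(M_{n+1}^{d-s-1})$ the corresponding summand $M\in\mathcal{M}_{n+1}^{d-s-1}$ is non-projective, and applying $\Hom(M_{n+1}^{d-s-1},-)$ to its $(d-s-1)$-almost split sequence yields a minimal projective resolution of $FS_{\ell_m}$ of length exactly $d-s$.

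For (ii) you have the right ingredients but not the key step that removes the lifting ambiguity you flag. The paper never passes through the \emph{ungraded} $(nd+1)$-Calabi--Yau property; it stays in the $\mathbb{Z}$-graded world throughout. The chain
\[
\underline{\modn}^{\mathbb{Z}}T(B_0)\ \simeq\ \underline{\modn}^{\mathbb{Z}}T_{n+d}(B_0)\ \simeq\ \underline{\modn}^{\mathbb{Z}}\Pi_{nd+1}(B)\ \simeq\ \underline{\modn}\,\mathcal{U}
\]
carries the shift $(-(n+d))$ to $(-1)$ and then to $\nu_{B,nd}^{\ast}$. The decisive input is \cite[Theorem~4.5]{IO13}, which gives $\mathbb{S}_{\mathcal{U}}\circ[-nd-1]_{\mathcal{U}}\cong\nu_{B,nd}^{\ast}$ on $\underline{\modn}\,\mathcal{U}$; transporting back and using $\mathbb{S}_{T(B_0)}\cong\Omega\circ(1)$ yields $\Omega^{nd+2}\cong(-(n+d+1))$ on the nose, with no $\bmod\,(n+d)$ indeterminacy to resolve. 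Your alternative via $\thick\langle P\rangle\subset\mathcal{D}^b(A)$ would require identifying the Serre functor of the subcategory, which is not the restriction of $\nu_A$ (indeed $\nu_A P\notin\thick\langle P\rangle$ in general); the paper's route sidesteps this entirely.
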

\begin{proof}   
    Consider the adjoint pair $(F, G)$ defined as follows. 
    \begin{equation*}
        \xymatrix@R=6em{
        \modn A' \ar@/_/[rr]_{G=\Hom_{A'}(eA', -)} && \modn eA'e \cong \modn B_0\ar@/_/[ll]_{F = -\otimes_{eA'e} eA'}
        }
    \end{equation*}
    Since $eA' \cong eA'e\oplus eA'(1-e) = eA'e$ as $eA'e$-modules, $F$ is exact. 
    Moreover, $F$ preserves projective modules since it admits a right adjoint $G$ which is exact. 
    We may view $F$ as an exact embedding and conclude that $\projdim FN = \projdim N$ for $N\in \modn B_0$. 
    Thus $\gldim B_0\leq \gldim A' = d-s$.
    
    We need to show that $\gldim B_0 = d-s$. For this, let $\ell_m\in \Dyck_{d,n}$ be the maximum element. We have $c(\ell_m) = (1,2,\ldots, s, m_{s+1}, \ldots, m_d)$ with $m_{s+1}>s+1$. 
    
    Denote by $S_{\ell_m}$ the simple $B_0$-module at vertex $\ell_m$. We claim that $\projdim S_{\ell_m} = d-s$ which will imply $\gldim B_0 = d-s$. 
    To see this, we show instead $\projdim FS_{\ell_m} = d-s$.
     
    Recall that $A' \cong \End_{A_{n+1}^{d-s-1}}(M_{n+1}^{d-s-1})$. Denote by $P_{\ell_m}\in \add A'$ the projective cover of $FS_{\ell_m}$. Since $\beta(\ell_m) = (m_{s+1}-s, \ldots, m_d-s)$ and $m_{s+1}-s>1$, we have that 
    \begin{equation*}
        P_{\ell_m} \cong \Hom_{A_{n+1}^{d-s-1}}(M_{n+1}^{d-s-1}, M)
    \end{equation*}
    with $M\in \add M_{n+1}^{d-s-1} \backslash \add A_{n+1}^{d-s-1}$. 
    Thus there exists the following $(d-s+1)$-exact sequence in $\add M_{n+1}^{d-s-1}$. 
    \begin{equation*}
        \xymatrix@R=1em@C=1em{0\ar[r] & \tau_{d-s-1}M\ar[r] & N_{d-s-1}\ar[r] & \cdots\ar[r] & N_1\ar[r] & M\ar[r] & 0}.
    \end{equation*}
    Applying $H:=\Hom_{A_{n+1}^{d-s-1}}(M_{n+1}^{d-s-1}, -)$ to it gives the minimal projective resolution of $FS_{\ell_m}$.
    \begin{equation*}
        \xymatrix@R=1em@C=1em{0\ar[r] & H(\tau_{d-s-1}M)\ar[r] & HN_{d-s-1}\ar[r] & \cdots\ar[r] & HN_1\ar[r] & P_{\ell_m}\ar[r] & FS_{\ell_m}\ar[r] & 0}.
    \end{equation*}
    We have that $\projdim FS_{\ell_m} = d-s$ as claimed. So (i) follows.

    We will apply Proposition \ref{prop_CDIM_results} to show (ii). So instead we calculate the periodicity of $T(B_0)$. Denote by $(1)$ the degree shift of $T(B_0)$. We have the following commutative diagram.
    \begin{equation*}
        \xymatrix{
        \underline{\modn}^{\mathbb{Z}}T(B_0)\ar[r]^{\sim}\ar[d]^{(-(n+d))} & \underline{\modn}^{\mathbb{Z}}T_{n+d}(B_0)\ar[d]^{(-1)}\\
        \underline{\modn}^{\mathbb{Z}}T(B_0)\ar[r]^{\sim} & \underline{\modn}^{\mathbb{Z}}T_{n+d}(B_0).
        }
    \end{equation*}
    By Proposition \ref{prop_hig_preproj_alg_of_B}, we have $\underline{\modn}^{\mathbb{Z}}T_{n+d}(B_0)\cong \underline{\modn}^{\mathbb{Z}}\Pi_{nd+1}(B)$.
    On the other hand, since 
    \begin{equation*}
        \mathcal{U} = \mathcal{U}_{nd}(B) = \add \{\Pi_{nd+1}(B)[ind]\mid i\in \mathbb{Z}\},
    \end{equation*}
    we have that $\proj^{\mathbb{Z}}\Pi_{nd+1}(B) \cong \mathcal{U}$. 
    Thus $\underline{\modn}^{\mathbb{Z}}\Pi_{nd+1}(B)\cong \underline{\modn}\mathcal{U}$.
    Denote by $\mathbb{S}_{\mathcal{U}}$ (respectively $[1]_{\mathcal{U}}$ ) the Serre functor (respectively the shift functor) on $\underline{\modn} \mathcal{U}$. 
    The following diagram commutes.
    \begin{equation*}
        \xymatrix{
        \underline{\modn}^{\mathbb{Z}}\Pi_{nd+1}(B)\ar[r]^-{\sim}\ar[d]^{(-1)} &  \underline{\modn}\mathcal{U}\ar[d]^{\nu_{B,nd}^{\ast}}\\
        \underline{\modn}^{\mathbb{Z}}\Pi_{nd+1}(B)\ar[r]^-{\sim} &  \underline{\modn}\mathcal{U}.
        }
    \end{equation*}
    By \cite[Theorem 4.5]{IO13},  
    \begin{equation*}
        \mathbb{S}_{\mathcal{U}}\circ [-nd-1]_{\mathcal{U}} \cong \nu_{B,nd}^{\ast}.
    \end{equation*}
    Thus 
    \begin{equation*}
        \xymatrix{
        \underline{\modn}^{\mathbb{Z}}T(B_0)\ar[r]^{\sim}\ar[d]^{(-(n+d))} & \underline{\modn}\mathcal{U}\ar[d]^{\mathbb{S}_{\mathcal{U}}\circ [-nd-1]_{\mathcal{U}}}\\
        \underline{\modn}^{\mathbb{Z}}T(B_0)\ar[r]^{\sim} & \underline{\modn}\mathcal{U}.
        }
    \end{equation*}
    This implies 
    \begin{equation*}
        (-(n+d)) \cong \mathbb{S}_{T(B_0)}\circ \Omega^{nd+1},
    \end{equation*}
    where $\mathbb{S}_{T(B_0)}$ is the Serre functor on $\underline{\modn}^{\mathbb{Z}}T(B_0)$.
    
    Meanwhile $\mathbb{S}_{T(B_0)} \cong \Omega\circ (1)$. 
    Therefore, 
    \begin{equation*}
        (-(n+d)) \cong \Omega\circ (1)\circ \Omega^{nd+1},
    \end{equation*}
    which implies $\Omega^{nd+2} \cong (-(n+d+1))$.
    By Proposition \ref{prop_CDIM_results}, we have that $B_0$ is $\frac{(n-1)(d-1)}{n+d+1}$-Calabi-Yau.
\end{proof}

\bibliographystyle{amsplain}

\providecommand{\bysame}{\leavevmode\hbox to3em{\hrulefill}\thinspace}
\renewcommand{\MRhref}[2]{%
  \href{http://www.ams.org/mathscinet-getitem?mr=#1}{#2}
}
\renewcommand\MR[1]{\relax\ifhmode\unskip\space\fi MR~\MRhref{#1}{#1}}


\end{document}